\numberwithin{equation}{section}
\newtheorem{theorem}{Theorem}[section]
\newtheorem{lemma}{Lemma}[section]
\newtheorem{proposition}{Proposition}[section]
\newtheorem{remark}{Remark}[section]
\newtheorem{definition}{Definition}[section]
\newcommand{\bu}{\mbox{\boldmath $u$}}
\newcommand{\bw}{\mbox{\boldmath $w$}}
\newcommand{\bb}{\mbox{\boldmath $b$}}
\newcommand{\by}{\mathbf{y}}
\newcommand{\bx}{\mathbf {x}}
\newcommand{\bB}{\mbox{\boldmath $B$}}
\newcommand{\bU}{\mbox{\boldmath $U$}}
\newcommand{\bV}{\mbox{\boldmath $V$}}
\newcommand{\bv}{\mbox{\boldmath $v$}}
\newcommand{\bo}{\boldsymbol{\omega}}
\newcommand{\bxi}{\boldsymbol{\xi}}
\newcommand{\ddx}{\, {\rm d} \bx}
\newcommand{\ddS}{\, {\rm d} S_{\bx}}
\newcommand{\dt}{\, {\rm d} t}
\newcommand{\dxdt}{\, {\rm d}\bx{\rm d}t}
\newcommand{\bq}{\mathbf{q}}
\newcommand{\bn}{\mathbf{n}}
\newcommand{\diver}{\mathrm{div}}
\def\charf {\mbox{{\text 1}\kern-.30em {\text l}}}
\newcommand{\norm}[1]{\big\Vert#1\big\Vert}
\begin{document}
\date{\today}
\title[\hfilneg \hfil ]
{On the Motion of a Body with a Cavity Filled with Magnetohydrodynamic Fluid }

\author[Huang]{Bingkang Huang}
\address[Bingkang Huang]{\newline School of Mathematics, Hefei University of Technology, Hefei 230009, China}
\email{bkhuang92@hotmail.com bkhuang@whu.edu.cn}

\author[M\'acha]{V\'aclav M\'acha}
 \address[V\'aclav M\'acha]{\newline Institute of Mathematics
 Czech Academy of Sciences,
 \v{Z}itn\'{a} 25,
 Czech Republic}
 \email{macha@math.cas.cz}

 \author[Ne\v{c}asov\'{a}]{\v{S}\'{a}rka Ne\v{c}asov\'{a}}

 \address[\v{S}\'{a}rka Ne\v{c}asov\'{a}]{\newline Institute of Mathematics
	 Czech Academy of Sciences,
	 \v{Z}itn\'{a} 25,
	 Czech Republic}
 \email{matus@math.cas.cz}

\thanks{\textbf{Acknowledgment.} V. M. and \v S. N. would like to thank Prof. G. P. Galdi for introducing them this type of FSI problems. 
The research of B.-K. Huang is supported by the grant from National Natural Science Foundation of China (grant No.11901148), the Fundamental Research Funds for the Central Universities (grant No.JZ2022HGTB0257). The research of V. M\'acha and \v S. Ne\v casov\'a is supported by Praemium Academiæ of \v S. Ne\v casov\' a. Moreover, \v S. N. is supported by GC22-08633J. Finally, the Institute of Mathematics, CAS is supported by RVO:67985840.}


\begin{abstract}
We study the dynamics of a coupled system, formed by a rigid body with a cavity entirely filled with magnetohydrodynamic  compressible fluid. Our aim is to derive the global existence of the unique classical solutions and weak solutions to this system. Moreover, we show the weak-strong uniqueness principle which means that a weak solution coincides with a strong solution on the time existence of a strong solution, provided they emanate from the same initial data. 
\\[2mm]
\noindent{\bf Keywords:} Magnetohydrodynamic compressible fluid, motion of the rigid body, strong solutions, weak solutions, weak-strong uniqueness.
\\[2mm]
\noindent{\bf MSC2020:} 76W05, 76N10.
\end{abstract}

\maketitle \centerline{\date}



\section{Introduction and Main Results}
We consider an insulating rigid body  $\mathcal{B}$ with a simply connected cavity $\mathcal{C}$ (both in the physical space $\mathbb R^3$) entirely filled with a  magnetohydrodynamic compressible fluid $\mathcal{F}$ and we denote  $\mathcal{S}:=\mathcal{B}\cup\mathcal{C}$. The motion of this coupled system is governed by the following system
\begin{equation}\label{mhd-y}
\left.
\begin{split}
 r_t+ \mathrm{div}_{\by} (r\bw)=0&\\
(r\bw)_t+\mathrm{div}_{\by}(r\bw\otimes \bw) =\mathrm{div}_{\by}T(\bw, p(r)) +\mathrm{div}_{\by}\left(\bB\otimes\bB-\frac{1}{2}|\bB|^2\mathbb{I}\right)&\\
\bB_t+\bw\cdot\nabla_{\by}\bB+\bB\mathrm{div}_{\by}\bw-\bB\cdot\nabla_{\by}\bw=-\nabla_\by\times(\nabla_\by\times\bB), \quad \mathrm{div}_{\by}\bB=0&
\end{split}
\right\}
\end{equation}
\hspace{9cm}$\forall (t,\by) \in \cup_{t>0}\{t\}\times\mathcal{C}(t)$,
\begin{equation}\label{boundary-y}
\left.
\begin{split}
\bw={\bar{\bw}}(t)\times(\by-\by_{c})+{\boldsymbol\eta}(t)&\\
 \quad \bB\cdot \boldsymbol{N}=0,\quad (\nabla_\by \times \bB) \times \boldsymbol{N} = 0 &
 \end{split}
 \right\}\forall (t,\by) \in \cup_{t>0}\{t\}\times\partial\mathcal{C}(t),
\end{equation}

\begin{equation}\label{com}
\left.
\begin{split}
 \frac{\mathrm{d}}{\mathrm{d}t}(\boldsymbol{J}_{c}\cdot{\bar{\bw}})=-\int_{\partial\mathcal{C}(t)}(\by-{\by}_{c})\times \left(T(\bw, p(r)) + \bB\otimes \bB - \frac 12 |\bB|^2\mathbb I\right)\cdot \boldsymbol{N}&\\
m_{\mathcal{B}}\boldsymbol{\eta}(t)=-\int_{\mathcal{C}(t)}r\bw&
\end{split}
\right\}  \forall t\in(0,\infty),
\end{equation}
where $r$, $\bw$, $\bB$ represent the density of the fluid,  the velocity, and the magnetic fields. Next, $\bar{\bw}$ is the angular velocity of $\mathcal{B}$, and $\boldsymbol{\eta}$ denotes the velocity of its center of mass $c$. Furthermore, $\by_c$ is the position of $c$, $m_{\mathcal{B}}$ and $\boldsymbol{J}_{c}$ are (respectively) the mass of the rigid body and its inertia tensor with respect to $\by_c$. Further, $\boldsymbol N$ is a unit outer normal vector on the boundary $\partial \mathcal{C}$. The stress tensor  has a form
\begin{equation*}
T(\bw, p(r)):=S(\nabla_{\by}\bw)-p(r)\mathbb{I},
\end{equation*}
with 
\begin{equation}\label{S}
\begin{split}
S(\nabla_{\by}\bw)&:=\mu\mathcal{D}_\by(\bw)+\left(\lambda-\frac{2}{3}\mu\right)\mathbb{I}\mathrm{div}_{\by}\bw,\\
\mathcal{D}_\by(\bw)&:=\nabla_{\by}\bw+\nabla_{\by}\bw^{\top},\quad \mu>0,\quad \lambda\geq 0,
\end{split}
\end{equation}
where $\mathbb I$ denotes an identity $3\times 3$ matrix.
The pressure $p(r)$ is given by 
\begin{equation*}
p(r):=ar^{\gamma}, 
\end{equation*}
where $\gamma >1$ and $a>0$ are given. We also assume that the density of $B$ is $1$.

We refer $\eqref{mhd-y}_{1}$ as the mass balance equation and $\eqref{mhd-y}_{2}$ as the conservation law of momentum. Next, $\eqref{mhd-y}_{3}$ is called the induction equation. We assume that the body is insulated and that the fluid sticks to the boundary -- this is expressed by the boundary conditions $\eqref{boundary-y}_1$. 
The movement of the body $\mathcal B$ is governed by the Newton laws and we assume there is no outer force. This is expressed by the boundary condition \eqref{com} -- we assume that the center of mass of $\mathcal S$ cannot be moved by the interior forced and it is originally in rest $\eqref{com}_2$ and the balance of angular momentum  $\eqref{com}_1$ holds.

In order to reformulate the problem in the time-independent fluid domain, we proceed with the following transformation. Let $\boldsymbol{Q}(t)$ be the family of proper orthogonal transformations
\begin{equation*}
\frac{\mathrm{d}}{\mathrm{d}t}\boldsymbol{Q}(t)=\mathbb{S}({\bar{\bw}})\cdot\boldsymbol{Q}(t), \quad \boldsymbol{Q}(0)=\mathbb I,
\end{equation*}
where
\begin{equation*}
  \mathbb{S}({\bar{\bw}}):=\left(  \begin{array}{ccc}
        0 & -\bar{\bw}^3 & \bar{\bw}^2\\
        \bar{\bw}^3 & 0 & -\bar{\bw}^1\\
        -\bar{\bw}^2 & \bar{\bw}^1 & 0\\
    \end{array}\right).
\end{equation*}

We set
\begin{equation}\label{coordinate-change}
\bx=\boldsymbol{Q}^\top(t)\cdot(\by-\by_{c}), 
\end{equation}
and we introduce new notations for the transformed quantities:
\begin{equation}\label{new-sym}
\begin{split}
&\rho(t, \bx):=r(t, \boldsymbol{Q}\cdot \bx+\by_{c}), \quad \bu(t, \bx):=\boldsymbol{Q}^\top\cdot\bw(t, \boldsymbol{Q}\cdot \bx+\by_{c}),\\
&\boldsymbol{\omega}(t):=\boldsymbol{Q}^{\top}\cdot\bar{\bw}(t), \quad \boldsymbol{\xi}(t):=\boldsymbol{Q}^{\top}\cdot\boldsymbol{\eta}(t),\\
&\boldsymbol{I}_{c}:=\boldsymbol{Q}^{\top}\cdot\boldsymbol{J}_{c}(t)\cdot\boldsymbol{Q},\quad \boldsymbol{n}:=\boldsymbol{Q}^{\top}\cdot\boldsymbol{N}(t),\quad  \bb(t, \bx):=\boldsymbol{Q}^\top\cdot\bB(t, \boldsymbol{Q}\cdot \bx+\by_{c}).
\end{split}
\end{equation}

Based on \eqref{coordinate-change} and \eqref{new-sym} we rewrite the original system \eqref{mhd-y}--\eqref{boundary-y} as follows
\begin{equation}\label{mhd-x}
\left.
\begin{split}
 \rho_t+ \mathrm{div}_{\bx} (\rho\bv)=0&\\
(\rho\bu)_t+\mathrm{div}_{\bx}(\rho\bv\otimes \bu)+\rho\boldsymbol{\omega}\times \bu+\nabla_{\bx}p(\rho) =\mathrm{div}_{\bx}S(\nabla_{\bx}\bu) +\mathrm{div}_{\bx}\left(\bb\otimes\bb-\frac{1}{2}|\bb|^2\mathbb{I}\right)&\\
\bb_t+(\bo\times \bb)-(\bo\times \bx+\bxi)\cdot\nabla_{\bx}\bb+\bu\cdot \nabla_{\bx}\bb+\bb\mathrm{div}_{\bx}\bu-\bb\cdot\nabla_{\bx}\bu=-\nabla_\bx \times(\nabla_\bx \times \bb), \quad \mathrm{div}_{\bx}\bb=0&
\end{split}
\right\}
\end{equation}
\hspace{9cm}$\forall (t,\bx) \in (0,\infty)\times \mathcal{C},$
\begin{equation*}
\left.
\begin{split}
\bu={\bo}(t)\times\bx+{\bxi}(t)&\\
\bb\cdot \bn=0,\quad (\nabla_{\bx}\times \bb)\times\bn=0&
\end{split}
\right\}\forall(t,\bx) \in (0,\infty)\times\partial\mathcal{C},
\end{equation*}
\begin{equation}\label{com-x}
\left.
\begin{split}
 \boldsymbol{I}_c\cdot \frac{\mathrm{d}}{\mathrm{d}t}\bo+\bo\times(\boldsymbol{I}_c\cdot\bo)=-\int_{\partial\mathcal{C}}\bx\times\left( T(\bu, p(\rho)) + \bb\otimes\bb - \frac 12 |\bb|^2\mathbb I\right)\cdot \boldsymbol{n}\ddx &\\
m_{\mathcal{B}}\boldsymbol{\xi}(t)=-\int_{\mathcal{C}}\rho\bu\ddx &
\end{split}
\right\}  \forall t\in(0,\infty),
\end{equation}
where 
 $\bv:=\bu-\bo\times\bx-\bxi$ denotes the relative velocity of the fluid velocity with respect to the rigid body $\mathcal{B}$.


We proceed similarly as in \cite{DGMZ-2016-ARMA,G-2002-Elsevier} to deduce the equivalent system
\begin{equation}\label{mhd-x-new}
\left.
\begin{split}
 \rho_t+ \mathrm{div}_{\bx} (\rho\bv)=0&\\
(\rho\bu)_t+\mathrm{div}_{\bx}(\rho\bv\otimes \bu)+\rho\boldsymbol{\omega}\times \bu+\nabla_{\bx}p(\rho) =\mathrm{div}_{\bx}S(\nabla_{\bx}\bu) +\mathrm{div}_{\bx}\left(\bb\otimes\bb-\frac{1}{2}|\bb|^2\mathbb{I}\right)&\\
\bb_t+(\bo\times \bb)-(\bo\times \bx+\bxi)\cdot\nabla_{\bx}\bb+\bu\cdot \nabla_{\bx}\bb+\bb\mathrm{div}_{\bx}\bu-\bb\cdot\nabla_{\bx}\bu=-\nabla_\bx \times(\nabla_\bx \times \bb), \quad \mathrm{div}_{\bx}\bb=0&
\end{split}
\right\}
\end{equation}

$\hspace{9cm}\forall (t,\bx) \in (0,\infty)\times \mathcal{C},$
\begin{equation*}
\left.
\begin{split}
\bu={\bo}(t)\times\bx+{\bxi}(t)&\\
\bb\cdot \bn=0,\quad (\nabla_\bx \times\bb)\times\bn=0&
\end{split}
\right\} \forall (t,\bx) \in (0,\infty)\times \partial\mathcal{C},
\end{equation*}

\begin{equation*}
\left.
\begin{split}
\frac{\mathrm{d}}{\mathrm{d}t}\boldsymbol{M}+\bo\times\boldsymbol{M}=0, \quad \boldsymbol{M}=\boldsymbol{I}_C\cdot\bo+\int_{\mathcal{C}}\rho\bx\times\bu\ddx&\\
m_{\mathcal{B}}\boldsymbol{\xi}(t)=-\int_{\mathcal{C}}\rho\bu\ddx&
\end{split}
\right\}  \forall t\in(0,\infty).
\end{equation*}

In this paper we consider the coupled system \eqref{mhd-x}--\eqref{com-x} with an initial condition
\begin{equation}\label{initial-con}
(\rho, \bu, \bb)|_{t=0}=(\rho_0, \bu_0, \bb_0).
\end{equation}

We formally integrate $\eqref{mhd-x}_1$ over $\mathcal{C}\times[0, t]$ to deduce that 
\begin{equation*}
\int_{\mathcal{C}}\rho(t, \bx)\ddx=\int_{\mathcal{C}}\rho_0(\bx)\ddx=m_{\mathcal{F}}, \quad   \forall t\geq 0, 
\end{equation*}
where $m_\mathcal{F}$ denotes the mass of the fluid's part.

Many researchers are committed to the mathematical analysis of fluid-structure interaction (FSI) system with the moving rigid body containing a cavity filled by the fluid. This kind of system not only plays an important role in mathematics but also in engineering. Early results on the motion of a rigid body coupled with a liquid, which is a specific problem in the FSI system, could go back to   \cite{C-1972-NASA, MR-1968-Springer, S-1954-Russian} and \cite{Z-1885-RJPCS}. There is an enormous amount of literature devoted to this coupled system. Broadly speaking, these types of FSI systems could be classified into two types: either the fluid is incompressible or the fluid is compressible. For the first case,  here we review some interesting literature. The nonlinear instability of a body coupled with the incompressible Euler equations was given in \cite{FL-1998-SIAM}. The time-periodic weak solutions to a rigid body filled with an incompressible Newtonian fluid were constructed in \cite{GMM-2016-Springer}. Moreover, the large-time behavior of weak solutions was obtained in \cite{GM-2016-Cambridge}. In \cite{DGMZ-2016-ARMA, G-2017-Springer}, the authors considered the stability  of this coupled system without external forces. The finite energy weak solutions to a deformable body in the incompressible Newtonian fluid were established in \cite{NTT-2011-AAM}. We emphasize that the weak solutions to the incompressible viscous fluid-solid system equipped with a Navier condition were constructed in \cite{MPS}.
Specifically, the solvability of the weak solution and the ultimate motion of the FSI system have been shown in \cite{GM-2018-QAM, MPS-2018-JMFM, ST-2013-PRSE}. Some significant results have been also achieved for the FSI system with compressible fluid. The existence and large-time behavior of strong solutions to a rigid body filled with a compressible Newtonian fluid was given in \cite{GMN-2019-ARMA}, and the existence of weak solutions was investigated in \cite{GMN-2020-IJNM}. The weak solution to the system governing the flow of compressible viscous fluids around a rotating obstacle was established in \cite{KNN-2014-AUFS}. The finite energy weak solutions to a deformable body in a viscous compressible fluid which lies in bounded and unbounded domains were presented in  \cite{ MN-2016-M3AS, MN-2016-PRSES}. The existence of a weak solution to a nonlinear fluid-elastic structure interaction problem with heat exchange was considered in \cite{MMNRT-2022}.
The interested reader can find more in \cite{ CAL-2017-Berlin, L-1992-SAACM, L-1993-JJIAA, RoyT} and in the references quoted there.
Let us mention that several results in the case of the motion of rigid bodies inside of the bounded domain filled by incompressible/compressible fluids were investigated by several authors.  We can refer to e.g.\cite{DEES2, GLSE, CST, GH-2014-CPAM, F4, NRRS, KrNePi2}.  

Let us also mention recent results on the uniqueness and regularity of the FSI problem in the incompressible case with one rigid body under the assumption that the velocity field belongs to the Prodi-Serrin regularity class, see \cite{M1, M2}.

 The case when a rigid body is moving in a bounded domain filled by incompressible or compressible fluids with the electromagnetic field was investigated, see \cite{BNSS,S}.
To our knowledge, no results are available on the dynamics of a rigid body with a cavity filled with compressible magnetohydrodynamic equations. Therefore the following problems are treated in this paper: {\it is it possible to obtain the global existence of the strong and weak solutions, and the weak-strong uniqueness principle to a rigid body with a cavity filled with a magnetohydrodynamic fluid?} 
We adopt the following strategies to answer the previous questions:
\begin{itemize}
	\item The global existence of the strong solution is achieved by the method of A. Valli \cite{V-1983-ASNSP} modified so that we handle also the Newton laws \eqref{com-x} and the induction equation \eqref{mhd-x}$_3$.
	\item To show the existence of the  weak solutions to system \eqref{mhd-x}--\eqref{com-x}, we introduce new artificial viscosity and pressure terms to solve an approximation system. With uniform bounds in hand, we use compactness arguments to perform limits and to construct the finite energy weak solutions.
    \item  The weak-strong uniqueness principle for Navier-Stokes (N-S) equations is widely recognized. 
    We derive this property for system \eqref{mhd-x}--\eqref{com-x} to assert that a weak solution is the same as a strong solution, provided they emanate from the same initial data. 
\end{itemize}

We give the global-in-time existence of strong solutions to $\eqref{mhd-x}$-$\eqref{com-x}$.
\begin{theorem}\label{global-existence-strong-solution}
Let $\mathcal{C}$ be of class $C^4$, and let 
	\begin{equation*}
	\begin{split}
	&\bu_0 \in W^{1,2}_{(\mathcal{S})}, \hspace{0.2cm}\bu_0|_{\mathcal{C}}\in H^2_{(\mathcal{C})}, \hspace{0.2cm}\bb_0\in H^2_{(\mathcal{C})},\\
	&\rho_0 \in H^2_{(\mathcal{C})}\cap L^\gamma_{(\mathcal{C})}, \hspace{0.2cm}\gamma>1.
	\end{split}
	\end{equation*}
	Assume, moreover, that there exists a sufficiently small positive constant $\epsilon$ such that $\mathsf{E}$ and $\mathsf{F}$ given by $\eqref{define-e}$ and $\eqref{F}$ satisfying $\mathsf{E}(0)+\mathsf{F}^{\frac{1}{2}}(0)\leq \epsilon$. Then there is a unique solution $(\rho, \bu, \bb)$ to $\eqref{mhd-x-new}$--$\eqref{initial-con}$ which satisfies 
	\begin{equation}\label{regularity}
	\begin{split}
	&\rho \in C(0,T;H^2_{(\mathcal{C})}),  \hspace{0.2cm} \rho_t\in C(0,T;H^1_{(\mathcal{C})}),\\
	&\bu \in C(0,T;W^{1,2}_{(\mathcal{S})}),  \hspace{0.2cm}\bv \in C(0,T;H^2_{(\mathcal{C})})\cap L^2(0,T;H^3_{(\mathcal{C})}),\\
	&\bu_t\in C(0,T;L^2_{(\mathcal{S})})\cap L^2(0,T;W^{1,2}_{(\mathcal{S})}),\\
	&\bb \in C(0,T;H^2_{(\mathcal{C})})\cap L^2(0,T;H^3_{(\mathcal{C})}),  \\
	&\bb_t\in C(0,T;L^2_{(\mathcal{C})})\cap L^2(0,T;H^{1}_{(\mathcal{C})}),\hspace{0.2cm}\forall T \in (0,\infty).
	\end{split}
	\end{equation}
\end{theorem}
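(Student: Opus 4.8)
The plan is to adapt A. Valli's linearization–iteration scheme for the compressible Navier--Stokes equations to the coupled system $\eqref{mhd-x-new}$--$\eqref{com-x}$, the new ingredients being the Newton balance laws $\eqref{com-x}$ and the parabolic induction equation $\eqref{mhd-x}_3$. The natural unknowns are $(\sigma,\bv,\bb,\bo,\bxi)$, where $\sigma=\rho-\bar\rho$ is the density fluctuation about the equilibrium $\bar\rho=m_{\mathcal F}/|\mathcal{C}|$ and $\bv=\bu-\bo\times\bx-\bxi$ is the relative velocity, which vanishes on $\partial\mathcal{C}$ by the boundary condition $\bu=\bo\times\bx+\bxi$. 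Rewriting $\eqref{mhd-x-new}_2$ in terms of $\bv$ turns the momentum balance into a parabolic Dirichlet problem on $\mathcal{C}$ for the Lam\'e operator $\diver S$, forced by $\nabla p(\rho)$, the terms containing $\dot\bo,\dot\bxi$, the Coriolis and convective terms, and the Lorentz force $\diver(\bb\otimes\bb-\tfrac12|\bb|^2\mathbb I)$; $\eqref{mhd-x-new}_1$ stays a transport equation for $\sigma$; using $\diver_\bx\bb=0$ the equation $\eqref{mhd-x}_3$ becomes $\bb_t-\Delta\bb=(\text{lower order})$ with the boundary conditions $\bb\cdot\bn=0$, $(\nabla_\bx\times\bb)\times\bn=0$; and $\eqref{com-x}$ provides linear ODEs for $\bo$ together with the algebraic relation $m_{\mathcal B}\bxi=-\int_{\mathcal{C}}\rho\bu\ddx$ for $\bxi$, both forced by the boundary stress.

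Next I would solve the linearized problem. Freeze $(\tilde\sigma,\tilde\bv,\tilde\bb,\tilde\bo,\tilde\bxi)$ in a small ball $\mathcal{X}_{R,T}$ of the space prescribed by $\eqref{regularity}$ and solve: (i) the linear transport equation for $\sigma$ by characteristics, which propagates $H^2_{(\mathcal{C})}$ since $\tilde\bv\in L^2(0,T;H^3_{(\mathcal{C})})\hookrightarrow L^1(0,T;W^{2,\infty}_{(\mathcal{C})})$; (ii) the linear parabolic fluid--body system for $(\bv,\bo,\bxi)$ by a Galerkin scheme in $W^{1,2}_{(\mathcal{S})}$, the boundary-stress integral in $\eqref{com-x}_1$ cancelling against the boundary term produced when a rigid test field is used in the momentum equation, exactly as in \cite{GMN-2019-ARMA}, followed by elliptic and parabolic regularity to reach the regularity of $\bv$ and $\bv_t$ in $\eqref{regularity}$; (iii) the linear parabolic induction equation for $\bb$ by a Galerkin scheme in $\{\bb\in H^1_{(\mathcal{C})}:\diver_\bx\bb=0,\ \bb\cdot\bn=0\}$, using the elliptic estimate $\|\bb\|_{H^{s+1}_{(\mathcal{C})}}\lesssim\|\nabla_\bx\times\bb\|_{H^{s}_{(\mathcal{C})}}+\|\diver_\bx\bb\|_{H^{s}_{(\mathcal{C})}}+\|\bb\|_{L^2_{(\mathcal{C})}}$ valid under these boundary conditions to bootstrap to the $H^3$-in-space, $H^1$-in-time bounds. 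This defines the iteration map $\Phi:(\tilde\sigma,\tilde\bv,\tilde\bb,\tilde\bo,\tilde\bxi)\mapsto(\sigma,\bv,\bb,\bo,\bxi)$.

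The core step is the a priori estimate, uniform in $T$, under the smallness hypothesis. Testing $\eqref{mhd-x-new}_2$ with $\bu$ and $\eqref{mhd-x}_3$ with $\bb$, using $\eqref{mhd-x-new}_1$ against a suitable function of $\rho$, and adding the Newton laws dotted with $(\bo,\bxi)$ produces the dissipative balance for the total (kinetic, potential, magnetic, rotational, translational) energy; differentiating the equations in $\bx$ and $t$, adding the elliptic estimates, and estimating the nonlinearities gives
\[
\frac{\mathrm d}{\mathrm dt}\mathsf{E}+\mathsf{D}\le C\big(\mathsf{E}^{1/2}+\mathsf{E}\big)\mathsf{D},
\]
where $\mathsf{D}$ controls the $L^2$-in-time norms in $\eqref{regularity}$; the hyperbolicity of the $\sigma$-equation forces one to carry $\sigma$ one space derivative below $\bv$ and to recover that derivative from the momentum equation read as an elliptic system at fixed time, which is Valli's device. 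Choosing $\epsilon$ so small that $C(\mathsf{E}^{1/2}+\mathsf{E})\le\tfrac12$, Gr\"onwall yields $\mathsf{E}(t)+\int_0^t\mathsf{D}\le C\mathsf{E}(0)$, hence the bound on $\mathsf{F}$. With these bounds $\Phi$ maps $\mathcal{X}_{R,T}$ into itself with $R\sim\epsilon$ for every $T$, and is a contraction in a lower-order norm (estimating the equations for the difference of two iterates), so the Banach fixed point theorem gives a unique local strong solution; the uniform-in-$T$ estimate continues it globally, and a standard difference estimate gives uniqueness in the class $\eqref{regularity}$.

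The main obstacle I anticipate is closing the highest-order a priori estimate, where three difficulties intertwine: (a) the derivative loss in the transport equation for $\rho$, which must be absorbed through the elliptic structure of the Lam\'e operator; (b) the control of $\dot\bo$ and $\dot\bxi$, which enter the momentum equation as forcing yet are themselves obtained by differentiating the Newton laws — a near-circular dependence to be untangled by testing with rigid fields and using $m_{\mathcal B}\bxi=-\int_{\mathcal{C}}\rho\bu\ddx$ to trade $\dot\bxi$ for $\partial_t\int_{\mathcal{C}}\rho\bu\ddx$; and (c) the quadratic magnetic terms $\diver(\bb\otimes\bb-\tfrac12|\bb|^2\mathbb I)$ and $\bb\,\diver_\bx\bu-\bb\cdot\nabla_\bx\bu$, handled together with the boundary conditions $\bb\cdot\bn=0$, $(\nabla_\bx\times\bb)\times\bn=0$, for which the requisite elliptic regularity of the curl--curl operator must be established. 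Each of these is feasible only because $\mathsf{E}(0)+\mathsf{F}^{1/2}(0)\le\epsilon$ keeps the solution in the small-energy regime for all time.
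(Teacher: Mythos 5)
Your overall architecture (Valli-type linearization of the three subproblems, a fixed point for local existence, then a small-energy a priori estimate closed by a continuity argument and iteration of the local result) matches the paper's. But there is a genuine gap in the step where you close the a priori estimate. You claim the inequality $\frac{\mathrm d}{\mathrm dt}\mathsf{E}+\mathsf{D}\le C(\mathsf{E}^{1/2}+\mathsf{E})\mathsf{D}$ and conclude by Gr\"onwall using only the smallness of $\mathsf{E}(0)$. This cannot be right, because the dissipation $\mathsf{D}$ produced by the viscous and resistive terms controls only $\|\bv\|_{H^3_{(\mathcal C)}}$ (plus time derivatives and $\bb$), \emph{not} $\|\bu\|_{L^2_{(\mathcal C)}}$: the rigid part $\bo_{\bu}\times\bx+\bxi_{\bu}$ of $\bu=\bv+\bo_{\bu}\times\bx+\bxi_{\bu}$ is annihilated by $S(\nabla_{\bx}\cdot)$ and is not damped. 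Terms like $\epsilon\|\bu\|^2_{L^2_{(\mathcal C)}}$, which arise unavoidably from the Coriolis term $\rho\bo_{\bu}\times\bu$ and from $\bo_{\bu}$, $(\bo_{\bu})_t$, $\bxi_{\bu}$, $(\bxi_{\bu})_t$ entering the momentum and induction equations, therefore cannot be absorbed into the left-hand side. The correct inequality is of the form $\frac{\mathrm d}{\mathrm dt}\mathsf{E}+\mathsf{D}\le \mathsf{D}(\mathsf{E}+\mathsf{E}^2)+C\left(|\bxi_{\bu}|^2+|\bo_{\bu}|^2\right)$ (see \eqref{E-D}), with an extra source term that your Gr\"onwall step ignores.

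This is precisely why the theorem's hypothesis involves $\mathsf{F}^{\frac12}(0)$ and not only $\mathsf{E}(0)$: the rigid-body velocities are controlled \emph{separately}, by the exact energy identity \eqref{zero-f}, which yields the uniform-in-time bound $\sup_{0\le t\le T}(|\bo_{\bu}|^2+|\bxi_{\bu}|^2)\le C\mathsf{F}(0)$ in \eqref{uniform-bound}. Only with this bound in hand can one run the continuity argument of Lemma \ref{E}: at the first time where $\mathsf{E}=\epsilon_0$ one gets $\frac{\mathrm d}{\mathrm dt}\mathsf{E}\le -C\epsilon_0(1-\epsilon_0-\epsilon_0^2)+C\mathsf{F}(0)$, and the contradiction requires $\mathsf{F}(0)\lesssim\epsilon_0^2$, i.e.\ $\mathsf{F}^{1/2}(0)\le\epsilon_0$. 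If your inequality held as stated, the hypothesis on $\mathsf{F}(0)$ would be superfluous, which should have been a warning sign. You should also note that your item (c) is not merely a matter of citing elliptic regularity for the curl--curl operator: because the boundary conditions $\bb\cdot\bn=0$, $(\nabla_{\bx}\times\bb)\times\bn=0$ do not commute with generic second derivatives, the paper needs a separate localization/flattening argument (Lemma 3.1) to obtain the second-order tangential estimates of $\bb$ near $\partial\mathcal C$ before the normal derivatives can be recovered from the equation; this is one of the genuinely new technical ingredients beyond Valli and should not be waved through.
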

 We also prove the existence of finite energy weak solutions to $\eqref{mhd-x}$-$\eqref{com-x}$.
\begin{theorem}\label{weak-solution-existence}
	Assume that $\mathcal{S}$ is a bounded domain and $\partial\mathcal{C}$ is in class of $C^{2+\nu}$ with $\nu>0$. Let $\gamma>\frac{3}{2}$, $\rho_0\geq 0$, $\rho_0\in L^\gamma_{(\mathcal{S})}$, $(\rho\bu)_0=\bq\in L^1_{(\mathcal{S})}$, where $\bq=0$ on the set $\rho_0=0$ and $\int_{\mathcal{S}}\bq\ddx=0$. Moreover, $\bb_0\in L^2_{(\mathcal{C})}$, $\mathrm{div}_{\bx}\bb_0=0$ in $\mathcal{D}'_{(\mathcal{C})}$. Then there exists a weak solution to coupled system \eqref{mhd-x}--\eqref{com-x} in the sense of  Definition $\ref{weak-solution}$.
\end{theorem}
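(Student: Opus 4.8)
\textit{Proof sketch.} The plan is to run the three-level approximation scheme of the Lions--Feireisl theory for the compressible Navier--Stokes equations (Faedo--Galerkin, artificial viscosity $\varepsilon>0$ in the continuity equation, artificial pressure $\delta\rho^\beta$ with $\beta$ large), in the form adapted to a rigid body carrying a fluid-filled cavity in \cite{GMN-2020-IJNM}, and to carry the resistive induction equation through all of its steps. At every level the Newton laws \eqref{com-x} are encoded in the test space: the momentum equation is tested only against fields $\varphi$ on $\mathcal S$ that reduce to an arbitrary rigid velocity $\bo_\varphi\times\bx+\bxi_\varphi$ on $\mathcal B$; using \eqref{com-x} this replaces the interfacial stress integral $\int_{\partial\mathcal C}\bx\times(T+\bb\otimes\bb-\tfrac12|\bb|^2\mathbb I)\cdot\bn$ by the body's inertial terms and turns the momentum balance into a single identity on $\mathcal S$. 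The density $\rho$ and the magnetic field $\bb$ live on $\mathcal C$ only.

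First I would set up and solve the regularized problem. For fixed $\varepsilon,\delta>0$: add $\varepsilon\Delta\rho$ to $\eqref{mhd-x}_1$ with $\nabla_{\bx}\rho\cdot\bn=0$ on $\partial\mathcal C$, replace $p(\rho)$ by $p(\rho)+\delta\rho^\beta$, add the standard viscous correction $\varepsilon(\nabla_{\bx}\bu)\nabla_{\bx}\rho$ to $\eqref{mhd-x}_2$, and smooth the data so that $0<\underline\rho\le\rho_0\le\overline\rho$ and $\bb_0$ is divergence-free with $\bb_0\cdot\bn=0$; the induction equation and its boundary conditions are kept. I would solve this by a Faedo--Galerkin scheme: expand $\bu$ and $\bb$ in finite-dimensional bases compatible with the constraints (rigid on $\mathcal B$ for $\bu$; divergence-free, $\bb\cdot\bn=0$ for $\bb$), solve the parabolic continuity equation for $\rho=\rho[\bu]$ with the maximum principle giving $0<\underline\rho\le\rho\le\overline\rho$, and close the finite system by Schauder's fixed-point theorem; then send the Galerkin dimension to infinity using the estimates below.

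The a priori estimates come from testing $\eqref{mhd-x}_2$ by $\bu$, the angular equation in \eqref{com-x} by $\bo$ and the constraint by $\bxi$, and $\eqref{mhd-x}_3$ by $\bb$. The terms $\rho\,\bo\times\bu$, $\bo\times\bb$ and $\bo\times(\boldsymbol I_c\cdot\bo)$ are pointwise orthogonal and drop; the boundary stress term of the fluid momentum equation combines with the body relations into $\tfrac{\mathrm d}{\mathrm dt}\bigl(\tfrac12\bo\cdot\boldsymbol I_c\cdot\bo+\tfrac{m_{\mathcal B}}2|\bxi|^2\bigr)$; the pressure term yields $\tfrac{\mathrm d}{\mathrm dt}\int_{\mathcal C}P(\rho)$ via the continuity equation and $\bv\cdot\bn=0$ on $\partial\mathcal C$; and the Lorentz force $\mathrm{div}_{\bx}(\bb\otimes\bb-\tfrac12|\bb|^2\mathbb I)$ tested by $\bu$ cancels exactly against the advection and stretching terms of the induction equation, while the resistive term contributes $\int_{\mathcal C}|\nabla_{\bx}\times\bb|^2$ thanks to $\bb\cdot\bn=0$, $(\nabla_{\bx}\times\bb)\times\bn=0$. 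This gives the energy inequality bounding $\sqrt\rho\,\bu$ in $L^\infty(L^2)$, $\rho$ in $L^\infty(L^\gamma\cap L^\beta)$, $\nabla_{\bx}\bu$ in $L^2(L^2)$ (by Korn's inequality), $\bb$ in $L^\infty(L^2)\cap L^2(H^1)$, $\bo$ and $\bxi$ in $L^\infty$, plus the $\varepsilon$-dissipation of $\nabla_{\bx}\rho$. A Bogovskii-type test function $\varphi$ supported in $\mathcal C$ with $\mathrm{div}_{\bx}\varphi=\rho^\theta-\langle\rho^\theta\rangle_{\mathcal C}$, $\theta>0$ small, then upgrades this to a bound on $\rho$ in $L^{\gamma+\theta}$ in space and time, uniform in $\varepsilon,\delta$; since $\gamma>\tfrac32$, the momentum flux $\rho\bv\otimes\bu$ and $\rho\bu$ are then equi-integrable. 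The magnetic field is no obstacle here: the induction equation being parabolic, Aubin--Lions (parabolic compactness together with a bound for $\bb_t$ in a negative-order space read off from the equation) gives $\bb\to\bb$ strongly in $L^2(L^2)$, hence by interpolation with $L^2(H^1)$ strongly in $L^p(L^p)$ for all $p<\tfrac{10}3$, so $\bb\otimes\bb$ converges strongly in $L^p$, $p<\tfrac53$, and, after rewriting $\bu\cdot\nabla_{\bx}\bb+\bb\,\mathrm{div}_{\bx}\bu-\bb\cdot\nabla_{\bx}\bu$ as $\mathrm{div}_{\bx}(\bu\otimes\bb-\bb\otimes\bu)$ (using $\mathrm{div}_{\bx}\bb=0$), the tensor $\bu\otimes\bb-\bb\otimes\bu$ converges as well.

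The two passages to the limit, $\varepsilon\to0$ and then $\delta\to0$, both hinge on strong a.e.\ convergence of the density, and the second is the main obstacle. For $\varepsilon\to0$ (with $\delta,\beta$ fixed and $\rho$ highly integrable) this follows from the effective viscous flux identity, proved via the div--curl lemma together with the renormalized continuity equation in the sense of DiPerna--Lions; the magnetic and rigid-body terms pass to the limit by the strong convergences above and because the constrained test space is preserved, yielding a solution of the $\delta$-problem. For $\delta\to0$ the effective flux identity gives only weak convergence of $p(\rho_\delta)$, and to promote it to strong convergence of $\rho_\delta$ one must control the amplitude of the density oscillations through Feireisl's oscillation defect measure $\mathrm{osc}_{\gamma+1}[\rho_\delta\to\rho]$; its finiteness is precisely what the hypothesis $\gamma>\tfrac32$ guarantees (it is equivalent to $\gamma+\theta$ exceeding the critical integrability threshold). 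Boundedness of this defect measure together with the renormalized continuity equation for the limit force $\overline{\rho\log\rho}=\rho\log\rho$, hence $\rho_\delta\to\rho$ a.e.; one then passes to the limit in every remaining term---the convective and pressure terms by a.e.\ convergence of $\rho_\delta$, the Lorentz force and $\mathrm{div}_{\bx}(\bu_\delta\otimes\bb_\delta-\bb_\delta\otimes\bu_\delta)$ by the strong $L^2$ convergence of $\bb_\delta$, and the Newton relations \eqref{com-x} because the test space is unchanged at every level---and obtains a weak solution in the sense of Definition~\ref{weak-solution}.
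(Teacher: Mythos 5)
Your proposal follows essentially the same route as the paper: a Faedo--Galerkin/artificial-viscosity/artificial-pressure three-level scheme with the rigid-body constraint encoded in the test space, energy estimates plus a Bogovskii-operator improvement of the density integrability, and strong convergence of the density via the effective viscous flux identity, the renormalized continuity equation, and the boundedness of the density oscillations (which the paper phrases through the truncations $T_k$, $L_k$ rather than the oscillation defect measure, but this is the same argument). The only cosmetic differences are that the paper solves the linear induction equation exactly for a given Galerkin velocity rather than discretizing $\bb$, and closes the Galerkin level by a contraction rather than Schauder; neither affects correctness.
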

Finally, the weak-strong uniqueness principle is established. 
\begin{theorem}\label{w.s.strong.thm}
	Assume that  $(\rho, \bu, \bb)$ is a weak solution to the system $\eqref{mhd-x}$-$\eqref{com-x}$ established in Theorem $\ref{weak-solution-existence}$. Let $(r, \bU, \bB)$ be a strong solution to the system $\eqref{mhd-x}$-$\eqref{com-x}$  which emanates from the same initial data, and satisfies 
	\begin{equation*}
		\begin{split}
	\left(r,\frac{1}{r},\nabla_{\bx}r,  \bU,\nabla_{\bx}\bU,\bU_t, \bB,\nabla_{\bx}\bB,\bB_t \right)\in L^\infty(0,T; \mathcal{C}). 
\end{split}
\end{equation*}
Then it holds that 
\begin{equation*}
\begin{split}
\rho=r,\ \mbox{a.e. in }\mathcal{\mathcal{C}},\ \bu=\bU\ \mbox{a.e. in }\mathcal{S}, \ \bb=\bB\ \mbox{a.e. in }\mathcal{C}.
\end{split}
\end{equation*}
\end{theorem}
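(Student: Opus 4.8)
The natural approach is a relative energy (relative entropy) argument for the coupled fluid--body--magnetic system, in the spirit of the relative-energy method for compressible flows adapted to fluid-structure interaction. Given the weak solution $(\rho,\bu,\bb)$ and the strong solution $(r,\bU,\bB)$, I would work with the relative energy
\begin{equation*}
\mathcal{E}(t):=\int_{\mathcal{S}}\tfrac12\rho\,|\bu-\bU|^2\ddx+\int_{\mathcal{C}}\big(H(\rho)-H'(r)(\rho-r)-H(r)\big)\ddx+\tfrac12\int_{\mathcal{C}}|\bb-\bB|^2\ddx,
\end{equation*}
where $H(\rho)=\tfrac{a}{\gamma-1}\rho^{\gamma}$ is the pressure potential (so that $p(\rho)=\rho H'(\rho)-H(\rho)$), in the kinetic term $\rho$ is understood to be extended on $\mathcal{B}$ by the time-independent mass density of the rigid body (so that this integral also carries the relative kinetic energy of $\mathcal{B}$), and $\bu|_{\mathcal{B}}=\bo\times\bx+\bxi$, $\bU|_{\mathcal{B}}=\bo_r\times\bx+\bxi_r$ are the rigid velocity fields associated with the two solutions. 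Strict convexity of $H$ makes the middle integrand nonnegative and coercive, so $\mathcal{E}(t)$ controls $\norm{\sqrt{\rho}\,(\bu-\bU)}_{L^2(\mathcal{S})}^2$, $\norm{\bb-\bB}_{L^2(\mathcal{C})}^2$ and, after the customary splitting into essential and residual sets determined by the $L^{\infty}$ bounds on $r$ and $1/r$, the density difference on the essential set together with $\int_{\mathrm{res}}(1+\rho^{\gamma})\ddx$. Since the two solutions emanate from the same data, $\mathcal{E}(0)=0$.

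The plan is to first record the energy inequality contained in Definition~\ref{weak-solution}: it bounds the total energy $\tfrac12\int_{\mathcal{S}}\rho|\bu|^2\ddx+\int_{\mathcal{C}}H(\rho)\ddx+\tfrac12\int_{\mathcal{C}}|\bb|^2\ddx$ at time $\tau$, augmented by the accumulated dissipation $\int_0^{\tau}\!\int_{\mathcal{C}}\big(S(\nabla_{\bx}\bu):\nabla_{\bx}\bu+|\nabla_{\bx}\times\bb|^2\big)\dxdt$, by the value of the total energy at $t=0$. Into this inequality one inserts the weak formulation of the momentum equation tested by $\bU$ --- an admissible test field precisely because it is rigid on $\mathcal{B}$, and the coupled momentum formulation already encodes the Newton balances \eqref{com-x} --- together with the continuity equation tested by $\tfrac12|\bU|^2$ and by $H'(r)$, and the induction equation tested by $\bB$; every such pairing is justified by the assumed regularity $\big(r,\tfrac1r,\nabla_{\bx}r,\bU,\nabla_{\bx}\bU,\bU_t,\bB,\nabla_{\bx}\bB,\bB_t\big)\in L^{\infty}(0,T;\mathcal{C})$, and the boundary conditions $\bb\cdot\bn=0$, $(\nabla_{\bx}\times\bb)\times\bn=0$ are used to identify the magnetic dissipation and to kill boundary contributions from the transport terms. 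Rewriting the strong-solution contributions by means of the classical equations \eqref{mhd-x}--\eqref{com-x} satisfied by $(r,\bU,\bB)$, one arrives at a relative energy inequality of the form
\begin{equation*}
\mathcal{E}(\tau)+\int_0^{\tau}\!\int_{\mathcal{C}}\Big(S\big(\nabla_{\bx}(\bu-\bU)\big):\nabla_{\bx}(\bu-\bU)+\big|\nabla_{\bx}\times(\bb-\bB)\big|^2\Big)\dxdt\le\mathcal{E}(0)+\int_0^{\tau}\mathcal{R}(t)\dt,
\end{equation*}
where $\mathcal{R}$ gathers convective, pressure, rotational and magnetic error terms.

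The decisive step is the bound $\mathcal{R}(t)\le C\,\mathcal{E}(t)+\delta\,\mathcal{D}(t)$, with $\mathcal{D}(t)$ the dissipation integrand on the left and $\delta>0$ arbitrarily small. The remainder splits into three parts. The compressible part --- the convective error $\int_{\mathcal{C}}\rho\,(\bu-\bU)\otimes(\bu-\bU):\nabla_{\bx}\bU$ and the pressure error built from $p(\rho)-p'(r)(\rho-r)-p(r)$ --- is estimated exactly as in the classical weak--strong uniqueness for compressible Navier--Stokes, the condition $\gamma>\tfrac32$ and the essential/residual decomposition being used to absorb the pressure error into $\mathcal{E}$. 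The rotational part collects the Coriolis term $\rho\,\bo\times\bu$ and the differences of the angular- and linear-momentum balances \eqref{com-x}; using antisymmetry of the cross product, the $L^{\infty}$ bounds on $\bo_r,\bxi_r$, and the fact that $(\bu-\bU)|_{\mathcal{B}}$ is again a rigid field, these reassemble into bulk terms controlled by $\mathcal{E}$. The magnetic part requires combining the Lorentz force $\mathrm{div}_{\bx}(\bb\otimes\bb-\tfrac12|\bb|^2\mathbb{I})$ tested against $\bu-\bU$ with the stretching term $\bb\cdot\nabla_{\bx}\bu$ and the compression term $\bb\,\mathrm{div}_{\bx}\bu$ of the induction equation tested against $\bb-\bB$, so that --- just as in the absolute energy balance --- the leading quadratic cross terms cancel, leaving only error terms of the type $\int_{\mathcal{C}}(\bb-\bB)\otimes(\bb-\bB):\nabla_{\bx}\bU$, $\int_{\mathcal{C}}(\bb-\bB)\cdot\nabla_{\bx}(\bu-\bU)\cdot\bB$ and $\int_{\mathcal{C}}(\bb-\bB)\,\mathrm{div}_{\bx}(\bu-\bU)\cdot\bB$, which are bounded by $\mathcal{E}$ plus $\delta\,\mathcal{D}$ after a Korn/Young argument. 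Absorbing the $\delta\,\mathcal{D}$ terms on the left and applying Gronwall's lemma with $\mathcal{E}(0)=0$ forces $\mathcal{E}\equiv0$ on $[0,T]$; this yields $\bu=\bU$ a.e.\ in $\mathcal{S}$, $\bb=\bB$ a.e.\ in $\mathcal{C}$, and, by strict convexity of $H$, $\rho=r$ a.e.\ in $\mathcal{C}$.

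The step I expect to be the main obstacle is the bookkeeping of the coupling at the level of the \emph{relative} quantities: verifying that the magnetic cross terms between the Lorentz force and the induction equation still cancel after the strong-solution contributions have been subtracted (and not merely in the absolute balance), and that the rigid-body terms in \eqref{com-x} recombine consistently with the fluid stress when one tests with the specific field $\bU$ rather than with a generic admissible test function --- which forces one to treat $(\bu-\bU)|_{\mathcal{B}}$ throughout as a genuine rigid velocity field and to integrate by parts coherently across $\partial\mathcal{C}$. The remaining points --- justifying all the pairings for the low-regularity weak solution and controlling the pressure error near vacuum when $\tfrac32<\gamma<2$ --- are handled by the standard relative-energy machinery.
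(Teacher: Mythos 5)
Your proposal follows essentially the same route as the paper: the same relative energy functional (with the rigid-body kinetic part included), a relative energy inequality obtained by testing the weak momentum, continuity and induction equations with $\bU$, $\tfrac12|\bU|^2$, $H'(r)$ and $\bB$ respectively, the essential/residual splitting of the density together with the treatment of $(\bu-\bU)|_{\mathcal{B}}$, $\boldsymbol{\omega}_{\bu}-\boldsymbol{\Omega}_{\bU}$, $\boldsymbol{\xi}_{\bu}-\boldsymbol{\Xi}_{\bU}$ as rigid quantities controlled by $\mathcal{E}$, the grouping of the Lorentz-force and induction cross terms, and a final absorption/Gr\"onwall argument. The points you flag as delicate (the magnetic cross-term bookkeeping and the recombination of the rigid-body balances) are exactly the ones the paper handles by the grouped estimates of its terms $\mathcal{K}_1$, $\mathcal{K}_2$, $\mathcal{K}_3$, so the proposal is correct and matches the paper's proof.
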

\begin{remark}\ \\  
\begin{itemize}
\item Under the assumption of the smallness of the initial data, we can construct the global existence of strong solutions to $\eqref{mhd-x}$-$\eqref{com-x}$ for any  $p(\rho)$ increasing and $C^1$.
\end{itemize}
\begin{itemize}
	\item Condition $\gamma>\frac{3}{2}$ is necessary to construct a weak solution, we also mention that the existence of finite energy weak solution is valid for general initial data. 
\end{itemize}
\begin{itemize}
	\item We cannot guarantee the uniqueness of the weak solutions. However, we utilize the relative entropy inequality to show that a weak solution coincides with a strong solution as long as they have the same initial data.
\end{itemize}
\end{remark}
\begin{remark}
Let us stress that since we consider that the magnetic field on the boundary $\cup_{t>0}\{t\}\times\partial\mathcal{C}(t)$  satisfies $\bB\cdot \boldsymbol{N}=0,\quad (\nabla_\by \times \bB) \times \boldsymbol{N} = 0$ it gives us contribution in the conservation of the angular momentum, see (\ref{com}).
\end{remark}
\subsection{Notations}
 Throughout the rest of this paper, $C$ will be used to denote a generic positive constant which may change line by line but it is always independent of the solution. We use $\epsilon$ to denote the small positive constant. Furthermore, $A \sim B$ means that there exist positive constants $C_1$ and $C_2$ such that $C_1A\leq B\leq C_2A$. The Lebesque spaces are denoted by $L^p_{(\mathcal C)}$, $W^{n, p}_{(\mathcal{C})}$ denotes usual Sobolev spaces with standard norm $\|\cdot\|_{W^{n, p}_{(\mathcal{C})}}$, $H^n_{(\mathcal{C})}:=W^{n,2}_{(\mathcal{C})}$. We further define function spaces
\begin{equation*}
W^{n, p}_{(\mathcal{S})}=\{\boldsymbol{\phi}:\mathcal{S}\to\mathbb{R}^3, \quad \|\boldsymbol{\phi}\|_{W^{n, p}_{(\mathcal{S})}}\leq \infty, \quad \boldsymbol{\phi}|_{\mathcal{B}}=\boldsymbol{l}\times \bx+\boldsymbol{k},  \quad \boldsymbol{l},\boldsymbol{k}\in \mathbb{R}^3\}.
\end{equation*}

Particularly, $L^{p}_{(\mathcal{S})}=W^{0, p}_{(\mathcal{S})}$. 
For any $\boldsymbol{\phi}\in W^{n, p}_{(\mathcal{S})}$, we use    $\boldsymbol{l}_{\boldsymbol{\phi}}$ and $\boldsymbol{k}_{\boldsymbol{\phi}}$ to emphasize that these vectors are associated with $\boldsymbol{\phi}$. 
We also define
\begin{equation*}
\mathcal{V}_{(\mathcal{S})}=\{\boldsymbol{\varphi}:\mathcal{S}\to\mathbb{R}^3, \quad \|\boldsymbol{\varphi}\|_{C^{\infty}_{(\mathcal{S})}}\leq \infty, \quad \boldsymbol{\varphi}|_{\mathcal{B}}=\boldsymbol{l}_{\boldsymbol{\varphi}}\times \bx+\boldsymbol{k}_{\boldsymbol{\varphi}},  \quad \boldsymbol{l}_{\boldsymbol{\varphi}},\boldsymbol{k}_{\boldsymbol{\varphi}}\in \mathbb{R}^3\}.
\end{equation*}

Especially, due to the symmetry of matrix $S$ in $\eqref{S}$,  we have
\begin{equation}\label{symmetry}
\begin{split}
&S(\nabla_{\bx}\boldsymbol{\psi}):\nabla_{\bx}\boldsymbol{\phi}=S(\nabla_{\bx}\boldsymbol{\psi}):\nabla_{\bx}(\boldsymbol{\phi}-\boldsymbol{l}_{\boldsymbol{\phi}}\times \bx),\\
&S(\nabla_{\bx}\boldsymbol{\phi}):\nabla_{\bx}\boldsymbol{\phi}=S(\nabla_{\bx}(\boldsymbol{\phi}-\boldsymbol{l}_{\boldsymbol{\phi}}\times \bx)):\nabla_{\bx}(\boldsymbol{\phi}-\boldsymbol{l}_{\boldsymbol{\phi}}\times \bx),
\end{split}
\end{equation}
for any $\boldsymbol{\psi}, \boldsymbol{\phi} \in W^{n, p}_{(\mathcal{S})}$.

The contents of this paper are organized into five sections. After this introduction and main results, which constitute Section 1, we give the local existence of strong solution in Section 2. Section 3 mainly concerns the global existence of strong solutions for small initial data. Section 4 provides the existence of the finite energy weak solution. Furthermore, Section 5 is devoted to the weak-strong uniqueness principle.

\section{Local Existence of Strong Solutions to Coupled System}

We introduce new quantities 
\begin{equation*}
\begin{split}
\bar{\rho}=\frac{1}{\mathrm{vol}(\mathcal{C})}\int_{\mathcal{C}}\rho\mathrm{d}{\bx}, \quad \varrho=\rho-\bar{\rho}, 
\end{split}
\end{equation*}
and we derive an equivalent system of equations that has a form
\begin{equation}\label{re-system-mass}
\left.
\begin{split}
&\varrho_t+\bv\cdot\nabla_{\bx}\varrho+\varrho\mathrm{div}_{\bx}\bv+\bar{\rho}\mathrm{div}_{\bx}\bv=0, \quad \forall (t,\bx)\in (0,\infty)\times\mathcal{C},\\
&\int_{\mathcal{C}}\varrho(t, \bx)\ddx=0, \quad \forall t\in(0, \infty),
\end{split}\right\}
\end{equation}
\begin{equation}\label{re-system-momentum}
\left.
\begin{split}
&\int_{\mathcal{S}}\rho_{\mathcal{S}}(\bu_t\cdot \boldsymbol{\phi}+\bv\cdot\nabla_{\bx}\bu\cdot\boldsymbol{\phi}+\bo_{\bu}\times \bu\cdot\boldsymbol{\phi})\ddx-\int_{\mathcal{S}}p(\rho)\mathrm{div}_{\bx}\boldsymbol{\phi}\ddx\\
&+\int_{\mathcal{S}}S(\nabla_{\bx}\bu):\nabla_{\bx}\boldsymbol{\phi}\ddx+\int_{\mathcal{S}}\left(\bb\otimes\bb-\frac{1}{2}|\bb|^2\mathbb{I}\right):\nabla_{\bx}\boldsymbol{\phi}\ddx=0
\end{split}\right\}\forall \boldsymbol{\phi}\in W^{1,2}(\mathcal{S}), \forall t\in (0,\infty),
\end{equation}

\begin{equation}\label{re-system-induction}
\left.
\begin{split}
\bb_t+(\bo_{\bu}\times \bb)-(\bo_{\bu}\times \bx+\bxi_{\bu})\cdot\nabla_{\bx}\bb+\bu\cdot\nabla_{\bx}\bb+\bb\mathrm{div}_{\bx}\bu-\bb\cdot\nabla_{\bx}\bu=-\nabla_\bx \times(\nabla_\bx \times \bb)&\\
\mathrm{div}_{\bx}\bb=0&\\
 \quad \bb\cdot \bn=0,\quad (\nabla_\bx\times \bb)\times \bn = 0 \quad on\quad  (0,\infty)\times\partial\mathcal{C}&
\end{split}\right\}  
\end{equation}
\begin{equation}\label{re-system}
\begin{split}
&m_{\mathcal{B}}\bxi_{\bu}=-\int_{\mathcal{C}}\rho\bu\ddx, \quad \forall t\in(0, \infty), 
\end{split}
\end{equation}
where 
\begin{equation*}
\rho_{\mathcal{S}}=\left\{
\begin{split}
&\rho(t, \bx), \quad \forall \bx \in \mathcal{C},\\
&1, \quad \forall \bx \in \mathcal{B}.
\end{split}
\right.
\end{equation*}
For the compatibility between the notions of weak and strong solutions we refer to \cite{GMN-2019-ARMA}.

We state the local existence of strong solutions to the coupled system $\eqref{re-system-mass}-\eqref{re-system}$.
\begin{theorem}\label{local-existence}
Let $\mathcal{C}$ be of class $C^4$ and the initial data satisfy
\begin{equation*}
\begin{split}
&\rho_0\in W^{2,2}_{(\mathcal{C})}, \quad 0< \underline{m}\leq \rho_0\leq \bar{m}, \\
&\bu_0\in W^{1,2}_{(\mathcal{S})}, \quad \bu_0|_{\mathcal{C}}\in W^{2,2}_{(\mathcal{C})}, \quad \bb_0|_{\mathcal{C}}\in W^{2,2}_{(\mathcal{C})}.
\end{split}
\end{equation*}
Then there exists a small time $T^*>0$, such that $(\rho, \bu, \bb )$ is a unique solution to $\eqref{mhd-x}$-$\eqref{com-x}$.
Moreover, $(\rho, \bu, \bb )$ satisfies that 
\begin{equation*}
\begin{split}
&\rho(t, \bx) >0,\quad \forall(t, \bx)\in [0,T^*]\times \mathcal{C},\quad \rho\in C(0,T^*; H^{2}_{(\mathcal{C})}), \quad \rho_t\in C(0, T^*; H^{1}_{(\mathcal{C})}), \\
&\bu \in C(0,T^*; W^{1,2}_{(\mathcal{S})}), \quad \bu|_{\mathcal{C}}\in L^2(0,T^*; H^{3}_{(\mathcal{C})})\cap C(0,T^*;H^{2}_{(\mathcal{C})}),\\
& \bu_t\in L^2(0,T^*; W^{1,2}_{(\mathcal{S})})\cap C(0,T^*; L^2_{(\mathcal{S})}),\\
&\bb\in L^2(0,T^*;H^{3}_{(\mathcal{C})})\cap C(0,T^*; H^{2}_{(\mathcal{C})}), \quad\bb_t\in L^2(0,T^*; H^{1}_{(\mathcal{C})})\cap C(0,T^*; L^2_{(\mathcal{C})}).
\end{split}
\end{equation*}
\end{theorem}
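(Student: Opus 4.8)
# Proof Proposal for Theorem 2.1 (Local Existence)

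The plan is to implement the successive-approximation (linearization plus fixed-point) scheme of Valli \cite{V-1983-ASNSP}, enlarged so as to carry along the rigid-body equations hidden in \eqref{re-system-momentum} and the induction equation \eqref{re-system-induction}. Starting from a suitable initial guess $(\bu^0,\bb^0)$, and given the $k$-th iterate $(\bu^k,\bb^k)$, set $\bo^k:=\bo_{\bu^k}$, $\bxi^k:=\bxi_{\bu^k}$ and $\bv^k:=\bu^k-\bo^k\times\bx-\bxi^k$, and build $(\rho^{k+1},\bu^{k+1},\bb^{k+1})$ in three decoupled linear steps. First, I would solve the linear transport problem $\varrho^{k+1}_t+\bv^k\cdot\nabla_\bx\varrho^{k+1}+(\varrho^{k+1}+\bar\rho)\,\diver_\bx\bv^k=0$ with $\varrho^{k+1}(0)=\rho_0-\bar\rho$ by the method of characteristics; since $\bv^k\cdot\bn=0$ on $\partial\mathcal C$ (from $\bu^k=\bo^k\times\bx+\bxi^k$ there) the flow of $\bv^k$ preserves $\mathcal C$, and the usual transport estimates give $\rho^{k+1}\in C([0,T];H^2_{(\mathcal C)})$, $\rho^{k+1}_t\in C([0,T];H^1_{(\mathcal C)})$ together with the two-sided bound $\tfrac12\underline m\le\rho^{k+1}\le 2\bar m$ on a short interval.

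Second, I would solve the linearized momentum balance: find $\bu^{k+1}\in W^{1,2}_{(\mathcal S)}$, with $\bu^{k+1}|_{\mathcal C}\in L^2(0,T;H^3_{(\mathcal C)})\cap C([0,T];H^2_{(\mathcal C)})$ and $\bu^{k+1}_t\in L^2(0,T;W^{1,2}_{(\mathcal S)})\cap C([0,T];L^2_{(\mathcal S)})$, satisfying for every $\boldsymbol\phi\in W^{1,2}_{(\mathcal S)}$ the weak identity obtained from \eqref{re-system-momentum} by freezing the density at $\rho^{k+1}$, the transport field at $\bv^k$, the Coriolis coefficient at $\bo^k$ and the Maxwell stress at $\bb^k$. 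As in \cite{GMN-2019-ARMA}, the essential point is that using test functions from $W^{1,2}_{(\mathcal S)}$---which reduce to rigid motions on $\mathcal B$---automatically encodes the Newton balance laws \eqref{com-x}, so no separate ODE has to be solved; the associated Stokes-type operator on $\mathcal S$, coupling the fluid part in $\mathcal C$ with the finite-dimensional rigid part on $\mathcal B$, possesses the maximal-regularity and elliptic-regularity estimates that yield the claimed bounds for $\bu^{k+1}$, hence for $\bo^{k+1}$, $\bxi^{k+1}$ through \eqref{re-system} and the rigid-motion decomposition on $\mathcal B$. Simultaneously I would solve the linearized induction equation for $\bb^{k+1}$: a linear parabolic system with principal part $\nabla_\bx\times(\nabla_\bx\times\,\cdot\,)$ and lower-order coefficients built from $(\bo^k,\bxi^k,\bu^k)$, subject to $\bb^{k+1}\cdot\bn=0$ and $(\nabla_\bx\times\bb^{k+1})\times\bn=0$ on $\partial\mathcal C$. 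One verifies that $\diver_\bx\bb^{k+1}=0$ is propagated from $\diver_\bx\bb_0=0$ (its divergence obeys a homogeneous transport equation with no-flux boundary data), so the principal part acts like $-\Delta$ with perfectly-conducting boundary conditions; the corresponding Hodge-type elliptic estimate together with $L^2$ maximal regularity give $\bb^{k+1}\in L^2(0,T;H^3_{(\mathcal C)})\cap C([0,T];H^2_{(\mathcal C)})$ and $\bb^{k+1}_t\in L^2(0,T;H^1_{(\mathcal C)})\cap C([0,T];L^2_{(\mathcal C)})$.

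Third, I would close the iteration by uniform a priori estimates. Fix $R$ in terms of the norms of $(\rho_0,\bu_0,\bb_0)$ and let $\mathcal X_R$ be the closed ball of radius $R$, intersected with $\{\tfrac12\underline m\le\rho\le 2\bar m\}$, in the product space carrying the regularity stated in the theorem. Transport estimates for $\rho^{k+1}$, parabolic estimates for $\bu^{k+1}$ (testing with $\bu^{k+1}_t$ and with the Stokes test functions, the Maxwell stress being controlled by $\|\diver_\bx(\bb^k\otimes\bb^k-\tfrac12|\bb^k|^2\mathbb I)\|_{L^2_{(\mathcal C)}}\lesssim\|\bb^k\|_{H^2_{(\mathcal C)}}^2$), and parabolic estimates for $\bb^{k+1}$ (testing with $\bb^{k+1}_t$ and with $\nabla_\bx\times(\nabla_\bx\times\bb^{k+1})$) all lead to bounds of the type $C(R)+C(R)\int_0^t(\cdots)$, whence a Gronwall argument on a short interval $[0,T^*]$, $T^*=T^*(R)$, shows that $(\bu^k,\bb^k)\mapsto(\bu^{k+1},\bb^{k+1})$ maps $\mathcal X_R$ into itself. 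Then, writing the equations for the differences $\rho^{k+1}-\rho^k$, $\bu^{k+1}-\bu^k$, $\bb^{k+1}-\bb^k$ and using the uniform bounds, one shows this map is a contraction in the weaker metric of $C([0,T^*];L^2_{(\mathcal C)})\times\big(C([0,T^*];L^2_{(\mathcal S)})\cap L^2(0,T^*;W^{1,2}_{(\mathcal S)})\big)\times\big(C([0,T^*];L^2_{(\mathcal C)})\cap L^2(0,T^*;H^1_{(\mathcal C)})\big)$ after possibly shrinking $T^*$. The limit lies in $\mathcal X_R$ by weak lower semicontinuity, solves \eqref{mhd-x}--\eqref{com-x}, and the sharp time-continuity follows from the equations by a standard Strauss/Lions--Aubin argument; uniqueness on $[0,T^*]$ follows from the same difference estimate applied to two solutions.

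The main obstacle is the interplay between the hyperbolic density equation, which costs one derivative, and the two nonstandard elliptic operators: the Stokes-type operator on $\mathcal S$ that builds in the coupled rigid-body motion and the Newton laws (for which one leans on the regularity theory of \cite{GMN-2019-ARMA}), and the curl--curl operator with the perfectly-conducting conditions $\bb\cdot\bn=0$, $(\nabla_\bx\times\bb)\times\bn=0$ (which requires Hodge-type elliptic estimates adapted to these boundary conditions and the propagation of $\diver_\bx\bb=0$ through the linear scheme). Arranging the function spaces so that the fixed-point map closes---controlling in particular the transport term $\bv^k\cdot\nabla_\bx\rho^{k+1}$, the Coriolis term $\bo^k\times\bu^{k+1}$ and the Maxwell-stress coupling at the correct regularity---is where the real work lies; the energy estimates themselves are lengthy but routine once the linear theory is in place.
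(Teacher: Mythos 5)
Your proposal follows essentially the same route as the paper: the identical decomposition into three decoupled linear problems (a transport equation for the density driven by the frozen relative velocity, a linearized momentum problem on $\mathcal S$ whose rigid test functions encode the Newton laws, and a linearized induction equation with the perfectly conducting boundary conditions), closed by a fixed-point argument on a short time interval, exactly as the paper does following Section 4.3 of \cite{V-1983-ASNSP} and \cite{GMN-2019-ARMA}. The only difference is cosmetic: you close via successive approximations and a contraction in a weaker norm (which yields uniqueness directly), whereas the paper invokes a Schauder fixed-point argument; both closings are standard given the same three linear lemmas.
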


The proof of the above theorem is established by considering three linear equations and Schauder fixed-point argument. We just sketch the outline of the proof and present the result, interested readers can refer the Section 4.3 in \cite{V-1983-ASNSP}.

\subsection{Continuity Equation}
For given $\bar{\bv}$ and $\varrho_0$, we treat the linear continuity equation
\begin{equation}\label{linear-mass}
\begin{split}
\varrho_t+\bar{\bv}\cdot \nabla_{\bx}\varrho+\varrho\mathrm{div}_{\bx}\bar{\bv}+\bar{\rho}\mathrm{div}_{\bx}\bar{\bv}&=0,\quad \forall(t, \bx)\in (0, T)\times\mathcal{C},\\
\varrho|_{t=0}&=\varrho_0.
\end{split}
\end{equation}

The following can be found as \cite[Lemma 2.3]{V-1983-ASNSP}.
\begin{lemma} Let $\partial \mathcal{C}$ be of class $C^1$ and let us assume that 
\begin{equation*}
\begin{split}
&\bar{\bv}\in L^1(0,T;H^{3}_{(\mathcal{C})}), \quad   \bar{\bv}\cdot \boldsymbol{n}=0,\\
&\varrho_0\in H^{2}_{(\mathcal{C})}, \quad \int_{\mathcal{C}}\varrho_0\mathrm{d}{\bx}=0.
\end{split}
\end{equation*}
Then there exists a unique solution to $\eqref{linear-mass}$ such that 
\begin{equation*}
\begin{split}
\varrho\in C(0, T; H^{2}_{(\mathcal{C})}), \quad \int_{\mathcal{C}}\varrho(t, \bx)\mathrm{d}{\bx}=0,
\end{split}
\end{equation*}
and 
\begin{equation*}
\begin{split}
\|\varrho\|_{L^\infty(0,T;H^{2}_{(\mathcal{C})})}\leq C(\|\varrho_0\|_{H^{2}_{(\mathcal{C})}}+1)\exp(\|\bar{\bv}\|_{L^1(0,T;H^{3}_{(\mathcal{C})})}).
\end{split}
\end{equation*}
Furthermore, if $\bar{\bv}\in C([0,T], H^{2}_{(\mathcal{C})})$, we could obtain  $\varrho_t\in C([0,T],H^{1}_{(\mathcal{C})})$ and 
\begin{equation*}
\begin{split}
\|\varrho_t\|_{L^\infty(0,T;H^{1}_{(\mathcal{C})})}\leq C\|\bar{\bv}\|_{L^\infty(0,T;H^2_{(\mathcal{C})})}(\|\varrho_0\|_{H^{2}_{(\mathcal{C})}}+1)\exp(\|\bar{\bv}\|_{L^1(0,T;H^{3}_{(\mathcal{C})})}).
\end{split}
\end{equation*}
\end{lemma}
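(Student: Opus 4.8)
Since \eqref{linear-mass} is linear in $\varrho$, the plan is to derive the a priori estimates first and then recover existence, uniqueness and the regularity by a routine regularization. Throughout, $\bar{\rho}=\bar{\rho}(t)$ is the prescribed constant, and the tangency $\bar{\bv}\cdot\bn=0$ on $\partial\mathcal{C}$ is what kills every boundary term produced by an integration by parts. First I would record the two lowest-order facts. Writing $\bar{\bv}\cdot\nabla_{\bx}\varrho+\varrho\,\mathrm{div}_{\bx}\bar{\bv}=\mathrm{div}_{\bx}(\varrho\bar{\bv})$ and integrating \eqref{linear-mass} over $\mathcal{C}$ gives $\frac{\mathrm d}{\mathrm dt}\int_{\mathcal{C}}\varrho\ddx=0$, hence $\int_{\mathcal{C}}\varrho(t)\ddx=\int_{\mathcal{C}}\varrho_0\ddx=0$ for all $t$. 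Testing \eqref{linear-mass} by $\varrho$ and using $\int_{\mathcal{C}}\bar{\bv}\cdot\nabla_{\bx}\varrho\,\varrho\ddx=-\tfrac12\int_{\mathcal{C}}|\varrho|^2\,\mathrm{div}_{\bx}\bar{\bv}\ddx$ together with $H^3_{(\mathcal{C})}\hookrightarrow W^{1,\infty}_{(\mathcal{C})}$ yields $\tfrac{\mathrm d}{\mathrm dt}\|\varrho\|_{L^2_{(\mathcal{C})}}\le C\|\bar{\bv}\|_{H^3_{(\mathcal{C})}}\big(\|\varrho\|_{L^2_{(\mathcal{C})}}+1\big)$; the same computation applied to the difference of two solutions gives uniqueness.

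The core is the $H^2$ estimate. I would apply $D^\alpha$ with $|\alpha|\le 2$ to \eqref{linear-mass}, test with $D^\alpha\varrho$ and sum over $\alpha$. The transport part is again harmless, $\int_{\mathcal{C}}\bar{\bv}\cdot\nabla_{\bx}D^\alpha\varrho\,D^\alpha\varrho\ddx=-\tfrac12\int_{\mathcal{C}}|D^\alpha\varrho|^2\,\mathrm{div}_{\bx}\bar{\bv}\ddx$, while the commutator $[D^\alpha,\bar{\bv}\cdot\nabla_{\bx}]\varrho$ and the terms coming from $\varrho\,\mathrm{div}_{\bx}\bar{\bv}$ and $\bar{\rho}\,\mathrm{div}_{\bx}\bar{\bv}$ are controlled by Moser-type (tame) product inequalities, using $H^3_{(\mathcal{C})}\hookrightarrow W^{1,\infty}_{(\mathcal{C})}$, the embeddings $H^3_{(\mathcal C)}\hookrightarrow W^{2,6}_{(\mathcal C)}$ and $H^2_{(\mathcal{C})}\hookrightarrow W^{1,6}_{(\mathcal{C})}$ on the bounded domain $\mathcal{C}$, and the algebra property of $H^2_{(\mathcal{C})}$. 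This gives
\[
\tfrac{\mathrm d}{\mathrm dt}\|\varrho(t)\|_{H^2_{(\mathcal{C})}}\le C\|\bar{\bv}(t)\|_{H^3_{(\mathcal{C})}}\big(\|\varrho(t)\|_{H^2_{(\mathcal{C})}}+1\big),
\]
and the integral form of Grönwall's inequality (legitimate since $\bar{\bv}\in L^1(0,T;H^3_{(\mathcal{C})})$) produces exactly the stated bound $\|\varrho\|_{L^\infty(0,T;H^2_{(\mathcal{C})})}\le C(\|\varrho_0\|_{H^2_{(\mathcal{C})}}+1)\exp(\|\bar{\bv}\|_{L^1(0,T;H^3_{(\mathcal{C})})})$.

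To produce a genuine solution I would mollify: replace $\bar{\bv}$ by a smooth $\bar{\bv}_\delta$ kept tangent to $\partial\mathcal{C}$ and $\varrho_0$ by a smooth mean-zero $\varrho_{0,\delta}$; for such data \eqref{linear-mass} is solved classically by integrating the scalar linear ODE $\frac{\mathrm d}{\mathrm dt}\varrho_\delta=-(\varrho_\delta+\bar{\rho})\,\mathrm{div}_{\bx}\bar{\bv}_\delta$ along the characteristic flow of $\bar{\bv}_\delta$, which maps $\mathcal{C}$ onto itself precisely because $\bar{\bv}_\delta\cdot\bn=0$. The estimates above bound $\varrho_\delta$ uniformly in $L^\infty(0,T;H^2_{(\mathcal{C})})$ and, via the equation itself, $\partial_t\varrho_\delta$ uniformly in $L^1(0,T;H^1_{(\mathcal{C})})$; letting $\delta\to0$ and using weak-$*$ lower semicontinuity yields a solution $\varrho\in L^\infty(0,T;H^2_{(\mathcal{C})})\cap W^{1,1}(0,T;H^1_{(\mathcal{C})})\hookrightarrow C(0,T;H^1_{(\mathcal{C})})$ with $\int_{\mathcal{C}}\varrho=0$. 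Weak continuity $\varrho\in C_w(0,T;H^2_{(\mathcal{C})})$ follows from the uniform bound plus $H^1$-continuity, and strong continuity $\varrho\in C(0,T;H^2_{(\mathcal{C})})$ from continuity of $t\mapsto\|\varrho(t)\|_{H^2_{(\mathcal{C})}}$, which I would obtain by a DiPerna--Lions renormalization/commutator argument. Finally, when moreover $\bar{\bv}\in C([0,T];H^2_{(\mathcal{C})})$, reading $\varrho_t=-\bar{\bv}\cdot\nabla_{\bx}\varrho-(\varrho+\bar{\rho})\,\mathrm{div}_{\bx}\bar{\bv}$ off the equation and using that $H^2_{(\mathcal{C})}$ multiplies $H^1_{(\mathcal{C})}$ into $H^1_{(\mathcal{C})}$ gives $\varrho_t\in C([0,T];H^1_{(\mathcal{C})})$ together with the remaining bound.

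The hard part is the $H^2$ commutator/product step: in three space dimensions $H^2$ is an algebra but $H^1\not\hookrightarrow L^\infty$, so the top-order terms sit exactly at the borderline and must be closed by carefully chosen Sobolev embeddings on the bounded set $\mathcal{C}$ — in particular one is forced to keep $\|\bar{\bv}\|_{H^3_{(\mathcal{C})}}$, not $\|\bar{\bv}\|_{H^2_{(\mathcal{C})}}$, as the multiplier, which is exactly where the hypothesis $\bar{\bv}\in L^1(0,T;H^3_{(\mathcal{C})})$ is needed. A secondary, purely technical, nuisance is the presence of $\partial\mathcal{C}$: the regularization must (at least asymptotically) preserve $\bar{\bv}\cdot\bn=0$ so that the characteristic flow does not leave $\mathcal{C}$ and so that all boundary integrals vanish, which is handled by localizing and flattening the boundary. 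All of this is classical; see \cite{V-1983-ASNSP}.
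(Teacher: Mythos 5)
Your argument is correct and is essentially the standard proof of this lemma; the paper itself gives no proof but simply cites Valli \cite{V-1983-ASNSP}, Lemma 2.3, whose argument is exactly this combination of $D^\alpha$-energy estimates (with the boundary terms killed by $\bar{\bv}\cdot\bn=0$), Gr\"onwall, and construction via characteristics for mollified data. The only cosmetic remark is that the strong continuity of $t\mapsto\|\varrho(t)\|_{H^2_{(\mathcal{C})}}$ follows directly from the fact that your $H^2$-energy balance is an identity with an $L^1(0,T)$ right-hand side, so invoking a DiPerna--Lions renormalization argument there is heavier machinery than needed.
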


\subsection{Linear Magnetic Equation}
When $\bar{\bv}\in W^{1,2}_{(\mathcal{S})}$ is given, we solve the magnetic equation with the slip boundary. (For more information on 3-D MHD with slip boundary, please check \cite{CHPS-23}.)
\begin{equation}\label{linear-magnetic-equation}
\begin{split}
\bb^j_t+(\bo_{\bar{\bv}}\times \bb)^j-(\bo_{\bar{\bv}}\times \bx+{\boldsymbol{\xi}_{\bar{\bv}}})\cdot\nabla_{\bx}\bb^j+\mathrm{div}_{\bx}(\bb^j\bar{\bv}-\bar{\bv}^j\bb)&=-\left(\nabla_\bx \times(\nabla_\bx \times \bb)\right)^j,\hspace{0.2cm} j=1,2,3,\\
\mathrm{div}_{\bx}\bb&=0,\quad \forall (t, \bx)\in (0, T)\times\mathcal{C}, \\
{\bb\cdot\bn}|_{\partial \mathcal C} = 0,\quad {(\nabla_\bx\times \bb)\times \bn}|_{\partial \mathcal C} = 0,\quad \bb|_{t=0}&=\bb_0.
\end{split}
\end{equation}

Since $\eqref{linear-magnetic-equation}$ is a linear parabolic system (see also \cite{KoYa}) one can deduce energy estimates which yield the following lemma.
\begin{lemma}  
Let
\begin{equation*}
\begin{split}
\bar{\bv}|_{\mathcal{C}}\in L^2(0,T; H^{3}_{(\mathcal{C})})\cap C(0,T; H^{2}_{(\mathcal{C})}),\quad \bb_0 \in H^{2}_{(\mathcal{C})},\quad \diver_\bx \bb_0 = 0.
\end{split}
\end{equation*}
Then there exists a unique solution $\bb(t, \bx)$ to $\eqref{linear-magnetic-equation}$ such that 
\begin{equation*}
\begin{split}
&\bb\in L^2(0,T; H^{3}_{(\mathcal{C})})\cap C(0,T; H^{2}_{(\mathcal{C})}),\\
&\bb_t\in L^2(0,T; H^{1}_{(\mathcal{C})})\cap C(0,T; L^2_{(\mathcal{C})}).
\end{split}
\end{equation*}
\end{lemma}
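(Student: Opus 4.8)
The plan is to prove the lemma by a Galerkin scheme built on a divergence-free eigenbasis adapted to the two boundary conditions, to derive a priori estimates up to the $H^3$ level, and to obtain the stated time continuity by interpolation.

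\textbf{Setting up the operator.} Since $\diver_\bx\bb=0$ one has $\nabla_\bx\times(\nabla_\bx\times\bb)=-\Delta_\bx\bb$, so \eqref{linear-magnetic-equation} is a second-order parabolic system for $\bb$ subject to a divergence constraint. I would work in $\mathcal{H}:=\{\bb\in L^2_{(\mathcal{C})}:\diver_\bx\bb=0,\ \bb\cdot\bn=0\text{ on }\partial\mathcal{C}\}$ and consider the curl--curl operator $A\bb:=\nabla_\bx\times(\nabla_\bx\times\bb)$ with domain $D(A):=\{\bb\in\mathcal{H}\cap H^2_{(\mathcal{C})}:(\nabla_\bx\times\bb)\times\bn=0\text{ on }\partial\mathcal{C}\}$. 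Using the simple connectedness of $\mathcal{C}$, the div--curl (Friedrichs) inequality $\|\nabla_\bx\bb\|_{L^2_{(\mathcal{C})}}\le C(\|\nabla_\bx\times\bb\|_{L^2_{(\mathcal{C})}}+\|\bb\|_{L^2_{(\mathcal{C})}})$ valid on $\mathcal{H}$, and elliptic regularity for the curl--curl boundary value problem (here the smoothness of $\partial\mathcal{C}$ is used), $A$ is self-adjoint and nonnegative on $\mathcal{H}$ with compact resolvent; hence its eigenfunctions $\{\bw_k\}_{k\ge1}$ form an orthonormal basis of $\mathcal{H}$, are divergence-free, satisfy both boundary conditions, and belong to $H^4_{(\mathcal{C})}$.

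\textbf{Approximation and a priori estimates.} I would seek $\bb_N(t,\bx)=\sum_{k=1}^N g^N_k(t)\bw_k(\bx)$ solving the Galerkin projection of \eqref{linear-magnetic-equation}; since $\bo_{\bar{\bv}}$ and $\bxi_{\bar{\bv}}$ depend on $t$ only and lie in $C([0,T])$, while $\bar{\bv}|_{\mathcal{C}}$ has the stated regularity, the resulting linear ODE system is globally solvable on $[0,T]$. Testing with $\bb_N$, the curl--curl term produces $\|\nabla_\bx\times\bb_N\|_{L^2_{(\mathcal{C})}}^2$ (its boundary integral vanishes by $(\nabla_\bx\times\bb)\times\bn=0$), and the first-order terms are handled by integration by parts and trace estimates, using that $\bar{\bv}$ coincides with the rigid velocity $\bo_{\bar{\bv}}\times\bx+\bxi_{\bar{\bv}}$ on $\partial\mathcal{C}$ and that $H^2_{(\mathcal{C})}\hookrightarrow L^\infty_{(\mathcal{C})}$; Gronwall then gives $\bb_N$ bounded in $L^\infty(0,T;L^2_{(\mathcal{C})})\cap L^2(0,T;H^1_{(\mathcal{C})})$. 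Testing with $A\bb_N$ and with $A^2\bb_N$, using the elliptic estimate $\|\bb\|_{H^{k+2}_{(\mathcal{C})}}\le C(\|A\bb\|_{H^k_{(\mathcal{C})}}+\|\bb\|_{L^2_{(\mathcal{C})}})$ and treating the commutators of $A$ with the first-order terms as lower-order perturbations (absorbed by Young's inequality, at the cost of coefficients involving $\|\bar{\bv}\|_{H^3_{(\mathcal{C})}}^2\in L^1(0,T)$), and invoking $\bb_0\in H^2_{(\mathcal{C})}$ together with $\bar{\bv}\in L^2(0,T;H^3_{(\mathcal{C})})\cap C(0,T;H^2_{(\mathcal{C})})$, one obtains after Gronwall uniform bounds for $\bb_N$ in $L^\infty(0,T;H^2_{(\mathcal{C})})\cap L^2(0,T;H^3_{(\mathcal{C})})$; reading $\partial_t\bb_N$ off the equation then yields a uniform bound in $L^2(0,T;H^1_{(\mathcal{C})})$.

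\textbf{Limit, uniqueness and time continuity.} Since every term of \eqref{linear-magnetic-equation} is linear in $\bb$, the uniform bounds and weak-$*$ compactness suffice to pass to the limit $N\to\infty$; the limit $\bb$ solves the system, is divergence-free, attains $\bb_0$, satisfies the boundary conditions in the trace sense, and has the asserted regularity. For uniqueness, the difference of two solutions solves the homogeneous problem with zero initial data, to which the first energy estimate applies, giving a Gronwall inequality for $\|\bb^{(1)}-\bb^{(2)}\|_{L^2_{(\mathcal{C})}}^2$ with an $L^1(0,T)$ coefficient, hence $\bb^{(1)}=\bb^{(2)}$. Finally, $\bb\in C(0,T;H^2_{(\mathcal{C})})$ follows from $\bb\in L^2(0,T;H^3_{(\mathcal{C})})$ and $\partial_t\bb\in L^2(0,T;H^1_{(\mathcal{C})})$ by the Lions--Magenes interpolation lemma, and differentiating the equation in time gives $\partial_{tt}\bb\in L^2(0,T;H^{-1}_{(\mathcal{C})})$, so that $\partial_t\bb\in C(0,T;L^2_{(\mathcal{C})})$ in the same way. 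The main obstacle is the functional-analytic handling of the curl--curl operator under the mixed boundary conditions $\bb\cdot\bn=0$, $(\nabla_\bx\times\bb)\times\bn=0$: the div--curl Friedrichs inequality (which genuinely requires $\mathcal{C}$ simply connected), the $H^3$ elliptic regularity for this boundary value problem, and the construction of a Galerkin basis respecting the divergence constraint; the linearly growing coefficient $\bo_{\bar{\bv}}\times\bx$ in the transport term and the cancellation of boundary terms are technically delicate but do not alter the structure of the argument.
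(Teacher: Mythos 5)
Your proposal is a genuinely different route from the paper's. The paper disposes of this lemma in two sentences: existence together with the regularity of $\bb_t$ is quoted from \cite[Lemma 2.2]{FaYu}, and the spatial regularity of $\bb$ is then obtained by moving $\bb_t$ to the right-hand side of \eqref{linear-magnetic-equation} and invoking elliptic regularity for the stationary curl--curl problem with the boundary conditions $\bb\cdot\bn=0$, $(\nabla_\bx\times\bb)\times\bn=0$ (\cite{georgescu}, \cite{KoYa}). You instead construct the solution from scratch by a spectral Galerkin scheme on the eigenbasis of the curl--curl operator $A$ in the space of divergence-free fields with vanishing normal trace. That is a legitimate, self-contained alternative, and your functional-analytic setup (Friedrichs inequality on the simply connected cavity, self-adjointness and compact resolvent of $A$), the basic energy estimate, uniqueness, and the deduction of $\bb\in C(0,T;H^2_{(\mathcal{C})})$ from $\bb\in L^2(0,T;H^3_{(\mathcal{C})})$ and $\bb_t\in L^2(0,T;H^1_{(\mathcal{C})})$ by interpolation are all sound.

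Two steps do not close as written. First, the $L^2(0,T;H^3_{(\mathcal{C})})$ bound via testing with $A^2\bb_N$: to estimate $\langle f, A^2\bb_N\rangle$ with $f$ the transport/stretching/rotation terms, you would need either $f\in D(A)$ or at least $f\in D(A^{1/2})$, i.e.\ $f$ divergence-free with vanishing normal trace, which fails; and the Galerkin projection $P_N$, while it commutes with $A$, is not uniformly bounded on $H^1_{(\mathcal{C})}$ or $H^2_{(\mathcal{C})}$, so "commutators are lower order" does not absorb the boundary obstruction. This is precisely the difficulty that forces the paper, in the proof of its second-order estimate for the induction equation (the lemma containing \eqref{second-order-t}), to flatten the boundary and differentiate only tangentially. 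The clean repair at this linear level is the paper's: once $\bb_t\in L^2(0,T;H^1_{(\mathcal{C})})$ is in hand, read $\bb\in L^2(0,T;H^3_{(\mathcal{C})})$ off the stationary problem $\nabla_\bx\times(\nabla_\bx\times\bb)=-\bb_t+f$ using \cite{georgescu}. Second, your route to $\bb_t\in C(0,T;L^2_{(\mathcal{C})})$ via $\bb_{tt}\in L^2(0,T;H^{-1}_{(\mathcal{C})})$ requires differentiating the coefficients in time, hence some control of $\bar{\bv}_t$, which is not among the hypotheses; it is simpler, and hypothesis-free, to read $\bb_t=-\nabla_\bx\times(\nabla_\bx\times\bb)+f$ directly off the equation once $\bb\in C(0,T;H^2_{(\mathcal{C})})$ and $\bar{\bv}\in C(0,T;H^2_{(\mathcal{C})})$ are known.
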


\begin{proof}
Existence and the regularity of $\bb_t$ can be found as \cite[Lemma 2.2]{FaYu}. Since the time derivative can be then switched to the right hand side, the regularity of $\bb$  can be deduced from the regularity of solutions to the stationary problem -- we refer to \cite{georgescu} (see also \cite{KoYa}).
\end{proof}

\subsection{Linear Momentum System}
For given $\tilde{\rho}(t, \bx)>0$, $\bm{F}\in (W^{1,2}_{(\mathcal{S})})^*$, we define
\begin{equation*}
\tilde{\rho}_{\mathcal{S}}:=\tilde{\rho}_{\mathcal{S}}(t, \bx)=\left\{
\begin{split}
&\tilde{\rho}(t, \bx), \quad \forall \bx \in \mathcal{C},\\
&1, \quad \forall \bx \in \mathcal{B}.
\end{split}
\right.
\end{equation*}
We investigate 
\begin{equation}\label{linear-momentum-system}
\begin{split}
\int_{\mathcal{S}}\tilde{\rho}_{\mathcal{S}}{\bu}_t\cdot\boldsymbol{\phi}\ddx+\int_{\mathcal{S}}S(\nabla_{\bx}\bu):\nabla_{\bx}\boldsymbol{\phi}\ddx&=\int_{\mathcal{S}}\bm{F}\cdot\boldsymbol{\phi}\ddx,\\
\bu|_{t=0}&=\bu_0,\\
\boldsymbol{\xi}_{\bu}&=-\frac{1}{m_{\mathcal{B}}}\int_{\mathcal{C}}\tilde{\rho}\bu\ddx,
\end{split}
\end{equation}
where $\boldsymbol{\phi}\in W^{1,2}_{(\mathcal{S})},$ and $\bu=\bv+\bo_{\bu}\times \bx+\boldsymbol{\xi}_{\bu}$.

The following can be found in \cite[Section 4.2]{V-1983-ASNSP}.
\begin{lemma}
Let $\partial \mathcal{C}$ be of class $C^2$.
Assume the existence of $\bar m>0$ such that
\begin{equation*}
\begin{split}
&\tilde{\rho}\in L^\infty(0, T;L^\infty_{(\mathcal{C})}), \quad \frac{\bar{m}}{2}\leq \tilde {\rho}(t, \bx)\leq 2\bar{m},\quad \forall(t, \bx)\in [0, T]\times\mathcal{C},  \\
&\bm{F}\in L^2(0, T; L^2_{(\mathcal{S})}), \quad \bu_0\in W^{1,2}_{(\mathcal{S})}.
\end{split}
\end{equation*}
(1) There exists a unique solution $\bu(t, \bx)$ to $\eqref{linear-momentum-system}$ such that 
\begin{equation*}
\begin{split}
\bu\in C(0, T; W^{1,2}_{(\mathcal{S})}), \quad \bu|_{\mathcal{C}}\in L^2(0,T; H^{2}_{(\mathcal{C})}), \quad \bu_t\in L^2(0, T; L^{2}_{(\mathcal{S})}).
\end{split}
\end{equation*}
(2) Moreover, we suppose in addition  $\partial \mathcal{C}$ be of class $C^3$ and that
\begin{equation*}
\begin{split}
\nabla_{\bx}\tilde\rho \in L^4(0,T;L^6_{(\mathcal{C})}), \hspace{0.2cm}\tilde\rho_t\in L^2(0,T; L^3_{(\mathcal{C})}). 
\end{split}
\end{equation*}
Assume that
\begin{equation*}
	\begin{split}
		\int_{\mathcal{S}}\bm{F}\cdot \bm{\phi}\ddx=\int_{\mathcal{S}}\bm{F}_1\cdot \bm{\phi}\ddx+\int_{\mathcal{S}}\bm{F}_2:\nabla \bm{\phi}\ddx,
\end{split}
\end{equation*}
where
\begin{equation*}
	\begin{split}
		\bm{F}_1\in L^2(0,T;W^{1,2}_{(\mathcal{S})})\cap L^\infty(0,T;L^2_{(\mathcal{S})}), \quad \bm{F}_2\in L^2(0,T;W^{2,2}_{(\mathcal{S})})\cap L^\infty(0,T;W^{1,2}_{(\mathcal{S}})).
\end{split}
\end{equation*}
Furthermore, we assume that $\bm{F}_t$ can be written as
\begin{equation*}
	\begin{split}
		\int_{\mathcal{S}}\bm{F}_t\cdot \bm{\phi}\ddx=\int_{\mathcal{S}}(\bm{F}_3)_t\cdot \bm{\phi}\ddx+\int_{\mathcal{S}}(\bm{F}_4)_t:\nabla \bm{\phi}\ddx,
\end{split}
\end{equation*}
where
\begin{equation*}
	\begin{split}
		(\bm{F}_3)_t\in L^2(0,T;L^{\frac{6}{5}}_{(\mathcal{S})}), \quad (\bm{F}_4)_t\in L^2(0,T;L^2_{(\mathcal{S})}).
\end{split}
\end{equation*}
Then it holds that 
\begin{equation*}
\begin{split}
& \bu|_{\mathcal{C}}\in L^2(0,T; H^{3}_{(\mathcal{C})})\cap C(0,T; H^{2}_{(\mathcal{C})}),\hspace{0.2cm}\bu_t\in L^2(0,T; W^{1,2}_{(\mathcal{S})})\cap C(0,T; L^2_{(\mathcal{S})}).
\end{split}
\end{equation*}
\end{lemma}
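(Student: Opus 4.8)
\emph{Proof proposal.} Both assertions are the rigid-body counterpart of the linear estimates of A.~Valli \cite[Section~4.2]{V-1983-ASNSP}, the only structural novelty being that the Lam\'e-type operator on $\mathcal{C}$ is now coupled, through the no-slip condition on $\partial\mathcal{C}$ and the relation $\bxi_{\bu}=-m_{\mathcal{B}}^{-1}\int_{\mathcal{C}}\tilde{\rho}\bu\ddx$, to the finite-dimensional rigid dynamics on $\mathcal{B}$. I would run a Faedo--Galerkin scheme in the Gelfand triple $V:=W^{1,2}_{(\mathcal{S})}\hookrightarrow H:=L^2_{(\mathcal{S})}\hookrightarrow V^*$ (with the translational constraint built into $V$), reading \eqref{linear-momentum-system} as $\tilde{\rho}_{\mathcal{S}}\bu_t+\mathcal{A}\bu=\bm{F}$ in $V^*$, where $\langle\mathcal{A}\bu,\boldsymbol{\phi}\rangle:=\int_{\mathcal{S}}S(\nabla_\bx\bu):\nabla_\bx\boldsymbol{\phi}\ddx$. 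Since $\bar m/2\le\tilde{\rho}_{\mathcal{S}}\le 2\bar m$, the Galerkin mass matrix is uniformly positive definite, so the approximations $\bu_N=\sum_{j\le N}c_j(t)\boldsymbol{w}_j$ solve a linear ODE system with $L^\infty$-in-time coefficients and exist on all of $[0,T]$.

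For part (1), the first point is a G\aa rding inequality $\langle\mathcal{A}\bu,\bu\rangle+\lambda_0\|\bu\|_H^2\ge\alpha\|\bu\|_V^2$ on $V$. Writing $\bu=\bv+\bo_{\bu}\times\bx+\bxi_{\bu}$ with $\bv|_{\partial\mathcal{C}}=0$, the identity \eqref{symmetry} together with $\mathcal{D}_\bx(\bo_{\bu}\times\bx)=0$ reduces $\langle\mathcal{A}\bu,\bu\rangle$ to $\int_{\mathcal{C}}S(\nabla_\bx\bv):\nabla_\bx\bv\ddx=\mu\|\nabla_\bx\bv\|_{L^2(\mathcal{C})}^2+(\lambda+\tfrac{\mu}{3})\|\diver_\bx\bv\|_{L^2(\mathcal{C})}^2$ (integration by parts), which controls $\|\bv\|_{W^{1,2}(\mathcal{C})}^2$ by Poincar\'e; the residual kernel (rigid motions of $\mathcal{S}$ compatible with the constraint, a three-dimensional space) is absorbed into $\lambda_0\|\bu\|_H^2$ by a Peetre--Tartar compactness argument using that $(\bo,\bxi)\mapsto\|\bo\times\bx+\bxi\|_{L^2(\mathcal{B})}$ is a norm on $\mathbb{R}^6$ -- this compactness step is precisely the modification of Valli's fixed-domain Korn estimate. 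The a priori bounds come from testing with $(\bu_N)_t$: symmetry of $S$ gives $\int_{\mathcal{S}}S(\nabla_\bx\bu_N):\nabla_\bx(\bu_N)_t\ddx=\tfrac12\tfrac{d}{dt}\langle\mathcal{A}\bu_N,\bu_N\rangle$, so with $\tilde{\rho}_{\mathcal{S}}\ge\bar m/2$ and Young's inequality $\int_0^T\|(\bu_N)_t\|_H^2+\sup_{[0,T]}\langle\mathcal{A}\bu_N,\bu_N\rangle\le C(\|\bu_0\|_V^2+\|\bm{F}\|_{L^2(0,T;L^2)}^2)$ (here $\bu_0\in V$ is used), and a Gronwall estimate on $\tfrac{d}{dt}\int_{\mathcal{S}}\tilde{\rho}_{\mathcal{S}}|\bu_N|^2$ bounds $\bu_N$ in $L^\infty(0,T;H)$, hence by G\aa rding in $L^\infty(0,T;V)$. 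Letting $N\to\infty$ (weak-$*$ compactness, Aubin--Lions for $\bu_N\to\bu$ in $L^2(0,T;H)$) produces a solution with $\bu_t\in L^2(0,T;H)$ and, from the energy identity, $\bu\in C(0,T;V)$. Finally, for a.e.\ $t$ the restriction $\bu(t)|_{\mathcal{C}}$ solves the Lam\'e system $-\mu\Delta_\bx\bu-(\lambda+\tfrac{\mu}{3})\nabla_\bx\diver_\bx\bu=-\tilde{\rho}\bu_t+\bm{F}$ in $\mathcal{C}$ with Dirichlet data $\bo_{\bu}\times\bx+\bxi_{\bu}$ and $L^2(\mathcal{C})$ right-hand side; since $\partial\mathcal{C}\in C^2$, elliptic regularity and squaring in $t$ give $\bu|_{\mathcal{C}}\in L^2(0,T;H^2_{(\mathcal{C})})$. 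Uniqueness follows from linearity by an energy estimate on the difference of two solutions.

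For part (2), differentiate the equation in time (rigorously via difference quotients): $\bw:=\bu_t$ solves $\tilde{\rho}_{\mathcal{S}}\bw_t+\mathcal{A}\bw=\bm{F}_t-(\tilde{\rho}_{\mathcal{S}})_t\bu_t$, with $\bw(0)=\tilde{\rho}_{\mathcal{S}}(0)^{-1}(\bm{F}(0)-\mathcal{A}\bu_0)\in H$ because $\bm{F}(0)=\bm{F}_1(0)+\diver_\bx\bm{F}_2(0)\in L^2_{(\mathcal{S})}$ and $\bu_0|_{\mathcal{C}}\in H^2_{(\mathcal{C})}$ (inherited from Theorem~\ref{local-existence}). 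Testing with $\bw$ and using the structure of $\bm{F}_t$, namely $|\langle\bm{F}_t,\bw\rangle|\le(\|(\bm{F}_3)_t\|_{L^{6/5}}+\|(\bm{F}_4)_t\|_{L^2})\|\bw\|_V$, together with $\int_{\mathcal{C}}|(\tilde{\rho})_t|\,|\bw|^2\le\|\tilde{\rho}_t\|_{L^3}\|\bw\|_{L^2}\|\bw\|_{L^6}\le\epsilon\|\bw\|_V^2+C\|\tilde{\rho}_t\|_{L^3}^2\|\bw\|_{L^2}^2$, and invoking G\aa rding and Gronwall (the coefficient $\|\tilde{\rho}_t\|_{L^3}^2\in L^1(0,T)$), gives $\bu_t\in L^\infty(0,T;L^2_{(\mathcal{S})})\cap L^2(0,T;W^{1,2}_{(\mathcal{S})})$; the right-hand side above being in $L^2(0,T;V^*)$ then yields $\bw_t\in L^2(0,T;V^*)$ and hence $\bu_t\in C(0,T;L^2_{(\mathcal{S})})$. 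Feeding this back into the elliptic problem on $\mathcal{C}$ (right-hand side $-\tilde{\rho}\bu_t+\bm{F}_1+\diver_\bx\bm{F}_2$), I would check that $\nabla_\bx(\tilde{\rho}\bu_t)=\tilde{\rho}\nabla_\bx\bu_t+\bu_t\nabla_\bx\tilde{\rho}\in L^2(0,T;L^2_{(\mathcal{C})})$ -- the first term directly, the second from $\nabla_\bx\tilde{\rho}\in L^4(0,T;L^6)$ and the interpolation $\bu_t\in L^4(0,T;L^3)$ (out of $\bu_t\in L^\infty(0,T;L^2)\cap L^2(0,T;L^6)$) -- and that $\bm{F}_1\in L^2(0,T;W^{1,2})$, $\bm{F}_2\in L^2(0,T;W^{2,2})$; with $\partial\mathcal{C}\in C^3$, elliptic regularity then gives $\bu|_{\mathcal{C}}\in L^2(0,T;H^3_{(\mathcal{C})})$, and the same estimate together with the now-available time continuity of the data gives $\bu|_{\mathcal{C}}\in C(0,T;H^2_{(\mathcal{C})})$.

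The main obstacles are, for part (1), the G\aa rding inequality, where the no-slip coupling and the three residual kernel directions force the compactness argument in place of a direct Korn estimate; and, for part (2), the bookkeeping of the low-regularity coefficients -- the hypotheses $\nabla_\bx\tilde{\rho}\in L^4(0,T;L^6)$ and $\tilde{\rho}_t\in L^2(0,T;L^3)$ are exactly what makes both the parabolic energy estimate for $\bu_t$ and the elliptic bootstrap to $H^3$ close, and verifying this requires the interpolation inequalities for $\bu_t$ indicated above.
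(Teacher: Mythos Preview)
Your proposal is correct and follows essentially the same approach the paper invokes: the paper does not give a proof of this lemma at all but simply cites \cite[Section~4.2]{V-1983-ASNSP}, so you have in effect reconstructed (with the necessary rigid-body modifications) what Valli does there. Your identification of the structural novelty---that the Lam\'e operator on $\mathcal{C}$ is coupled to the finite-dimensional rigid dynamics, forcing the Peetre--Tartar/compactness step in the G\aa rding inequality in place of a direct Korn estimate---and your bookkeeping for part~(2) (the roles of $\nabla_\bx\tilde\rho\in L^4(0,T;L^6)$ and $\tilde\rho_t\in L^2(0,T;L^3)$ in closing both the parabolic estimate for $\bu_t$ and the elliptic bootstrap to $H^3$) are exactly the points one has to supply when transplanting Valli's argument to this setting; the paper leaves all of this implicit in the citation.
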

With the above properties in hand, we use Schauder fixed-point argument to prove the Theorem $\ref{local-existence}$. Its proof is similar to section 4.3 in \cite{GMN-2019-ARMA}, and therefore we omit it.

\section{Global Existence of Strong Solutions to the Coupled System}

The following section is divided into several parts where we derive estimates needed for the global existence of strong solutions.

Assume that 
\begin{equation}\label{bound-density-1}
\frac{\bar{\rho}}{4}\leq \rho\leq 3\bar{\rho}.
\end{equation}

We multiply $\eqref{re-system-mass}_1$ by $\frac{p'(\bar{\rho})}{\bar{\rho}}\varrho$ and integrate the resulting equation over $\mathcal{C}$. Consequently,
\begin{equation}\label{l-2-rho}
\begin{split}
\frac{1}{2}\frac{p'(\bar{\rho})}{\bar{\rho}}\frac{\mathrm{d}}{\mathrm{d}t}\int_{\mathcal{C}}\varrho^2\ddx +p'(\bar{\rho})\int_{\mathcal{C}}\varrho \mathrm{div}_{\bx}\bv\ddx=-\frac{1}{2}\frac{p'(\bar{\rho})}{\bar{\rho}}\int_{\mathcal{C}}\varrho^2\mathrm{div}_{\bx}\bv\ddx.
\end{split}
\end{equation}

We choose $\boldsymbol{\phi}=\bu$ in \eqref{re-system-momentum} to derive that 
\begin{multline}\label{l-2-u}
\int_{\mathcal{S}}\rho_{\mathcal{S}}\bu_t\cdot \bu\ddx +\int_{\mathcal{C}}S(\nabla_{\bx}\bv):\nabla_{\bx}\bv\ddx-\int_{\mathcal{C}}p'(\bar{\rho})\rho\mathrm{div}_{\bx}\bv\ddx\\
=-\int_{\mathcal{S}}\rho_{\mathcal{S}}\bv \cdot\nabla_{\bx}\bu\cdot \bu\ddx+\int_{\mathcal{C}}(p'(\bar{\rho})-p'({\rho}))\nabla_{\bx}\varrho\cdot\bu\ddx-\int_{\mathcal{C}}\left(\bb\otimes\bb-\frac{1}{2}|\bb|^2\mathbb{I}\right):\nabla_{\bx}\bu\ddx.
\end{multline}

We multiply $\eqref{re-system-induction}_1$ by $\bb$, integrate the resulting equation over $\mathcal{C}$ then use integration by parts and definition of relative velocity $\bv$ to deduce that 
\begin{equation}\label{l-2-b}
\begin{split}
\frac{1}{2}\frac{\mathrm{d}}{{\mathrm{d}t}}\int_{\mathcal{C}}|\bb|^2\ddx+\int_{\mathcal{C}}|\nabla_{\bx}\times \bb|^2\ddx=-\int_{\mathcal{C}}(\bv\cdot\nabla_{\bx}\bb+\bb\mathrm{div}_{\bx}\bu-\bb\cdot\nabla_{\bx}\bu)\cdot\bb\ddx.
\end{split}
\end{equation}

Further, \eqref{l-2-rho}, \eqref{l-2-u} and \eqref{l-2-b}, integration by parts, and continuity equation \eqref{mhd-x}$_1$ yield
\begin{multline}\label{zero-order-1}
\frac{1}{2}\frac{p'(\bar{\rho})}{\bar{\rho}}\frac{\mathrm{d}}{\mathrm{d}t}\int_{\mathcal{C}}\varrho^2\ddx+
\frac{1}{2}\frac{\mathrm{d}}{\mathrm{d}t}\int_{\mathcal{S}}\rho_{\mathcal{S}}|\bu|^2\ddx+
\frac{1}{2}\frac{\mathrm{d}}{{\mathrm{d}t}}\int_{\mathcal{C}}|\bb|^2\ddx+\int_{\mathcal{C}}S(\nabla_{\bx}\bv):\nabla_{\bx}\bv\ddx\\
+\int_{\mathcal{C}}|\nabla_{\bx}\times\bb|^2\ddx
=-\frac{1}{2}\frac{p'(\bar{\rho})}{\bar{\rho}}\int_{\mathcal{C}}\mathrm{div}_{\bx}\bv\varrho^2\ddx+\int_{\mathcal{C}}(p'(\bar{\rho})-p'({\rho}))\nabla_{\bx}\varrho\cdot\bu\ddx\\
 -\int_{\mathcal{C}}\left(\bb\otimes\bb-\frac{1}{2}|\bb|^2\mathbb{I}\right):\nabla_{\bx}\bu\ddx 
 -\int_{\mathcal{C}}(\bv\cdot\nabla_{\bx}\bb+\bb\mathrm{div}_{\bx}\bu-\bb\cdot\nabla_{\bx}\bu)\cdot\bb\ddx
=:\sum_{i=1}^4\mathcal{I}_{1,i}.
\end{multline}

By Sobolev's inequality, it holds 
\begin{equation*}
\mathcal{I}_{1,1}\leq  C\norm{\mathrm{div}_{\bx}\bv}_{L^\infty_{(\mathcal{C})}}\norm{\varrho}^2_{L^2_{(\mathcal{C})}} \leq \epsilon\norm{\bv}_{H^3_{(\mathcal{C})}}^2+C\norm{\varrho}^4_{L^2_{(\mathcal{C})}}.
\end{equation*}

It follows from $\eqref{re-system}$, H\"older's inequality and $\eqref{bound-density-1}$ that 
\begin{equation}\label{xi-u}
\begin{split}
|\boldsymbol{\xi}_{\bu}|\leq C\norm{\bu}_{L^2_{(\mathcal{C})}}.
\end{split}
\end{equation}

Based on  $\bu=\bv+\bo_{\bu}\times \bx +\boldsymbol{\xi}_{\bu}$ and the boundedness of $\mathcal{C}$, we further have 
\begin{multline}\label{bo}
|\bo_{\bu}|\sim \norm{\bo_{\bu}\times \bx}_{L^2_{(\mathcal{C})}}\leq C(\norm{\bu}_{L^2_{(\mathcal{C})}}+\norm{\bv}_{L^2_{(\mathcal{C})}}+\norm{\boldsymbol{\xi}_{\bu}}_{L^2_{(\mathcal{C})}})\\
\leq C(\norm{\bu}_{L^2_{(\mathcal{C})}}+\norm{\bv}_{L^2_{(\mathcal{C})}}+|\boldsymbol{\xi}_{\bu}|)\leq C(\norm{\bu}_{L^2_{(\mathcal{C})}}+\norm{\bv}_{L^2_{(\mathcal{C})}}).
\end{multline}

 H\"older's inequality  and $\eqref{bound-density-1}$ give
\begin{equation*}
\begin{split}
\mathcal{I}_{1,2}&\leq  \epsilon \norm{{\bu}}^2_{L^2_{(\mathcal{C})}}
+C\norm{{\nabla_{\bx}\varrho}}^2_{L^2_{(\mathcal{C})}}\norm{{\varrho}}^2_{L^\infty_{(\mathcal{C})}}.
\end{split}
\end{equation*}

By direct calculation and $\eqref{symmetry}$, then integration by parts gives that 
\begin{multline*}
\mathcal{I}_{1,3}+\mathcal{I}_{1,4}\\
=-\int_{\mathcal{C}}\left(\bb\otimes\bb-\frac{1}{2}|\bb|^2\mathbb{I}\right):\nabla_{\bx}\bu\ddx-\int_{\mathcal{C}}\bv\cdot\nabla_{\bx}\bb\cdot\bb\ddx+\int_{\mathcal{C}}(\bb\otimes\bb-|\bb|^2\mathbb{I}):\nabla_{\bx}\bu\ddx\\
=-\int_{\mathcal{C}}\bv\cdot\nabla_{\bx}\bb\cdot\bb\ddx-\frac{1}{2}\int_{\mathcal{C}}(|\bb|^2\mathbb{I}):\nabla_{\bx}\bv\ddx=0.
\end{multline*}

We plug the above estimates into $\eqref{zero-order-1}$. Since $\|\nabla_\bx\times \bb\|_{L^2_{(\mathcal{C})}}\sim \|\nabla_\bx \bb\|_{L^2_{(\mathcal{C})}}$ (\cite{KoYa}), we deduce that 
\begin{multline}\label{zero-order}
\frac{1}{2}\frac{p'(\bar{\rho})}{\bar{\rho}}\frac{\mathrm{d}}{\mathrm{d}t}\int_{\mathcal{C}}\varrho^2\ddx+
\frac{1}{2}\frac{\mathrm{d}}{\mathrm{d}t}\int_{\mathcal{S}}\rho_{\mathcal{S}}|\bu|^2\ddx+
\frac{1}{2}\frac{\mathrm{d}}{{\mathrm{d}t}}\int_{\mathcal{C}}|\bb|^2\ddx\\
+\int_{\mathcal{C}}S(\nabla_{\bx}\bv):\nabla_{\bx}\bv\ddx
+\int_{\mathcal{C}}|\nabla_{\bx} \bb|^2\ddx
\\
\leq  \epsilon (\norm{{{\bv}}}^2_{H^3_{(\mathcal{C})}}+\norm{{{\bu}}}^2_{L^2_{(\mathcal{C})}})
+C(\norm{\varrho}^4_{L^2_{(\mathcal{C})}}+\norm{{\nabla_{\bx}\varrho}}^2_{L^2_{(\mathcal{C})}}\norm{{\varrho}}^2_{L^\infty_{(\mathcal{C})}}).
\end{multline}

We apply $\partial_t$ on $\eqref{re-system-mass}_1$, multiply by $\frac{p'(\bar{\rho})}{\bar{\rho}}\varrho_t$, then integrate the resulting equation over $\mathcal{C}$ to get
\begin{multline}\label{varrho-l-2-t}
\frac{1}{2}\frac{p'(\bar{\rho})}{\bar{\rho}}\frac{\mathrm{d}}{\mathrm{d}t}\int_{\mathcal{C}}\varrho_t^2\ddx +p'(\bar{\rho})\int_{\mathcal{C}}\varrho_t \mathrm{div}_{\bx}\bv_t\ddx\\
=-\frac{p'(\bar{\rho})}{\bar{\rho}}\int_{\mathcal{C}}\varrho_t\bv_t\cdot\nabla_{\bx}\varrho\ddx-\frac{1}{2}\frac{p'(\bar{\rho})}{\bar{\rho}}\int_{\mathcal{C}}\varrho_t^2\mathrm{div}_{\bx}\bv\ddx-\frac{p'(\bar{\rho})}{\bar{\rho}}\int_{\mathcal{C}}\varrho_t\varrho\mathrm{div}_{\bx}\bv_t\ddx
:=\sum_{i=1}^{3}\mathcal{I}_{2,i}.
\end{multline}

Using H\"older's inequality and Sobolev's inequality, it gives  
\begin{equation*}
\mathcal{I}_{2,1}\leq \norm{\bv_t}_{L^6_{(\mathcal{C})}}\norm{\varrho_t}_{L^2_{(\mathcal{C})}}\norm{\nabla_{\bx}\varrho}_{L^3_{(\mathcal{C})}}\leq \epsilon\norm{\nabla_{\bx}\bv_t}^2_{L^2_{(\mathcal{C})}}+C(\norm{\varrho_t}^4_{L^2_{(\mathcal{C})}}+\norm{\nabla_{\bx}\varrho}^4_{H^1_{(\mathcal{C})}}).
\end{equation*}
Similarly
\begin{equation*}
\mathcal{I}_{2,2}\leq \norm{\varrho_t}^2_{L^2_{(\mathcal{C})}}\norm{\mathrm{div}_{\bx}\bv}_{L^\infty_{(\mathcal{C})}}\leq \norm{\varrho_t}^2_{L^2_{(\mathcal{C})}}\norm{\mathrm{div}_{\bx}\bv}_{H^2_{(\mathcal{C})}}
\leq \epsilon \norm{\bv}^2_{H^3_{(\mathcal{C})}}+C\norm{\varrho_t}^4_{L^2_{(\mathcal{C})}}, 
\end{equation*}
and 
\begin{equation*}
\mathcal{I}_{2,3}\leq \norm{\varrho_t}_{L^2_{(\mathcal{C})}}\norm{\varrho}_{L^\infty_{(\mathcal{C})}}\norm{\mathrm{div}_{\bx}\bv_t}_{L^2_{(\mathcal{C})}}\leq \epsilon\norm{\varrho_t}^2_{L^2_{(\mathcal{C})}}+C\norm{\varrho}^2_{H^2_{(\mathcal{C})}}\norm{\bv_t}^2_{H^1_{(\mathcal{C})}}.
\end{equation*}

We put the above estimates into $\eqref{varrho-l-2-t}$ to get
\begin{multline}\label{varrho-l-2-t-r}
\frac{1}{2}\frac{p'(\bar{\rho})}{\bar{\rho}}\frac{\mathrm{d}}{\mathrm{d}t}\int_{\mathcal{C}}\varrho_t^2\ddx+p'(\bar{\rho})\int_{\mathcal{C}}\varrho_t \mathrm{div}_{\bx}\bv_t\ddx\\
\leq \epsilon(\norm{\nabla_{\bx}\bv_t}^2_{L^2_{(\mathcal{C})}}+\norm{\bv}^2_{H^3_{(\mathcal{C})}}+\norm{\varrho_t}^2_{L^2_{(\mathcal{C})}}) +C(\norm{\nabla_{\bx}\varrho}^4_{H^1_{(\mathcal{C})}}+\norm{\varrho_t}^4_{L^2_{(\mathcal{C})}}+\norm{\varrho}^2_{H^2_{(\mathcal{C})}}\norm{\bv_t}^2_{H^1_{(\mathcal{C})}}).
\end{multline}

We differentiate  \eqref{re-system-momentum} with respect to $t$  and then multiply it by  $\boldsymbol{\phi}=\bu_t$. The resulting equations is 
\begin{multline}\label{l-2-u-t}
\frac{1}{2}\frac{\mathrm{d}}{\mathrm{d}t}\int_{\mathcal{S}}\rho_{\mathcal{S}}|\bu_t|^2\ddx+\int_{\mathcal{C}}S(\nabla_{\bx}\bv_t):\nabla_{\bx}\bv_t\ddx-p'(\bar{\rho})\int_{\mathcal{C}}\rho_t\mathrm{div}_{\bx}\bv_t\ddx\\
=-\int_{\mathcal{C}}\rho_t\bo_{\bu}\times\bu\cdot\bu_t\ddx-\int_{\mathcal{S}}\rho_{\mathcal{S}}(\bo_{\bu})_t\times\bu\cdot\bu_t\ddx-\int_{\mathcal{C}}\rho_t\bv \cdot\nabla_{\bx}\bu\cdot \bu_t\ddx\\
 -\int_{\mathcal{C}}\rho\bv_t \cdot\nabla_{\bx}\bu\cdot \bu_t\ddx-2\int_{\mathcal{C}}\rho\bv \cdot\nabla_{\bx}\bu_t\cdot \bu_t\ddx-\int_{\mathcal{C}}p'({\rho})_t\nabla_{\bx}\varrho\cdot\bu_t\ddx\\
+\int_{\mathcal{C}}(p'(\bar{\rho})-p'({\rho}))\nabla_{\bx}\varrho_t\cdot\bu_t\ddx-\int_{\mathcal{C}}\left(\bb\otimes\bb-\frac{1}{2}|\bb|^2\mathbb{I}\right)_t:\nabla_{\bx}\bu_t\ddx
=:\sum_{i=1}^{8}\mathcal{I}_{3,i}.
\end{multline}

Because  $|\bo_{\bu}|\sim\norm{\bo_{\bu}\times \bx}_{L^\infty_{(\mathcal{C})}}\sim \norm{\bo_{\bu}\times \bx}_{L^2_{(\mathcal{C})}}$ and $\bu=\bv+\bo_{\bu}\times\bx+\bxi_{\bu}$, we use \eqref{xi-u} and \eqref{bo} to get 
\begin{multline}\label{u-l-infty-2}
\norm{{\bu}}_{L^\infty_{(\mathcal{C})}}\leq C( \norm{{\bv}}_{L^\infty_{(\mathcal{C})}}+\norm{\bo_{\bu}\times \bx}_{L^\infty_{(\mathcal{C})}}+\norm{{\boldsymbol{\xi}_{\bu}}}_{L^\infty_{(\mathcal{C})}})\\
\leq C(\norm{{\bv}}_{L^\infty_{(\mathcal{C})}}+|\bo_{\bu}|+|{{\boldsymbol{\xi}_{\bu}}}|)\leq C(\norm{{\bv}}_{L^\infty_{(\mathcal{C})}}+\norm{{\bu}}_{L^2_{(\mathcal{C})}}+\norm{{\bv}}_{L^2_{(\mathcal{C})}}).
\end{multline}

On account of  $\eqref{bo}$ and H\"older's inequality, it holds that 
\begin{multline*}
\mathcal{I}_{3,1}\leq \epsilon \norm{\bu_t}^2_{L^2_{(\mathcal{C})}}+C\norm{\varrho_t}^2_{L^2_{(\mathcal{C})}}\norm{\bu}^2_{L^\infty_{(\mathcal{C})}}\norm{\bo_{\bu}}^2_{L^\infty_{(\mathcal{C})}}\leq  \epsilon \norm{\bu_t}^2_{L^2_{(\mathcal{C})}}+C\norm{\varrho_t}^2_{L^2_{(\mathcal{C})}}\norm{\bu}^2_{L^\infty_{(\mathcal{C})}}(\norm{{\bu}}^2_{L^2_{(\mathcal{C})}}+\norm{{\bv}}^2_{L^2_{(\mathcal{C})}})\\
\leq  \epsilon \norm{\bu_t}^2_{L^2_{(\mathcal{C})}}+C\norm{\varrho_t}^2_{L^2_{(\mathcal{C})}}(\norm{\bv}^2_{L^\infty_{(\mathcal{C})}}+\norm{\bu}^2_{L^2_{(\mathcal{C})}}+\norm{\bv}^2_{L^2_{(\mathcal{C})}})(\norm{{\bu}}^2_{L^2_{(\mathcal{C})}}+\norm{{\bv}}^2_{L^2_{(\mathcal{C})}}).
\end{multline*}

 It holds that $\bu|_{\mathcal{B}}=(\bo_{\bu}\times\bx)|_{\mathcal{B}}+\boldsymbol{\xi}_{\bu}|_{\mathcal{B}}$, and thus
\begin{equation}\label{u-B-C}
\norm{\bu}^2_{L^2_{(\mathcal{B})}}\leq \norm{\bo_{\bu}\times\bx}^2_{L^2_{(\mathcal{B})}}+|\mathrm{vol}(\mathcal{B})|^2|\boldsymbol{\xi_{\bu}}|^2
\leq C(|\bo_{\bu}|^2+|\boldsymbol{\xi_{\bu}}|^2)\leq C(\norm{\bu}^2_{L^2_{(\mathcal{C})}}+\norm{\bv}^2_{L^2_{(\mathcal{C})}}).
\end{equation}

Next, $\eqref{re-system-mass}$ and $\eqref{re-system}$ yield
\begin{multline*}
\frac{\mathrm{d}}{\mathrm{d}t}\boldsymbol{\xi_{\bu}}=-\frac{1}{m_{\mathcal{B}}}\int_{\mathcal{C}}(\rho_t\bu-\rho\bu_t)\ddx=-\frac{1}{m_{\mathcal{B}}}\int_{\mathcal{C}}(\rho\bv\cdot\nabla_{\bx}\bu-\rho\bu_t)\ddx\\
=-\frac{1}{m_{\mathcal{B}}}\int_{\mathcal{C}}(\rho\bv\cdot\nabla_{\bx}(\bv+\bo_{\bu}\times\bx+\boldsymbol{\xi}_{\bu})-\rho\bu_t)\ddx.
\end{multline*}

Due to H\"older's inequality, \eqref{xi-u} and \eqref{bo}, it also holds that 
\begin{equation}\label{xi-u-t-1}
\begin{split}
|(\boldsymbol{\xi_{\bu}})_t|\leq C\norm{\bv}_{L^2_{(\mathcal{C})}}(\norm{\nabla_{\bx}\bv}_{L^2_{(\mathcal{C})}}+\norm{\bu}_{L^2_{(\mathcal{C})}}+\norm{\bv}_{L^2_{(\mathcal{C})}})+C\norm{\bu_t}_{L^2_{(\mathcal{C})}}.
\end{split}
\end{equation}

We infer from
\begin{equation*}
\begin{split}
\frac{\mathrm{d}}{\mathrm{d}t}\bo_{\bu}\times\bx=\bu_t-\bv_t-(\boldsymbol{\xi_{\bu}})_t,
\end{split}
\end{equation*}
 and $\eqref{xi-u-t-1}$ that
\begin{multline}\label{bo-t}
|(\bo_{\bu})_t|\sim\norm{\frac{\mathrm{d}}{\mathrm{d}t}\bo_{\bu}\times\bx}_{L^2_{(\mathcal{C})}}\\ 
\leq \norm{\bv_t}_{L^2_{(\mathcal{C})}}+C\norm{\bv}_{L^2_{(\mathcal{C})}}(\norm{\nabla_{\bx}\bv}_{L^2_{(\mathcal{C})}}+\norm{\bu}_{L^2_{(\mathcal{C})}}+\norm{\bv}_{L^2_{(\mathcal{C})}})+C\norm{\bu_t}_{L^2_{(\mathcal{C})}}.
\end{multline}
Moreover, we apply  $\eqref{bo-t}$ and $\eqref{xi-u-t-1}$ to get 
\begin{multline}\label{u-b}
\norm{\bu_t}^2_{L^2_{(\mathcal{B})}}\leq  \norm{(\bo_{\bu})_t\times\bx}^2_{L^2_{(\mathcal{B})}}+|\mathrm{vol}(\mathcal{B})|^2|(\boldsymbol{\xi}_{\bu})_t|^2\leq C(|(\bo_{\bu})_t|^2+|(\bxi_{\bu})_t|^2)\\
\leq C\norm{\bv_t}^2_{L^2_{(\mathcal{C})}}+C\norm{\bu_t}^2_{L^2_{(\mathcal{C})}}+C\norm{\bv}^2_{L^2_{(\mathcal{C})}}(\norm{\nabla_{\bx}\bv}^2_{L^2_{(\mathcal{C})}}+\norm{\bu}^2_{L^2_{(\mathcal{C})}}+\norm{\bv}^2_{L^2_{(\mathcal{C})}}).
\end{multline}

Using $\eqref{bound-density-1}$, $\eqref{u-B-C}$, $\eqref{bo-t}$, and $\eqref{u-b}$, we deduce that 
\begin{multline*}
\mathcal{I}_{3,2}\leq \epsilon\|\bu_t\|^2_{L^2_{(\mathcal{S})}}+C\norm{\bu}^2_{L^2_{(\mathcal{S})}}|(\bo_{\bu})_t|^2
\leq \epsilon(\|\bu_t\|^2_{L^2_{(\mathcal{C})}}+\|\bu_t\|^2_{L^2_{(\mathcal{B})}})+C(\norm{\bu}^2_{L^2_{(\mathcal{C})}}+\norm{\bu}^2_{L^2_{(\mathcal{B})}})|(\bo_{\bu})_t|^2\\
\leq \epsilon(\|\bu_t\|^2_{L^2_{(\mathcal{C})}}+\|\bv_t\|^2_{L^2_{(\mathcal{C})}})+C\norm{\bv}^2_{L^2_{(\mathcal{C})}}(\norm{\nabla_{\bx}\bv}^2_{L^2_{(\mathcal{C})}}+\norm{\bu}^2_{L^2_{(\mathcal{C})}}+\norm{\bv}^2_{L^2_{(\mathcal{C})}})\\
+C(\norm{\bu}^2_{L^2_{(\mathcal{C})}}+\norm{\bv}^2_{L^2_{(\mathcal{C})}})(\norm{\bv_t}^2_{L^2_{(\mathcal{C})}}+\norm{\bu_t}^2_{L^2_{(\mathcal{C})}}+\norm{\bv}^2_{L^2_{(\mathcal{C})}}(\norm{\nabla_{\bx}\bv}^2_{L^2_{(\mathcal{C})}}+\norm{\bu}^2_{L^2_{(\mathcal{C})}}+\norm{\bv}^2_{L^2_{(\mathcal{C})}})). 
\end{multline*}

It follows from $\eqref{bo}$ and H\"older's inequality that 
\begin{multline*}
\mathcal{I}_{3,3}\leq \epsilon \norm{\bu_t}^2_{L^2_{(\mathcal{C})}}+C\norm{\varrho_t}^2_{L^2_{(\mathcal{C})}}\norm{\bv}^2_{L^\infty_{(\mathcal{C})}}(\|\nabla_{\bx}\bv\|^2_{L^\infty_{(\mathcal{C})}}+|\bo_{\bu}|^2)\\
\leq \epsilon \norm{\bu_t}^2_{L^2_{(\mathcal{C})}}+C\norm{\varrho_t}^2_{L^2_{(\mathcal{C})}}\norm{\bv}^2_{L^\infty_{(\mathcal{C})}}(\|\nabla_{\bx}\bv\|_{L^\infty_{(\mathcal{C})}}^2+\norm{\bu}^2_{L^2_{(\mathcal{C})}}+\norm{\bv}^2_{L^2_{(\mathcal{C})}}),
\end{multline*}
and 
\begin{multline*}
\mathcal{I}_{3,4}\leq \epsilon \norm{\bu_t}^2_{L^2_{(\mathcal{C})}}+C\norm{\bv_t}^2_{L^2_{(\mathcal{C})}}\norm{\nabla_{\bx}\bu}^2_{L^\infty_{(\mathcal{C})}}\leq \epsilon \norm{\bu_t}^2_{L^2_{(\mathcal{C})}}+C\norm{\bv_t}^2_{L^2_{(\mathcal{C})}}(\norm{\nabla_{\bx}\bv}^2_{L^\infty_{(\mathcal{C})}}+|\bo_{\bu}|^2)\\
\leq \epsilon \norm{\bu_t}^2_{L^2_{(\mathcal{C})}}+C\norm{\bv_t}^2_{L^2_{(\mathcal{C})}}(\norm{\bv}^2_{H^3_{(\mathcal{C})}}+\norm{\bu}^2_{L^2_{(\mathcal{C})}}). 
\end{multline*}

By $\eqref{bo-t}$ and H\"older's inequality, we get 
\begin{multline*}
\mathcal{I}_{3,5}\leq C\int_{\mathcal{C}}\rho|\bv|(|\nabla_{\bx}\bv_t|+|(\bo_{\bu})_t|)|\bu_t|\ddx\\
\leq  \epsilon (\norm{\nabla_{\bx}\bv_t}^2_{L^2_{(\mathcal{C})}}+\norm{\bu_t}^2_{L^2_{(\mathcal{C})}})
+C(\norm{\bu_t}^2_{L^2_{(\mathcal{C})}}\norm{\bv}_{L^\infty_{(\mathcal{C})}}+\norm{\bv}^2_{L^2_{(\mathcal{C})}}|(\bo_{\bu})_t|^2)\\
\leq  \epsilon (\norm{\nabla_{\bx}\bv_t}^2_{L^2_{(\mathcal{C})}}+\norm{\bu_t}^2_{L^2_{(\mathcal{C})}})
+C\norm{\bu_t}^2_{L^2_{(\mathcal{C})}}\norm{\bv}^2_{H^2_{(\mathcal{C})}}\\
\quad +C\norm{\bv}^2_{L^2_{(\mathcal{C})}}(\norm{\bv_t}^2_{L^2_{(\mathcal{C})}}+\norm{\bu_t}^2_{L^2_{(\mathcal{C})}}+\norm{\bv}^2_{L^2_{(\mathcal{C})}}(\norm{\bu}^2_{L^2_{(\mathcal{C})}}+\norm{\bv}^2_{H^1_{(\mathcal{C})}})).
\end{multline*}
Furthermore, it follows from H\"older's  and Sobolev's inequality that 
\begin{multline*}
\mathcal{I}_{3,6}+\mathcal{I}_{3,7}\leq \epsilon \norm{\bu_t}^2_{L^2_{(\mathcal{C})}}+C\norm{\varrho_t}^2_{L^4_{(\mathcal{C})}}
\norm{\nabla_{\bx}\varrho}^2_{L^4_{(\mathcal{C})}}+\norm{\varrho}^2_{L^\infty_{(\mathcal{C})}}\norm{\nabla_{\bx}\varrho_t}^2_{L^2_{(\mathcal{C})}}\\
\leq \epsilon \norm{\bu_t}^2_{L^2_{(\mathcal{C})}}+
C\norm{\varrho_t}^2_{H^1_{(\mathcal{C})}}
\norm{\varrho}^2_{H^2_{(\mathcal{C})}} 
\end{multline*}
and 
\begin{equation*}
\begin{split}
\mathcal{I}_{3,8}&\leq \epsilon\norm{\nabla_{\bx}\bv_t}^2_{L^2_{(\mathcal{C})}}+C\norm{\bb}^2_{L^\infty_{(\mathcal{C})}}\norm{\bb_t}^2_{L^2_{(\mathcal{C})}}.
\end{split}
\end{equation*}

 We finally insert above estimates into $\eqref{l-2-u-t}$ to derive that 
\begin{equation}\label{u-t-l-2-r}
\begin{split}  
&\frac{\mathrm{d}}{\mathrm{d}t}\int_{\mathcal{S}}\rho_{\mathcal{S}}|\bu_t|^2+\int_{\mathcal{C}}S(\nabla_{\bx}\bv_t):\nabla_{\bx}\bv_t-p'(\bar{\rho})\int_{\mathcal{C}}\varrho_t\mathrm{div}_{\bx}\bv_t\\
&\leq \epsilon(\|\bu_t\|^2_{L^2_{(\mathcal{C})}}+\|\bv_t\|^2_{H^1_{(\mathcal{C})}})+C\norm{\varrho_t}^2_{L^2_{(\mathcal{C})}}(\norm{\bv}^2_{L^\infty_{(\mathcal{C})}}+\norm{\bu}^2_{L^2_{(\mathcal{C})}}+\norm{\bv}^2_{L^2_{(\mathcal{C})}})(\norm{{\bu}}^2_{L^2_{(\mathcal{C})}}+\norm{{\bv}}^2_{L^2_{(\mathcal{C})}})\\
&\quad+C\norm{\bv}^2_{L^2_{(\mathcal{C})}}(\norm{\nabla_{\bx}\bv}^2_{L^2_{(\mathcal{C})}}+\norm{\bu}^2_{L^2_{(\mathcal{C})}}+\norm{\bv}^2_{L^2_{(\mathcal{C})}})\\
& \quad +C(\norm{\bu}^2_{L^2_{(\mathcal{C})}}+\norm{\bv}^2_{L^2_{(\mathcal{C})}})(\norm{\bv_t}^2_{L^2_{(\mathcal{C})}}+\norm{\bu_t}^2_{L^2_{(\mathcal{C})}}+\norm{\bv}^2_{L^2_{(\mathcal{C})}}(\norm{\nabla_{\bx}\bv}^2_{L^2_{(\mathcal{C})}}+\norm{\bu}^2_{L^2_{(\mathcal{C})}}+\norm{\bv}^2_{L^2_{(\mathcal{C})}}))
\\
&
\quad+C\norm{\varrho_t}^2_{L^2_{(\mathcal{C})}}\norm{\bv}^2_{L^\infty_{(\mathcal{C})}}(\|\nabla_{\bx}\bv\|_{L^\infty_{(\mathcal{C})}}^2+\norm{\bu}^2_{L^2_{(\mathcal{C})}}+\norm{\bv}^2_{L^2_{(\mathcal{C})}})
\\
&\quad +C\norm{\bv_t}^2_{L^2_{(\mathcal{C})}}\norm{\bv}^2_{H^3_{(\mathcal{C})}}+C\norm{\bv}^2_{H^2_{(\mathcal{C})}}
\norm{\bu_t}^2_{L^2_{(\mathcal{C})}} +C\norm{\varrho_t}^2_{H^1_{(\mathcal{C})}}
\norm{\varrho}^2_{H^2_{(\mathcal{C})}}+C\norm{\bb}^2_{L^\infty_{(\mathcal{C})}}\norm{\bb_t}^2_{L^2_{(\mathcal{C})}}.
\end{split}
\end{equation}

Next, we apply $\partial_t$ on $\eqref{re-system-induction}_1$ and multiply by $\bb_t$. Straightforward calculations lead to 
\begin{multline}\label{b-t-l-2}
\frac{1}{2}\frac{\mathrm{d}}{\mathrm{d}t}\int_{\mathcal{C}}|\bb_t|^2\ddx+C\int_{\mathcal{C}}|\nabla_{\bx}\bb_t|^2\ddx\\
=-\int_{\mathcal{C}}(\bo_{\bu})_t\times\bb\cdot \bb_t\ddx+\int_{\mathcal{C}}((\bo_{\bu})_t\times\bx+(\boldsymbol{\xi}_{\bu})_t)\cdot \nabla_{\bx}\bb\cdot\bb_t\ddx\\
-\int_{\mathcal C}((\bo_{\bu})\times\bx+(\boldsymbol{\xi}_{\bu}))\cdot \nabla_{\bx}\bb_t\cdot\bb_t\ddx\\
-\int_{\mathcal{C}}(\bu\cdot\nabla_{\bx}\bb+\bb\mathrm{div}\bu-\bb\cdot\nabla_{\bx}\bu)_t\cdot\bb_t\ddx
=:\sum_{i=1}^4\mathcal{I}_{4,i}.
\end{multline}

We use $\eqref{xi-u-t-1}$ and $\eqref{bo-t}$ to derive 
\begin{multline*}
\mathcal{I}_{4,1}\leq \epsilon\norm{\bb_t}^2_{L^2_{(\mathcal{C})}}
+C|(\bo_{\bu})_t|^2\norm{\bb}^2_{L^2_{(\mathcal{C})}}\\
\leq \epsilon\norm{\bb_t}^2_{L^2_{(\mathcal{C})}}+C(\norm{\bu_t}^2_{L^2_{(\mathcal{C})}}+\norm{\bv_t}^2_{L^2_{(\mathcal{C})}}+\norm{\bv}^2_{L^2_{(\mathcal{C})}}(\norm{\bu}^2_{L^2_{(\mathcal{C})}}+\norm{\bv}^2_{H^1_{(\mathcal{C})}}))\norm{\bb}^2_{L^2_{(\mathcal{C})}}, 
\end{multline*}
and 
\begin{equation*}
\mathcal I_{4,2}\leq \varepsilon \|\nabla_\bx \bb_t\|^2_{L^2_{(\mathcal C)}} + C \left(\|\bu\|^2_{L^2_{(\mathcal C)}} + \|\bv\|_{H^3_{(\mathcal C)}}^2\right)\|\bb_t\|^2_{L^2_{(\mathcal C)}}.
\end{equation*}
Further,
\begin{multline*}
\mathcal{I}_{4,3}\leq \epsilon\norm{\bb_t}^2_{L^2_{(\mathcal{C})}}+C\norm{\nabla_{\bx}\bb}^2_{L^2_{(\mathcal{C})}}(|(\bo_{\bu})_t|^2+|(\boldsymbol{\xi}_{\bu})_t|^2)\\
\leq \epsilon\norm{\bb_t}^2_{L^2_{(\mathcal{C})}}+C\norm{\nabla_{\bx}\bb}^2_{L^2_{(\mathcal{C})}}(\norm{\bu_t}^2_{L^2_{(\mathcal{C})}}+\norm{\bv_t}^2_{L^2_{(\mathcal{C})}}+\norm{\bv}^2_{L^2_{(\mathcal{C})}}(\norm{\bu}^2_{L^2_{(\mathcal{C})}}+\norm{\bv}^2_{H^1_{(\mathcal{C})}})).
\end{multline*}

Using $\eqref{u-l-infty-2}$, $\eqref{bo}$ and $\eqref{bo-t}$, we have 
\begin{equation*}
\begin{split}  
\mathcal{I}_{4,4}&=-\int_{\mathcal{C}}(\bu_t\cdot\nabla_{\bx}\bb+\bu\cdot\nabla_{\bx}\bb_t+\bb_t\mathrm{div}_{\bx}\bu +\bb\mathrm{div}_{\bx}\bu_t-\bb_t\cdot\nabla_{\bx}\bu-\bb\cdot\nabla_{\bx}\bu_t)\cdot\bb_t\\
&\leq C(\norm{\bu_t}_{L^2_{(\mathcal{C})}}\norm{\nabla_{\bx}\bb}_{L^4_{(\mathcal{C})}}\norm{\bb_t}_{L^4_{(\mathcal{C})}}+\norm{\nabla_{\bx}\bb_t}_{L^2_{(\mathcal{C})}}\norm{\bu}_{L^\infty_{(\mathcal{C})}}\norm{\bb_t}_{L^2_{(\mathcal{C})}}\\
&\quad +\norm{\bb_t}_{L^2_{(\mathcal{C})}}(\norm{\mathrm{div}_{\bx}\bv}_{L^\infty_{(\mathcal{C})}}+|\bo_{\bu}|)\norm{\bb_t}_{L^2_{(\mathcal{C})}}+\norm{\bb}_{L^\infty_{(\mathcal{C})}}(\norm{\nabla_{\bx}\bv_t}_{L^2_{(\mathcal{C})}}+|(\bo_{\bu})_t|)\norm{\bb_t}_{L^2_{(\mathcal{C})}})\\
&\leq\epsilon(\norm{\bu_t}^2_{L^2_{(\mathcal{C})}}+\norm{\nabla_{\bx}\bb_t}^2_{L^2_{(\mathcal{C})}}+\norm{\bb_t}^2_{L^2_{(\mathcal{C})}}+\norm{\nabla_\bx\bv_t}^2_{L^2_{(\mathcal{C})}}+\norm{\bv_t}^2_{L^2_{(\mathcal{C})}})\\
&\quad +C\norm{\bb}_{H^2_{(\mathcal{C})}}^2\norm{\bb_t}_{H^1_{(\mathcal{C})}}^2+C(\norm{\bu}^2_{L^2_{(\mathcal{C})}}+\norm{\bv}^2_{H^3_{(\mathcal{C})}})\norm{\bb_t}^2_{L^2_{(\mathcal{C})}}\\
&\quad +C\norm{\bb}^2_{L^\infty_{(\mathcal{C})}}\norm{\bv}_{L^2_{(\mathcal{C})}}^2(\norm{\nabla_{\bx}\bv}^2_{L^2_{(\mathcal{C})}}+\norm{\bu}^2_{L^2_{(\mathcal{C})}}+\norm{\bv}^2_{L^2_{(\mathcal{C})}}).
\end{split}
\end{equation*}

We plug the above estimates into $\eqref{b-t-l-2}$ to deduce that 
\begin{multline}\label{b-t-l-2-r}
\frac{1}{2}\frac{\mathrm{d}}{\mathrm{d}t}\int_{\mathcal{C}}|\bb_t|^2\ddx+C\int_{\mathcal{C}}|\nabla_{\bx}\bb_t|^2\ddx\\
\leq \epsilon(\norm{\bu_t}^2_{L^2_{(\mathcal{C})}}+\norm{\nabla_{\bx}\bb_t}^2_{L^2_{(\mathcal{C})}}+\norm{\bb_t}^2_{L^2_{(\mathcal{C})}}+\norm{\nabla_\bx\bv_t}^2_{L^2_{(\mathcal{C})}}+\norm{\bv_t}^2_{L^2_{(\mathcal{C})}})\\
+C(\norm{\bu_t}^2_{L^2_{(\mathcal{C})}}+\norm{\bv_t}^2_{L^2_{(\mathcal{C})}}+\norm{\bv}^2_{L^2_{(\mathcal{C})}}(\norm{\bu}^2_{L^2_{(\mathcal{C})}}+\norm{\bv}^2_{H^1_{(\mathcal{C})}}))\norm{\bb}^2_{H^1_{(\mathcal{C})}}\\
+C\norm{\bb}_{H^2_{(\mathcal{C})}}^2\norm{\bb_t}_{H^1_{(\mathcal{C})}}^2+C(\norm{\bu}^2_{L^2_{(\mathcal{C})}}+\norm{\bv}^2_{H^3_{(\mathcal{C})}})\norm{\bb_t}^2_{L^2_{(\mathcal{C})}}\\
+C\norm{\bb}^2_{L^\infty_{(\mathcal{C})}}\norm{\bv}_{L^2_{(\mathcal{C})}}^2(\norm{\nabla_{\bx}\bv}^2_{L^2_{(\mathcal{C})}}+\norm{\bu}^2_{L^2_{(\mathcal{C})}}+\norm{\bv}^2_{L^2_{(\mathcal{C})}}).
\end{multline}

Estimates $\eqref{varrho-l-2-t-r}$, $\eqref{u-t-l-2-r}$ and $\eqref{b-t-l-2-r}$ together yield
\begin{multline}\label{ene-l-2-t}
\frac{\mathrm{d}}{\mathrm{d}t}\left(\int_{\mathcal{C}}\varrho_t^2\ddx+\int_{\mathcal{S}}\rho_{\mathcal{S}}|\bu_t|^2\ddx+\int_{\mathcal{C}}|\bb_t|^2\ddx\right)+ C\int_{\mathcal{C}}\left(|\nabla_{\bx}\bv_t|^2+|\nabla_{\bx}\bb_t|^2\right)\ddx\\
\leq \epsilon(\norm{\bv}^2_{H^3_{(\mathcal{C})}}+\norm{\varrho_t}^2_{L^2_{(\mathcal{C})}}+\|\bu_t\|^2_{L^2_{(\mathcal{C})}}+\|\bv_t\|^2_{L^2_{(\mathcal{C})}}+\norm{\bb_t}^2_{L^2_{(\mathcal{C})}})+Re_1,
\end{multline}
where 
\begin{equation*}
\begin{split}
Re_1&:=C(\norm{\nabla_{\bx}\varrho}^4_{H^1_{(\mathcal{C})}}+\norm{\varrho_t}^4_{L^2_{(\mathcal{C})}}+\norm{\varrho}^2_{H^2_{(\mathcal{C})}}\norm{\bv_t}^2_{H^1_{(\mathcal{C})}})\\
&\quad +C\norm{\varrho_t}^2_{L^2_{(\mathcal{C})}}(\norm{\bv}^2_{L^\infty_{(\mathcal{C})}}+\norm{\bu}^2_{L^2_{(\mathcal{C})}}+\norm{\bv}^2_{L^2_{(\mathcal{C})}})(\norm{{\bu}}^2_{L^2_{(\mathcal{C})}}+\norm{{\bv}}^2_{L^2_{(\mathcal{C})}}+\norm{{\nabla_\bx\bv}}^2_{L^2_{(\mathcal{C})}})\\
&\quad+C\norm{\bv}^2_{L^2_{(\mathcal{C})}}(\norm{\nabla_{\bx}\bv}^2_{L^2_{(\mathcal{C})}}+\norm{\bu}^2_{L^2_{(\mathcal{C})}}+\norm{\bv}^2_{L^2_{(\mathcal{C})}})\\
& \quad +C(\norm{\bu}^2_{L^2_{(\mathcal{C})}}+\norm{\bv}^2_{L^2_{(\mathcal{C})}})(\norm{\bv_t}^2_{L^2_{(\mathcal{C})}}+\norm{\bu_t}^2_{L^2_{(\mathcal{C})}}+\norm{\bv}^2_{L^2_{(\mathcal{C})}}(\norm{\nabla_{\bx}\bv}^2_{L^2_{(\mathcal{C})}}+\norm{\bu}^2_{L^2_{(\mathcal{C})}}+\norm{\bv}^2_{L^2_{(\mathcal{C})}}))
\\
& \quad+C\norm{\varrho_t}^2_{L^2_{(\mathcal{C})}}\norm{\bv}^2_{L^\infty_{(\mathcal{C})}}(\|\nabla_{\bx}\bv\|_{L^\infty_{(\mathcal{C})}}^2+\norm{\bu}^2_{L^2_{(\mathcal{C})}}+\norm{\bv}^2_{L^2_{(\mathcal{C})}})
\\
&\quad +C\norm{\bv_t}^2_{L^2_{(\mathcal{C})}}\norm{\bv}^2_{H^3_{(\mathcal{C})}}+C\norm{\bv}^2_{H^2_{(\mathcal{C})}}\norm{\bu_t}^2_{L^2_{(\mathcal{C})}}\\
&\quad +C\norm{\varrho_t}^2_{H^1_{(\mathcal{C})}}
\norm{\varrho}^2_{H^2_{(\mathcal{C})}}+C\norm{\bb}^2_{L^\infty_{(\mathcal{C})}}\norm{\bb_t}^2_{L^2_{(\mathcal{C})}}
\\&\quad +C(\norm{\bu_t}^2_{L^2_{(\mathcal{C})}}+\norm{\bv_t}^2_{L^2_{(\mathcal{C})}}+\norm{\bv}^2_{L^2_{(\mathcal{C})}}(\norm{\bu}^2_{L^2_{(\mathcal{C})}}+\norm{\bv}^2_{H^1_{(\mathcal{C})}}))\norm{\bb}^2_{H^1_{(\mathcal{C})}}\\
&\quad +C\norm{\bb}_{H^2_{(\mathcal{C})}}^2\norm{\bb_t}_{H^1_{(\mathcal{C})}}^2+C(\norm{\bu}^2_{L^2_{(\mathcal{C})}}+\norm{\bv}^2_{H^3_{(\mathcal{C})}})\norm{\bb_t}^2_{L^2_{(\mathcal{C})}}\\
&\quad +C\norm{\bb}^2_{L^\infty_{(\mathcal{C})}}\norm{\bv}_{L^2_{(\mathcal{C})}}^2(\norm{\nabla_{\bx}\bv}^2_{L^2_{(\mathcal{C})}}+\norm{\bu}^2_{L^2_{(\mathcal{C})}}+\norm{\bv}^2_{L^2_{(\mathcal{C})}}).
\end{split}
\end{equation*}

We pick $\boldsymbol{\phi}=\bu_t$ in $\eqref{re-system-momentum}$ and use $\eqref{bo}$ to get 
\begin{equation*}
\begin{split}
\norm{\bu_t}^2_{L^2_{(\mathcal{S})}}&\leq C(\norm{\nabla_{\bx}\bv}_{L^2_{(\mathcal{C})}}\norm{\nabla_{\bx}\bv_t}_{L^2_{(\mathcal{C})}}+\norm{\varrho}_{L^2_{(\mathcal{C})}}\norm{\nabla_{\bx}\bv_t}_{L^2_{(\mathcal{C})}}+|\bo_{\bu}|\norm{\bu_t}_{L^2_{(\mathcal{S})}}\norm{\bu}_{L^2_{(\mathcal{S})}}\\
&\quad+\norm{\bv}_{L^2_{(\mathcal{C})}}\norm{\nabla_{\bx}\bv}_{L^\infty_{(\mathcal{C})}}\norm{\bu_t}_{L^2_{(\mathcal{C})}}+(\norm{\bv}_{L^2_{(\mathcal{C})}}+\norm{\bu}_{L^2_{(\mathcal{C})}})\norm{\bv}_{L^2_{(\mathcal{C})}}\norm{\bu_t}_{L^2_{(\mathcal{C})}}\\
&\quad+\norm{\varrho}_{L^4_{(\mathcal{C})}}\norm{\nabla_{\bx}\varrho}_{L^4_{(\mathcal{C})}}\norm{\bu_t}_{L^2_{(\mathcal{C})}}+\norm{\bb}_{L^\infty_{(\mathcal{C})}}\norm{\bb}_{L^2_{(\mathcal{C})}}\norm{\nabla_{\bx}\bv_t}_{L^2_{(\mathcal{C})}}).
\end{split}
\end{equation*}
This together with $\eqref{bo}$ and $\eqref{u-B-C}$ yield 
\begin{multline}\label{l-2-t-u-new}
\norm{\bu_t}^2_{L^2_{(\mathcal{S})}}\leq \epsilon(\norm{\nabla_{\bx}\bv}^2_{L^2_{(\mathcal{C})}}+\norm{\varrho}^2_{L^2_{(\mathcal{C})}})+C(\norm{\nabla_{\bx}\bv_t}^2_{L^2_{(\mathcal{C})}}+\norm{\bu}^4_{L^2_{(\mathcal{C})}}+\norm{\bv}^4_{L^2_{(\mathcal{C})}}+\norm{\varrho}^4_{H^2_{(\mathcal{C})}}\\
+\norm{\bv}^2_{L^2_{(\mathcal{C})}}\norm{\nabla_{\bx}\bv}^2_{L^\infty_{(\mathcal{C})}}+\norm{\bb}^2_{H^2_{(\mathcal{C})}}\norm{\bb}^2_{L^2_{(\mathcal{C})}}).
\end{multline}

We multiply $\eqref{re-system-induction}_1$ by $\bb_t$, then integrate the resulting equation over $\mathcal{C}$ to get 
\begin{multline*}
\norm{\bb_t}^2_{L^2_{(\mathcal{C})}}\leq C(|\bo_{\bu}|\norm{\bb}_{L^2_{(\mathcal{C})}}\norm{\bb_t}_{L^2_{(\mathcal{C})}}+\norm{\bv}_{L^\infty_{(\mathcal{C})}}\norm{\nabla_{\bx}\bb}_{L^2_{(\mathcal{C})}}\norm{\bb_t}_{L^2_{(\mathcal{C})}}\\
+(\norm{\nabla_{\bx}\bv}_{L^2_{(\mathcal{C})}}+|\bo_{\bu}|)\norm{\bb}_{L^\infty_{(\mathcal{C})}}\norm{\bb_t}_{L^2_{(\mathcal{C})}}+\norm{\nabla_{\bx}\bb}_{L^2_{(\mathcal{C})}}\norm{\nabla_{\bx}\bb_t}_{L^2_{(\mathcal{C})}}),
\end{multline*}
which together with $\eqref{bo}$ leads to
\begin{multline}\label{l-2-t-b-new}
\norm{\bb_t}^2_{L^2_{(\mathcal{C})}}\leq \epsilon \norm{\nabla_{\bx}\bb}^2_{L^2_{(\mathcal{C})}}
+C(\norm{\nabla_{\bx}\bb_t}^2_{L^2_{(\mathcal{C})}}\\
+(\norm{\bu}^2_{L^2_{(\mathcal{C})}}+\norm{\bv}^2_{L^2_{(\mathcal{C})}})\norm{\bb}^2_{L^2_{(\mathcal{C})}}+\norm{\bv}_{L^\infty_{(\mathcal{C})}}^2\norm{\nabla_{\bx}\bb}^2_{L^2_{(\mathcal{C})}}\\
+
(\norm{\nabla_{\bx}\bv}^2_{L^2_{(\mathcal{C})}}+\norm{\bu}^2_{L^2_{(\mathcal{C})}}+\norm{\bv}^2_{L^2_{(\mathcal{C})}})\norm{\bb}^2_{L^\infty_{(\mathcal{C})}}).
\end{multline}

\subsection{Further Estimates}

In order to utilize the existing results of $a$ $priori$ estimates for the compressible N-S equations in \cite{V-1983-ASNSP}, we rewrite original system as follows.

$\bullet$ {\bf Continuity equation:}
\begin{equation}\label{varrho-new}
\begin{split}
\varrho_t+\bv\cdot\nabla_{\bx}\varrho+\bar{\rho}\mathrm{div}_{\bx}\bv=f_0, \quad  f_0:=-\varrho\mathrm{div}_{\bx}\bv.
\end{split}
\end{equation}

$\bullet$ {\bf Momentum equations:}
\begin{equation*}
\begin{split}
&\bv_t-\frac{1}{\bar{\rho}}\mathrm{div}_{\bx}S(\nabla_{\bx}\bv)+\frac{p'(\bar{\rho})}{\bar{\rho}}\nabla_{\bx}\varrho=\bm{f},\\
&\bm{f}:=-\frac{\varrho}{\bar{\rho}\rho}\mathrm{div}_{\bx}S(\nabla_{\bx}\bv)+\left(\frac{p'(\bar{\rho})}{\bar{\rho}}-\frac{p'({\rho})}{{\rho}}\right)\nabla_{\bx}\varrho-(\boldsymbol{\xi}_{\bu})_t-(\bo_{\bu})_t\times\bx\\
&\quad\quad -\bo_{\bu}\times\bu-\bv\cdot\nabla_{\bx}\bu+\frac{1}{\rho}\mathrm{div}_{\bx}\left(\bb\otimes\bb-\frac{1}{2}|\bb|^2\mathbb{I}\right).
\end{split}
\end{equation*}

$\bullet$ {\bf Induction equation:}
\begin{equation}\label{ind.equation}
\begin{split}
&\bb_t+\nabla_{\bx}\times(\nabla_{\bx}\times\bb)=\bm{f}_1,\\
&\bm{f}_1:=-(\bo_{\bu}\times\bb)+(\bo_{\bu}\times\bx+\boldsymbol{\xi}_{\bu})\cdot\nabla_{\bx}\bb-\bu\cdot\nabla_{\bx}\bb-\bb\mathrm{div}_{\bx}\bu+\bb\cdot\nabla_{\bx}\bu,\\
&\mathrm{with}\quad  \mathrm{div}_{\bx}\bb=0,\\
&\mathrm{and\hspace{0.1cm}boundary\hspace{0.1cm} condition\hspace{0.1cm}} \bb\cdot\bn=0,(\nabla_{\bx}\times\bb)\times\bn=0.
\end{split}
\end{equation}

 {\bf(First-order estimates of $\bv$ and $\bb$):}

We borrow the existing results from \cite[Lemma 4.1]{V-1983-ASNSP}. 
It holds that 
\begin{equation}\label{first-order-v}
\begin{split}
\frac{\mathrm{d}}{\mathrm{d}t}(\norm{\mathcal{D}\bv}^2_{L^2_{(\mathcal{C})}}+\norm{\mathrm{div}_{\bx}\bv}^2_{L^2_{(\mathcal{C})}})+C\norm{\nabla^2_{\bx}\bv}^2_{L^2_{(\mathcal{C})}}\leq C\norm{\varrho}^2_{H^1_{(\mathcal{C})}}
+\norm{\bm{f}}^2_{L^2_{(\mathcal{C})}},
\end{split}
\end{equation}
and (\cite[Proof of Lemma 2.2]{FaYu})
\begin{equation}\label{first-order-b}
\begin{split}
\frac{\mathrm{d}}{\mathrm{d}t}\norm{\nabla_{\bx}\times\bb}^2_{L^2_{(\mathcal{C})}}+C\norm{\nabla_{\bx}\times(\nabla_{\bx}\times\bb)}^2_{L^2_{(\mathcal{C})}}\leq C\norm{\bm{f}_1}^2_{L^2_{(\mathcal{C})}}.
\end{split}
\end{equation}

We take operator $\nabla_{\bx}$ on $\eqref{varrho-new}$, multiply it by $\nabla_{\bx}\varrho$ and integrate the resulting equation over $\mathcal{C}$ to get
\begin{equation}\label{first-order-rho}
\begin{split}
\frac{\mathrm{d}}{\mathrm{d}t}\norm{\nabla_{\bx}\varrho}^2_{H^1_{(\mathcal{C})}}\leq \epsilon\norm{\bv}^2_{H^3_{(\mathcal{C})}}+C
(\norm{\mathrm{div}_{\bx}\bv}^2_{H^2_{(\mathcal{C})}}+\norm{\varrho}^2_{H^2_{(\mathcal{C})}}+\norm{\varrho}^4_{H^2_{(\mathcal{C})}}). 
\end{split}
\end{equation}
Furthermore, it holds that 
\begin{equation}\label{first-order-rho-t}
\begin{split}
\norm{\varrho_t}^2_{H^1_{(\mathcal{C})}}\leq C(\norm{\bv}^2_{H^2_{(\mathcal{C})}}+\norm{\bv}^2_{H^2_{(\mathcal{C})}}\norm{\varrho}^2_{H^2_{(\mathcal{C})}}+\norm{f_0}^2_{H^1_{(\mathcal{C})}}). 
\end{split}
\end{equation}

{\bf(Stoke's type estimates and elliptic equation with Navier-slip boundary):}

Based on the result in \cite[Lemma 4.3]{V-1983-ASNSP} for Stokes's equation and \cite{ADN-64} for elliptic equation with Navier-slip boundary, we derive that
\begin{multline}\label{combine-first}
\norm{\varrho}^2_{H^1_{(\mathcal{C})}}+\norm{\bv}^2_{H^2_{(\mathcal{C})}}+\norm{\bb}^2_{H^2_{(\mathcal{C})}}\\
\leq C(\norm{\mathrm{div}_{\bx}\bv}^2_{H^1_{(\mathcal{C})}}+\norm{\bv_t}^2_{L^2_{(\mathcal{C})}}+\norm{\bb}^2_{L^2_{(\mathcal{C})}}+\norm{\bb_t}^2_{L^2_{(\mathcal{C})}}+\norm{\bm{f}}^2_{L^2_{(\mathcal{C})}}+\norm{\bm{f}_1}^2_{L^2_{(\mathcal{C})}}),
\end{multline}
and
\begin{multline}\label{second-order}
\norm{\varrho}^2_{H^2_{(\mathcal{C})}}+\norm{\bv}^2_{H^3_{(\mathcal{C})}}+\norm{\bb}^2_{H^3_{(\mathcal{C})}}\\
\leq C(\norm{\mathrm{div}_{\bx}\bv}^2_{H^2_{(\mathcal{C})}}+\norm{\nabla_{\bx}\bv_t}^2_{L^2_{(\mathcal{C})}}+\norm{\bb}^2_{L^2_{(\mathcal{C})}}+\norm{\bb_t}^2_{H^1_{(\mathcal{C})}}+\norm{\bm{f}}^2_{H^1_{(\mathcal{C})}}+\norm{\bm{f}_1}^2_{H^1_{(\mathcal{C})}}).
\end{multline}

By using $(4.40)$ in \cite{V-1983-ASNSP}, we deduce that 
\begin{multline}\label{first-order-t}
\frac{\mathrm{d}}{\mathrm{d}t}([\bv]_{1,2}^2+\norm{\varrho}^2_{H^1_{(\mathcal{C})}})+\norm{\mathrm{div}_{\bx}\bv}^2_{H^1_{(\mathcal{C})}}\leq \epsilon(\norm{\varrho}^2_{H^1_{(\mathcal{C})}}+\norm{\bv}^2_{H^3_{(\mathcal{C})}})+C(\norm{\bv}^2_{H^1_{(\mathcal{C})}}+\norm{\bv_t}^2_{L^2_{(\mathcal{C})}}\\
+\norm{\varrho}^4_{H^1_{(\mathcal{C})}}+\norm{\bm{f}}^2_{L^2_{(\mathcal{C})}}+\norm{f_0}^2_{H^1_{(\mathcal{C})}}+\norm{\bm{f}_1}^2_{L^2_{(\mathcal{C})}}), 
\end{multline}
where $[\cdot]_{1,2}$ denotes the sum of $L^2$-norms of the interior and tangential derivatives of order 1.

Furthermore, the following holds.
\begin{lemma}
It holds that
\begin{multline}\label{second-order-t}
\frac{\mathrm{d}}{\mathrm{d}t}([\bv]^2_{2,2}+\norm{\varrho}^2_{H^2_{(\mathcal{C})}}+[\bb]^2_{2,2})+\norm{\mathrm{div}_{\bx}\bv}^2_{H^2_{(\mathcal{C})}}\\
\leq \epsilon(\norm{\varrho}^2_{H^2_{(\mathcal{C})}}+\norm{\bv}^2_{H^3_{(\mathcal{C})}})+C(\norm{\bv}^2_{H^2_{(\mathcal{C})}}+\norm{\bv_t}^2_{H^1_{(\mathcal{C})}}\\
 +\norm{\varrho}^2_{H^1_{(\mathcal{C})}}+\norm{\varrho}^4_{H^2_{(\mathcal{C})}}+\norm{\bb}^2_{H^2_{(\mathcal{C})}}+\norm{\bb_t}^2_{H^1_{(\mathcal{C})}}+\norm{\bm{f}}^2_{H^1_{(\mathcal{C})}}+\norm{f_0}^2_{H^2_{(\mathcal{C})}}+\norm{\bm{f}_1}^2_{H^1_{(\mathcal{C})}}).
\end{multline}
Here $[\cdot]_{2,2}$ denotes the sum of $\|\cdot\|_{H^1_{\mathcal C}}$ and $L^2$ norms of the interior and tangential derivatives of order 2. It holds that $\frac 1C\|\cdot\|_{H^1_{(\mathcal C)}}\leq [\cdot]_{2,2}\leq C\|\cdot\|_{H^2_{(\mathcal C)}}$.\footnote{Hereinafter, we also use $\|\cdot\|$ for an arbitrary norm. In particular, the derivative of norms with respect to time cannot be replaced by the derivatives of another equivalent norm. Nevertheless, we use this simplification as it does not affect a validity of result and it improves the readability of the paper. }
\end{lemma}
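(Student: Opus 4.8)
The plan is to follow the strategy of \cite{V-1983-ASNSP} (specifically the derivation of formula $(4.40)$ and its second-order analogue), adapting it to account for the magnetic field $\bb$ and the induction equation $\eqref{ind.equation}$. The starting point is to differentiate the momentum system twice with respect to the spatial variables in the interior and in tangential directions, multiply by the corresponding second-order (interior/tangential) derivative of $\bv$, and integrate over $\mathcal C$, thereby producing a $\frac{\mathrm d}{\mathrm d t}[\bv]_{2,2}^2$ term together with a dissipative term controlling $\|\mathrm{div}_{\bx}\bv\|_{H^2_{(\mathcal C)}}^2$ after using the Stokes-type estimate $\eqref{second-order}$; simultaneously one treats the continuity equation $\eqref{varrho-new}$ by applying second-order interior/tangential derivatives, multiplying by the matching derivative of $\varrho$, and integrating, which yields $\frac{\mathrm d}{\mathrm d t}\|\varrho\|_{H^2_{(\mathcal C)}}^2$ up to the commutator terms involving $\bv\cdot\nabla_{\bx}\varrho$ and the source $f_0$. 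The new ingredient compared with \cite{V-1983-ASNSP} is the induction equation: here I would apply the same interior/tangential second-order derivatives to $\eqref{ind.equation}$, test with the corresponding derivative of $\bb$, integrate by parts using $\mathrm{div}_{\bx}\bb=0$ and the slip boundary conditions $\bb\cdot\bn=0$, $(\nabla_{\bx}\times\bb)\times\bn=0$ (invoking, as already done in the excerpt, the equivalence $\|\nabla_{\bx}\times\bb\|_{L^2_{(\mathcal C)}}\sim\|\nabla_{\bx}\bb\|_{L^2_{(\mathcal C)}}$ from \cite{KoYa} and its higher-order versions), and so obtain a $\frac{\mathrm d}{\mathrm d t}[\bb]_{2,2}^2$ contribution. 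Adding the three identities gives the left-hand side of $\eqref{second-order-t}$.

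Next I would estimate all the resulting error terms. The commutators from the convective terms $\bv\cdot\nabla_{\bx}\varrho$, $\bv\cdot\nabla_{\bx}\bu$, $\bu\cdot\nabla_{\bx}\bb$ and the lower-order rotation/translation terms $\bo_{\bu}\times(\cdot)$, $(\bo_{\bu}\times\bx+\bxi_{\bu})\cdot\nabla_{\bx}(\cdot)$ are handled exactly as in the zero- and first-order estimates already carried out in the excerpt: by H\"older and Sobolev embeddings in $\mathbb R^3$ ($H^1\hookrightarrow L^6$, $H^2\hookrightarrow L^\infty$), using $\eqref{xi-u}$, $\eqref{bo}$, $\eqref{u-B-C}$, $\eqref{u-l-infty-2}$ to trade factors of $\bo_{\bu}$, $\bxi_{\bu}$ for $\|\bu\|_{L^2_{(\mathcal C)}}+\|\bv\|_{L^2_{(\mathcal C)}}$. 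The highest-order terms that threaten to exceed the available regularity — those producing $\|\bv\|_{H^3_{(\mathcal C)}}$ or $\|\bb\|_{H^3_{(\mathcal C)}}$ — are absorbed into the small-$\epsilon$ term, which is legitimate because the Stokes estimate $\eqref{second-order}$ and the elliptic estimate for $\bb$ let one later close the loop (this mirrors how $\eqref{first-order-t}$ was obtained from $(4.40)$). The magnetic stress $\mathrm{div}_{\bx}(\bb\otimes\bb-\tfrac12|\bb|^2\mathbb I)$ appearing through $\bm f$, and the terms $\bb\,\mathrm{div}_{\bx}\bu$, $\bb\cdot\nabla_{\bx}\bu$ in $\bm f_1$, are controlled by $\|\bb\|_{H^2_{(\mathcal C)}}^2(\cdots)$ after an $\epsilon$-Young splitting, which accounts for the $\|\bb\|_{H^2_{(\mathcal C)}}^2$ and $\|\bb_t\|_{H^1_{(\mathcal C)}}^2$ terms on the right of $\eqref{second-order-t}$; the quartic term $\|\varrho\|_{H^2_{(\mathcal C)}}^4$ comes, as usual, from the quadratic source $f_0=-\varrho\,\mathrm{div}_{\bx}\bv$ combined with $\|\mathrm{div}_{\bx}\bv\|_{H^2_{(\mathcal C)}}\lesssim\|\bv\|_{H^3_{(\mathcal C)}}$ and an $\epsilon$-absorption of the cubic cross term.

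Finally I would collect terms, note the norm equivalences $\tfrac1C\|\cdot\|_{H^1_{(\mathcal C)}}\le[\cdot]_{2,2}\le C\|\cdot\|_{H^2_{(\mathcal C)}}$ for both $\bv$ and $\bb$ (with the caveat in the footnote about differentiating the genuine norm, not an equivalent one, in time), rewrite $\|\mathrm{div}_{\bx}\bv\|_{H^2_{(\mathcal C)}}^2$ on the left as the stated dissipation, and bound every source norm $\|\bm f\|_{H^1_{(\mathcal C)}}$, $\|f_0\|_{H^2_{(\mathcal C)}}$, $\|\bm f_1\|_{H^1_{(\mathcal C)}}$ by the explicit polynomial combinations of $\|\varrho\|_{H^2}$, $\|\bv\|_{H^2}$, $\|\bv_t\|_{H^1}$, $\|\bb\|_{H^2}$, $\|\bb_t\|_{H^1}$, $\|(\bo_{\bu})_t\|$, $\|(\bxi_{\bu})_t\|$ already established in $\eqref{xi-u-t-1}$ and $\eqref{bo-t}$; what survives is precisely the right-hand side of $\eqref{second-order-t}$. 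I expect the main obstacle to be the bookkeeping of the commutator terms arising when the second-order tangential derivatives hit the variable-coefficient convective operators near $\partial\mathcal C$ — in particular ensuring that the boundary contributions from integration by parts in the induction-equation estimate genuinely vanish under the slip conditions, and that no term of order $\|\bv\|_{H^3}$ or $\|\bb\|_{H^3}$ is left with an $O(1)$ coefficient; these are exactly the delicate points that \cite{V-1983-ASNSP} addresses for the Navier--Stokes part, and the magnetic analogue requires the Hodge-type estimate for the curl-curl operator with the stated boundary conditions, which is why \cite{KoYa,georgescu} are invoked.
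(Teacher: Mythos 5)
Your overall strategy coincides with the paper's: the estimates for $\bv$ and $\varrho$ are imported from \cite[Chapter 4]{V-1983-ASNSP}, and the genuinely new work is the second-order interior/tangential energy estimate for the induction equation. For the interior part your plan matches what the paper does (apply $\partial_j\partial_k$ to \eqref{ind.equation}, test with $\chi_0^2\partial_j\partial_k\bb$ for a cutoff $\chi_0$ vanishing on $\partial\mathcal C$, and use \eqref{first-order-b} for the lower-order terms), and your bookkeeping of the convective and rotation/translation commutators via \eqref{xi-u}, \eqref{bo}, \eqref{bo-t} is consistent with the paper's treatment of the "lower order terms."

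However, the step you defer as "the main obstacle" --- showing that the boundary contributions from the curl--curl term are harmless under the slip conditions after two tangential derivatives --- is precisely the entire content of the paper's written proof, and your proposal does not close it. In curved geometry the tangential derivatives commute neither with $\nabla_{\bx}\times(\nabla_{\bx}\times\cdot)$ nor with the boundary conditions $\bb\cdot\bn=0$, $(\nabla_{\bx}\times\bb)\times\bn=0$, so one cannot simply "integrate by parts using the slip boundary conditions" and appeal to higher-order versions of $\|\nabla_{\bx}\times\bb\|_{L^2_{(\mathcal C)}}\sim\|\nabla_{\bx}\bb\|_{L^2_{(\mathcal C)}}$; the references \cite{KoYa,georgescu} provide stationary Hodge/elliptic estimates, not the evolutionary tangential energy identity you need here. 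The paper resolves this by flattening the boundary with charts whose Jacobian is an orthogonal matrix plus an $O(\delta)$ perturbation $\mathfrak R$, which turns \eqref{ind.equation} into the perturbed system \eqref{narovnane} (the operator acquires a correction $\mathfrak C\nabla_{\bx}^2\hat\bb$ and the boundary conditions acquire corrections $\mathfrak D\nabla_{\bx}\hat\bb$, all of size $O(\delta)$), and then rewrites the term $\int_{V_i}\nabla_{\bx}\times(\nabla_{\bx}\times\partial_j\partial_k\hat\bb)\,\chi^2\partial_j\partial_k\hat\bb\,{\rm d}x$ using the symbols $\mathbb S_1,\mathbb S_2$ so that the transformed boundary condition can be inserted \emph{before} integrating by parts; the sign-definite dissipation then survives up to $O(\delta)$ terms that are absorbed for $\delta$ small. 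Without this device (or an equivalent one), your plan leaves uncontrolled boundary integrals and commutators of order $\norm{\bb}_{H^3_{(\mathcal C)}}$ with an $O(1)$ coefficient on the right-hand side, and \eqref{second-order-t} does not follow.
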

\begin{proof}
The estimates concerning $\bv$ and $\varrho$ can be found in \cite[Chapter 4]{V-1983-ASNSP}. We focus here only on the magnetic equation.

For every $\delta>0$ there is $n\in \mathbb N$ and sets $\Omega_i,\, i\in \{0,\ldots,n\}$ such that $\mathcal C\subset \bigcup_{i=0}^n \Omega_i$ and $\Omega_0\subset\subset \Omega$. Further, for every $i\in \{1,\ldots,n\}$ there is a mapping $\Phi_i:\mathbb R^3\to \mathbb R^3$ and $R_i>0$ such that $\Phi_i^{-1}(V_i\cap \mathcal C) = B_{R_i}^+$ and $\Phi_i^{-1}(V_i\cap \partial\mathcal C) = B_{R_i}^{2}\times\{0\}$ where $V_i$ is an open set such that $\Omega_i\subset \overline \Omega_i\subset V_i$, $B_{R_i}^+$ is an upper half ball centered at zero, and $B_{R_i}^2$ is circle in two dimensions. Moreover, $\Phi_i$ can be chosen such that 
$$
\nabla_{\bx} \Phi_i = \mathcal R_i + \mathfrak R_i
$$
where $\mathcal R_i$ is an orthogonal matrix and $\|\mathfrak R_i\|_\infty<\delta$.

To get an interior estimate of $\bb$ we apply $\partial_j\partial_k$ on \eqref{ind.equation} and multiply it by $\chi_0^2\partial_j\partial_k \bb$ where $\chi_0$ is a smooth function with zero trace on $\partial \mathcal C$ which is identically equal to one on $\Omega_0$. An appropriate calculation and \eqref{first-order-b} yield
$$
\frac 12 \frac{d}{dt} \int_{\mathcal C}|\chi_0\partial_j\partial_k\bb|^2\ {\rm d}x + \int_{\mathcal C} |\chi_0\nabla_{\bx} \partial_j\partial_k \bb|^2\, {\rm d}x\leq \|{\bm f}_1\|_{H^1_{(\mathcal C)}}^2.
$$

Fix $i\in\{1,\ldots,n\}$. As \eqref{ind.equation} is invariant with respect to rotations, we assume $\mathcal R_i = \mathbb I$. The unknown $\hat \bb  = \bb\circ \Phi_i$ satisfies
\begin{equation}\label{narovnane}
\begin{split}
\hat \bb_t + \nabla_{\bx}\times(\nabla_{\bx}\times \hat \bb)  + \mathfrak C\nabla_{\bx}^2\hat \bb & = \hat {\bm f_1}\ \mbox{ on }V_i\\
\diver_{\bx} \hat \bb + \operatorname{Tr}\left(\nabla_{\bx} \hat \bb \mathfrak R\right) & = 0\ \mbox{ on }V_i\\
\hat \bb \cdot\left({\bf e}_3(\mathbb I + \mathfrak R)\right) &= 0 \mbox{ on }\mathbb R^2\times \{0\}\\
(\nabla_{\bx} \times \hat \bb)\times ({\bf e}_3(\mathbb I + \mathfrak R)) + (\mathfrak D\nabla_{\bx} \hat \bb)\times({\bf e}_3(\mathbb I + \mathfrak R)) & = 0 \mbox{ on }\mathbb R^2\times\{0\}
\end{split}
\end{equation}
for an appropriately chosen $\hat {\bm f_1}$. Here $\mathfrak C$ and $\mathfrak D$ denotes $\mathcal C^3$ tensors whose $L^\infty$ norm is controlled by $\delta$. Take a smooth function $\chi$ such that $\chi\equiv 1$ on $\Phi_i(\Omega_i)$ and it is zero on $\partial B_{R_i}^+\setminus B_{R_i}^2\times\{0\}$. We differentiate \eqref{narovnane}$_1$ with respect to $\partial_j\partial_k$, multiply by $\chi^2\partial_j
\partial_k \hat \bb$, $j,k\in \{1,2\}$ and integrate over $V_i$ to get
\begin{equation}
\frac 12 \frac d{dt}\int_{V_i}|\chi\partial_j\partial_k \hat \bb|^2\, {\rm d}x + \int_{V_i} \nabla_{\bx} \times (\nabla_{\bx}\times\partial_j\partial_k \hat \bb) \chi^2\partial_j\partial_k \hat \bb\, {\rm d}x 
 \leq \delta\int_{V_i} |\nabla_{\bx} \chi \partial_j\partial_k \hat\bb|\, {\rm d}x +  l.o.t.\, ,\label{eq.indukce.1}
\end{equation}
hereinafter, $l.o.t.$ abbreviates lower order terms which might be estimated in means of \eqref{first-order-b}, a constant $C(\|\mathfrak R\|_{W^{3,\infty}})$ and $\|{\bm f}_1\|_{H^1_{(\mathcal C)}}^2$. To handle the second term on the left-hand side, we establish two symbols, namely
$$
\mathbb S_1(v) = \begin{pmatrix}0 & v_3 & -v_2\\ -v_3 & 0 & v_1 \\ v_2 & -v_1 & 0\end{pmatrix},\mbox{ and }\mathbb S_2(v) = \begin{pmatrix}0 & (\mathfrak R v)_3 & -(\mathfrak R v)_2\\ -(\mathfrak R v)_3 & 0 & (\mathfrak R v)_1\\ (\mathfrak R v)_2 & -(\mathfrak R v)_1 & 0 \end{pmatrix}.
$$
Since $\nabla_{\bx} \times \bb = \mathbb S_1(\partial) \bb$ we get 
\begin{multline*}
\int_{V_i} \mathbb S_1(\partial) (\nabla_{\bx} \times\partial_j\partial_k \hat \bb) \chi^2\partial_j\partial_k \hat \bb\, {\rm d}x  =  \\
\int_{V_i} \left(\mathbb S_1(\partial) + \mathbb S_2(\partial)\right) \partial_j\partial_k \left(\nabla_{\bx} \times\hat \bb + \mathfrak D\nabla_{\bx} \hat \bb\right) \chi^2\partial_j\partial_k \hat \bb\, {\rm d}x \\
 - \int_{V_i} \left(\mathbb S_1(\partial) + \mathbb S_2(\partial)\right) (\partial_j\partial_k \mathfrak D\nabla_{\bx} \hat \bb) \chi^2\partial_j\partial_k \hat \bb\, {\rm d}x 
 - \int_{V_i} \mathbb S_2(\partial) (\nabla_{\bx} \times\partial_j\partial_k \hat \bb) \chi^2\partial_j\partial_k \hat \bb\, {\rm d}x\\
 = 
-\int_{V_i}  \partial_j\partial_k \left(\nabla_{\bx} \times\hat \bb + \mathfrak D\nabla_{\bx} \hat \bb\right) \left(\mathbb S_1(\partial) + \mathbb S_2(\partial)\right)^T\left(\chi^2\partial_j\partial_k \hat \bb\right)\, {\rm d}x  + l.o.t.\\
 - \int_{V_i} \left(\mathbb S_1(\partial) + \mathbb S_2(\partial)\right) (\partial_j\partial_k \mathfrak D\nabla_{\bx} \hat \bb) \chi^2\partial_j\partial_k \hat \bb\, {\rm d}x 
 - \int_{V_i} \mathbb S_2(\partial) (\nabla_{\bx} \times\partial_j\partial_k \hat \bb) \chi^2\partial_j\partial_k \hat \bb\, {\rm d}x\\
 = 
\int_{V_i}  \left(\partial_j\partial_k \nabla_{\bx} \times\hat \bb\right) \left(\nabla_{\bx} \times\left(\chi^2\partial_j\partial_k \hat \bb\right)\right)\, {\rm d}x\\
-\int_{V_i}  \partial_j\partial_k \mathfrak D\nabla_{\bx} \hat \bb \left(\mathbb S_1(\partial) + \mathbb S_2(\partial)\right)^T\left(\chi^2\partial_j\partial_k \hat \bb\right)\, {\rm d}x\\
-\int_{V_i}  \partial_j\partial_k \nabla_{\bx} \times\hat \bb \mathbb S_2(\partial)^T\left(\chi^2\partial_j\partial_k \hat \bb\right)\, {\rm d}x
  + l.o.t.\\
 - \int_{V_i} \left(\mathbb S_1(\partial) + \mathbb S_2(\partial)\right) (\partial_j\partial_k \mathfrak D\nabla_{\bx} \hat \bb) \chi^2\partial_j\partial_k \hat \bb\, {\rm d}x 
 - \int_{V_i} \mathbb S_2(\partial) (\nabla_{\bx} \times\partial_j\partial_k \hat \bb) \chi^2\partial_j\partial_k \hat \bb\, {\rm d}x\\
\geq \int_{V_i} |\nabla_{\bx}  \chi \partial_j\partial_k \hat b|\, {\rm d}x - \delta\int_{V_i}|\nabla_{\bx}  \chi \partial_j\partial_k \hat b| - l.o.t.
\end{multline*}
Assuming $\delta $ is small enough we obtain from \eqref{eq.indukce.1}
$$
\frac 12\frac d{dt}\int_{V_i}|\chi\partial_j\partial_k \hat \bb|^2\, {\rm d}x + \int_{V_i} |\nabla_{\bx}\chi  \partial_j\partial_k \hat \bb|^2\, {\rm d}x \leq C\|\bm f_1\|_{H^1_{(\mathcal C)}}.
$$
whenever $j,k\in \{1,2\}$.
\end{proof}

In the spirit of $\eqref{xi-u}$, $\eqref{bo}$, $\eqref{xi-u-t-1}$ and  $\eqref{bo-t}$,  Sobolev's inequality and direct calculation lead to the following set of estimates
\begin{equation*}
\begin{split}
\norm{f_0}_{L^2_{(\mathcal{C})}}\leq C\norm{\varrho}_{H^2_{(\mathcal{C})}}\norm{\mathrm{div}_{\bx}\bv}_{L^2_{(\mathcal{C})}},
\end{split}
\end{equation*} 

\begin{equation*}
\norm{\nabla_{\bx}f_0}_{L^2_{(\mathcal{C})}}\leq C\left(\norm{\nabla_{\bx}\varrho\mathrm{div}_{\bx}\bv}_{L^2_{(\mathcal{C})}}+\norm{\varrho\nabla_{\bx}\mathrm{div}_{\bx}\bv}_{L^2_{(\mathcal{C})}}\right)\leq C\norm{\varrho}_{H^2_{(\mathcal{C})}}\norm{\bv}_{H^2_{(\mathcal{C})}},
\end{equation*}

\begin{equation*}
\begin{split}
\norm{\nabla^2_{\bx}f_0}_{L^2_{(\mathcal{C})}}&\leq C\left(\norm{\nabla^2_{\bx}\varrho\mathrm{div}_{\bx}\bv}_{L^2_{(\mathcal{C})}}+\norm{\nabla_{\bx}\varrho\nabla_{\bx}\mathrm{div}_{\bx}\bv}_{L^2_{(\mathcal{C})}}+\norm{\varrho\nabla^2_{\bx}\mathrm{div}_{\bx}\bv}_{L^2_{(\mathcal{C})}}\right)\leq C\norm{\varrho}_{H^2_{(\mathcal{C})}}\norm{\nabla_{\bx}\bv}_{H^2_{(\mathcal{C})}},
\end{split}
\end{equation*}

\begin{equation*}
\begin{split}
\norm{f_0}^2_{H^2_{(\mathcal{C})}}\leq C\norm{\varrho}^2_{H^2_{(\mathcal{C})}}\norm{\bv}^2_{H^3_{(\mathcal{C})}},
\end{split}
\end{equation*}

\begin{equation*}
\begin{split}
\norm{\bm{f}}_{L^2_{(\mathcal{C})}}&\leq C(\norm{\varrho|\nabla^2_{\bx}\bv|}_{L^2_{(\mathcal{C})}}+\norm{\varrho|\nabla_{\bx}\varrho|}_{L^2_{(\mathcal{C})}}+|(\boldsymbol{\xi}_{\bu})_t|+|(\bo_{\bu})_t|+|\bo_{\bu}|\norm{\bu}_{L^2_{(\mathcal{C})}}+|\bo_{\bu}|\norm{\bv}_{L^2_{(\mathcal{C})}}\\
&\quad+\norm{|\bv||\nabla_{\bx}\bv|}_{L^2_{(\mathcal{C})}}+\norm{\bb}_{L^\infty_{(\mathcal{C})}}\norm{\nabla_{\bx}\bb}_{L^2_{(\mathcal{C})}})\\
&\leq C(\norm{\varrho}_{H^1_{(\mathcal{C})}}\norm{\nabla^2_{\bx}\bv}_{H^1_{(\mathcal{C})}}+\norm{\varrho}_{H^1_{(\mathcal{C})}}\norm{\nabla_{\bx}\varrho}_{H^1_{(\mathcal{C})}}+\norm{\bu_t}_{L^2_{(\mathcal{C})}}+\norm{\bv_t}_{L^2_{(\mathcal{C})}}\\
&\quad+\norm{\bv}_{H^2_{(\mathcal{C})}}^2+\norm{\bu}_{L^2_{(\mathcal{C})}}^2+\norm{\bb}_{L^\infty_{(\mathcal{C})}}\norm{\nabla_{\bx}\bb}_{L^2_{(\mathcal{C})}}),
\end{split}
\end{equation*}

\begin{equation*}
\begin{split}
\norm{\nabla_{\bx}\bm{f}}_{L^2_{(\mathcal{C})}}&\leq C(\norm{|\nabla_{\bx}\varrho||\nabla^2_{\bx}\bv|}_{L^2_{(\mathcal{C})}}+\norm{|\varrho||\nabla^3_{\bx}\bv|}_{L^2_{(\mathcal{C})}}+\norm{|\nabla_{\bx}\varrho||\nabla_{\bx}\varrho|}_{L^2_{(\mathcal{C})}}+\norm{\varrho|\nabla^2_{\bx}\varrho|}_{L^2_{(\mathcal{C})}}\\
&\quad +|(\bo_{\bu})_t|+|\bo_{\bu}|\norm{\nabla_{\bx}\bv}_{L^2_{(\mathcal{C})}}+|\bo_{\bu}|^2+\norm{|\nabla_{\bx}\bv||\nabla_{\bx}\bv|}_{L^2_{(\mathcal{C})}}\\
&\quad +\norm{\nabla_{\bx}\bv}_{L^2_{(\mathcal{C})}}|\bo_{\bu}|+\norm{|\bv||\nabla^2_{\bx}\bv|}_{L^2_{(\mathcal{C})}}+\norm{\bb}_{H^2_{(\mathcal{C})}}^2+\norm{\varrho}_{H^2_{(\mathcal{C})}}\norm{\bb}_{H^2_{(\mathcal{C})}}^2)\\
&\leq C(\norm{\varrho}_{H^2_{(\mathcal{C})}}\norm{\bv}_{H^3_{(\mathcal{C})}}
+\norm{\varrho}^2_{H^2_{(\mathcal{C})}}+\norm{\bu_t}_{L^2_{(\mathcal{C})}}+\norm{\bv_t}_{L^2_{(\mathcal{C})}}\\
&\quad+\norm{\bu}^2_{L^2_{(\mathcal{C})}}+\norm{\bv}^2_{H^2_{(\mathcal{C})}}+\norm{\bb}^2_{H^2_{(\mathcal{C})}}+\norm{\varrho}_{H^2_{(\mathcal{C})}}\norm{\bb}_{H^2_{(\mathcal{C})}}^2),
\end{split}
\end{equation*}

\begin{equation*}
\begin{split}
\norm{\bm{f}}^2_{H^1_{(\mathcal{C})}}&\leq C(\norm{\varrho}^2_{H^2_{(\mathcal{C})}}\norm{\bv}^2_{H^3_{(\mathcal{C})}}+\norm{\varrho}^4_{H^2_{(\mathcal{C})}}+\norm{\bu_t}^2_{L^2_{(\mathcal{C})}}\\
&\quad+\norm{\bv_t}^2_{L^2_{(\mathcal{C})}}+\norm{\bu}^4_{L^2_{(\mathcal{C})}}+\norm{\bv}^4_{H^2_{(\mathcal{C})}}+\norm{\bb}^4_{H^2_{(\mathcal{C})}}+\norm{\varrho}_{H^2_{(\mathcal{C})}}^2\norm{\bb}_{H^2_{(\mathcal{C})}}^4),
\end{split}
\end{equation*}

\begin{equation*}
\begin{split}
\norm{\bm{f}_1}_{L^2_{(\mathcal{C})}}&\leq C(|\bo_{\bu}|\norm{\bb}_{L^2_{(\mathcal{C})}}+(|\bo_{\bu}|+|\boldsymbol{\xi}_{\bu}|)\norm{\nabla_{\bx}\bb}_{L^2_{(\mathcal{C})}}\\
&\quad+\norm{\bb}_{L^\infty_{(\mathcal{C})}}\norm{\nabla_{\bx}\bu}_{L^2_{(\mathcal{C})}}+\norm{\bu}_{L^\infty_{(\mathcal{C})}}\norm{\nabla_{\bx}\bb}_{L^2_{(\mathcal{C})}})\\
&\leq C((\norm{\bu}_{L^2_{(\mathcal{C})}}+\norm{\bv}_{L^2_{(\mathcal{C})}})\norm{\bb}_{H^1_{(\mathcal{C})}}+\norm{\bb}_{H^2_{(\mathcal{C})}}\norm{\nabla_{\bx}\bu}_{L^2_{(\mathcal{C})}}+\norm{\bu}_{H^2_{(\mathcal{C})}}\norm{\nabla_{\bx}\bb}_{L^2_{(\mathcal{C})}}),
\end{split}
\end{equation*}

\begin{equation*}
\begin{split}
\norm{\nabla_{\bx}\bm{f}_1}_{L^2_{(\mathcal{C})}}&\leq C (|\bo_{\bu}|\norm{\nabla_{\bx}\bb}_{L^2_{(\mathcal{C})}}+(|\boldsymbol{\xi}_{\bu}|+|\bo_{\bu}|)\norm{\nabla^2_{\bx}\bb}_{L^2_{(\mathcal{C})}}\\
&\quad+\norm{\nabla_{\bx}^2\bb}_{L^2_{(\mathcal{C})}}\norm{\bu}_{L^\infty_{(\mathcal{C})}}+\norm{|\nabla_{\bx}\bu||\nabla_{\bx}\bb|}_{L^2_{(\mathcal{C})}}+\norm{\nabla^2_{\bx}\bu}_{L^2_{(\mathcal{C})}}\norm{\bb}_{L^\infty_{(\mathcal{C})}})\\
&\leq C((\norm{\bu}_{L^2_{\mathcal{C}}}+\norm{\bv}_{L^2_{(\mathcal{C})}})\norm{\bb}_{H^2_{(\mathcal{C})}}+\norm{\bb}_{H^2_{(\mathcal{C})}}\norm{\bu}_{H^2_{(\mathcal{C})}}),
\end{split}
\end{equation*}

\begin{equation*}
\begin{split}
\norm{\bm{f}_1}^2_{H^1_{(\mathcal{C})}}\leq C((\norm{\bu}^2_{L^2_{(\mathcal{C})}}+\norm{\bv}^2_{L^2_{(\mathcal{C})}})\norm{\bb}^2_{H^2_{(\mathcal{C})}}+\norm{\bb}^2_{H^2_{(\mathcal{C})}}\norm{\bu}^2_{H^2_{(\mathcal{C})}}).
\end{split}
\end{equation*}

Thanks to $\bu_t|_{\mathcal{B}}=((\bo_{\bu})_t\times{\bx}+(\bxi_{\bu})_t)|_{\mathcal{B}}$, we get 
\begin{equation}\label{w-u-t}
\begin{split}
|(\bo_{\bu})_t|\sim\norm{(\bo_{\bu})_t\times \bx}_{L^2_{(\mathcal{B})}}\leq C\norm{\bu_t}_{L^2_{(\mathcal{B})}}+C|(\bxi_{\bu})_t|, 
\end{split}
\end{equation}
which together with $\eqref{xi-u-t-1}$, $\eqref{w-u-t}$ and $\bv=\bu-\bo_{\bu}\times \bx-\boldsymbol{\xi}_{\bu}$, 
yield
\begin{equation}\label{v-t-u-t}
\begin{split}
\norm{\bv_t}_{L^2_{(\mathcal{C})}}&\leq C(\norm{\bu_t}_{L^2_{(\mathcal{C})}}+\norm{(\bo_{\bu})_t \times \bx}_{L^2_{(\mathcal{C})}}+|(\boldsymbol{\xi}_{\bu})_t|)\\
&\leq C(\norm{\bu_t}_{L^2_{(\mathcal{C})}}+|(\bo_{\bu})_t|+|(\boldsymbol{\xi}_{\bu})_t|)\\
&\leq C(\norm{\bu_t}_{L^2_{(\mathcal{C})}}+\norm{\bu_t}_{L^2_{(\mathcal{B})}}+\norm{\bv}_{L^2_{(\mathcal{C})}}(\norm{\nabla_{\bx}\bv}_{L^2_{(\mathcal{C})}}+\norm{\bv}_{L^2_{(\mathcal{C})}}+\norm{\bu}_{L^2_{(\mathcal{C})}}))\\
&\leq C(\norm{\bu_t}_{L^2_{(\mathcal{S})}}
+\norm{\bv}_{L^2_{(\mathcal{C})}}(\norm{\nabla_{\bx}\bv}_{L^2_{(\mathcal{C})}}+\norm{\bv}_{L^2_{(\mathcal{C})}}+\norm{\bu}_{L^2_{(\mathcal{C})}}))
\end{split}
\end{equation}

Combining $\eqref{first-order-v}$, $\eqref{first-order-b}$, $\eqref{first-order-rho}$, $\eqref{first-order-rho-t}$, $\eqref{second-order}$, $\eqref{zero-order}$ and $\eqref{ene-l-2-t}$,  such that the terms $\norm{\nabla_\bx\bv_t}_{L^2_{(\mathcal{C})}}+\norm{\nabla_\bx\bb_t}_{L^2_{(\mathcal{C})}}$ appearing on the right-hand side of $\eqref{second-order}$ can be absorbed by left-hand terms in  $\eqref{ene-l-2-t}$, the right-hand side term $\norm{\varrho}^2_{H^1_{(\mathcal{C})}}$ in $\eqref{first-order-v}$, the right-hand side term $\norm{\varrho}^2_{H^2_{(\mathcal{C})}}$ in $\eqref{first-order-rho}$ and the right-hand side term $\norm{\bv}^2_{H^2_{(\mathcal{C})}}$ in $\eqref{first-order-rho-t}$  can be  absorbed by the left-hand side terms in $\eqref{second-order}$, we then use $\eqref{v-t-u-t}$ to get 
\begin{equation}\label{three-order}
\begin{split}
&\frac{\mathrm{d}}{\mathrm{d}t}(\norm{\mathcal{D}\bv}_{L^2_{(\mathcal{C})}}^2+\norm{\mathrm{div}_{\bx}\bv}_{L^2_{(\mathcal{C})}}^2+\norm{\varrho}_{H^1_{(\mathcal{C})}}^2+\norm{\varrho_t}_{L^2_{(\mathcal{C})}}^2+
\norm{\sqrt{\rho_{\mathcal{S}}}\bu}_{L^2_{(\mathcal{S})}}^2+\norm{\sqrt{\rho_{\mathcal{S}}}\bu_t}_{L^2_{(\mathcal{S})}}^2\\
&
\hspace{0.5cm}+\norm{\bb}_{L^2_{(\mathcal{C})}}^2+\norm{\nabla_{\bx}\times\bb}_{L^2_{(\mathcal{C})}}^2+\norm{\bb_t}_{L^2_{(\mathcal{C})}}^2)\\
&+\norm{\bv}_{H^3_{(\mathcal{C})}}^2+\norm{\bb}_{H^3_{(\mathcal{C})}}^2+\norm{\varrho}_{H^2_{(\mathcal{C})}}^2+\norm{\varrho_t}_{H^1_{\mathcal{(C)}}}^2+\norm{\nabla_\bx\bv_t}_{L^2_{(\mathcal{C)}}}^2+\norm{\nabla_\bx\bb_t}_{L^2_{(\mathcal{C)}}}^2\\
&\leq \epsilon(\norm{\bv}^2_{H^3_{(\mathcal{C})}}+\norm{\bu}^2_{L^2_{(\mathcal{C})}}+\norm{\bu_t}^2_{L^2_{(\mathcal{S})}}+\norm{\varrho_t}^2_{L^2_{(\mathcal{C})}}+\norm{\bb_t}^2_{L^2_{(\mathcal{C})}})\\
 &+C(\norm{\mathrm{div}_{\bx}\bv}^2_{H^2_{(\mathcal{C})}}+\norm{\bb}^2_{L^2_{(\mathcal{C})}}+\norm{\bb_t}^2_{L^2_{(\mathcal{C})}})+Re_1+Re_2,
\end{split}
\end{equation}
where 
\begin{equation*}
\begin{split}
Re_2&=:C(\norm{\varrho}^4_{L^2_{(\mathcal{C})}}+\norm{{\nabla_{\bx}\varrho}}^2_{L^2_{(\mathcal{C})}}\norm{{\varrho}}^2_{L^\infty_{(\mathcal{C})}}+\norm{\bb}^2_{H^2_{(\mathcal{C})}}\norm{\bu}^2_{L^2_{(\mathcal{C})}}\\
&+\norm{\bb}^2_{H^2_{(\mathcal{C})}}\norm{\bb}^2_{L^2_{(\mathcal{C})}}+\norm{\varrho}^4_{H^2_{(\mathcal{C})}}+\norm{\bv}^2_{H^2_{(\mathcal{C})}}\norm{\varrho}^2_{H^2_{(\mathcal{C})}}+\norm{\bm{f}}^2_{H^1_{(\mathcal{C})}}+\norm{f_0}^2_{H^1_{(\mathcal{C})}}+\norm{\bm{f}_1}^2_{H^1_{(\mathcal{C})}}\\
&+\norm{\bv}^2_{L^2_{(\mathcal{C})}}(\norm{\nabla_{\bx}\bv}^2_{L^2_{(\mathcal{C})}}+\norm{\bv}^2_{L^2_{(\mathcal{C})}}+\norm{\bu}^2_{L^2_{(\mathcal{C})}})).
\end{split}
\end{equation*}

Putting $\eqref{three-order}$, $\eqref{l-2-t-u-new}$ and $\eqref{l-2-t-b-new}$ together, such that the right-hand side term $\norm{\nabla_{\bx}\bv_t}_{L^2_{(\mathcal{C})}}^2$ in $\eqref{l-2-t-u-new}$ and the right-hand side term  $\norm{\nabla_{\bx}\bb_t}_{L^2_{(\mathcal{C})}}^2$ in $\eqref{l-2-t-b-new}$ can be absorbed by the left-hand side terms in $\eqref{three-order}$, then we derive that 
\begin{equation}\label{three-order-1}
\begin{split}
&\frac{\mathrm{d}}{\mathrm{d}t}(\norm{\mathcal{D}\bv}_{L^2_{(\mathcal{C})}}^2+\norm{\mathrm{div}_{\bx}\bv}_{L^2_{(\mathcal{C})}}^2+\norm{\varrho}_{H^2_{(\mathcal{C})}}^2+\norm{\varrho_t}_{L^2_{(\mathcal{C})}}^2+
\norm{\sqrt{\rho_{\mathcal{S}}}\bu}_{L^2_{(\mathcal{S})}}^2+\norm{\sqrt{\rho_{\mathcal{S}}}\bu_t}_{L^2_{(\mathcal{S})}}^2\\
&\hspace{0.5cm}+\norm{\bb}_{L^2_{(\mathcal{C})}}^2+\norm{\nabla_{\bx}\times\bb}_{L^2_{(\mathcal{C})}}^2+\norm{\bb_t}_{L^2_{(\mathcal{C})}}^2)\\
&+\norm{\bv}_{H^3_{(\mathcal{C})}}^2+\norm{\bb}_{H^3_{(\mathcal{C})}}^2+\norm{\varrho}_{H^2_{(\mathcal{C})}}^2+\norm{\varrho_t}_{H^1_{\mathcal{(C)}}}^2+\norm{\nabla_x\bv_t}_{L^2_{(\mathcal{C)}}}^2+\norm{\bu_t}_{L^2_{(\mathcal{S)}}}^2+\norm{\bb_t}_{H^1_{(\mathcal{C)}}}^2\\
&\leq \epsilon(\norm{\bv}^2_{H^3_{(\mathcal{C})}}+\norm{\nabla_{\bx}\bb}^2_{L^2_{(\mathcal{C})}}+\norm{\bu}^2_{L^2_{(\mathcal{C})}}+\norm{\bu_t}^2_{L^2_{(\mathcal{S})}}+\norm{\varrho}^2_{L^2_{(\mathcal{C})}}+\norm{\varrho_t}^2_{L^2_{(\mathcal{C})}}+\norm{\bb_t}^2_{L^2_{(\mathcal{C})}})\\
 &+C(\norm{\mathrm{div}_{\bx}\bv}^2_{H^2_{(\mathcal{C})}}+\norm{\bb}^2_{L^2_{(\mathcal{C})}}+\norm{\bb_t}^2_{L^2_{(\mathcal{C})}})+Re_1+Re_2+Re_3,
\end{split}
\end{equation}
where 
\begin{equation*}
\begin{split}
Re_3&=:C(\norm{\bu}^4_{L^2_{(\mathcal{C})}}+\norm{\bv}^4_{L^2_{(\mathcal{C})}}+\norm{\varrho}^4_{H^2_{(\mathcal{C})}} +\norm{\bv}^2_{L^2_{(\mathcal{C})}}\norm{\nabla_{\bx}\bv}^2_{L^\infty_{(\mathcal{C})}}+\norm{\bb}^2_{H^2_{(\mathcal{C})}}\norm{\bb}^2_{L^2_{(\mathcal{C})}}\\
&+(\norm{\bu}^2_{L^2_{(\mathcal{C})}}+\norm{\bv}^2_{L^2_{(\mathcal{C})}})\norm{\bb}^2_{L^2_{(\mathcal{C})}}+\norm{\bv}_{L^\infty_{(\mathcal{C})}}^2\norm{\nabla_{\bx}\bb}^2_{L^2_{(\mathcal{C})}}+
(\norm{\nabla_{\bx}\bv}^2_{L^2_{(\mathcal{C})}}+\norm{\bu}^2_{L^2_{(\mathcal{C})}}+\norm{\bv}^2_{L^2_{(\mathcal{C})}})\norm{\bb}^2_{L^\infty_{(\mathcal{C})}})).
\end{split}
\end{equation*}

To eliminate the right-hand side term $\norm{\mathrm{div}_{\bx}\bv}^2_{H^2_{(\mathcal{C})}}$ in $\eqref{three-order-1}$,  we combine $\eqref{three-order-1}$ with $\eqref{second-order-t}$, and use $\eqref{combine-first}$ to absorb the right-hand side terms $\norm{\bv}^2_{H^2_{(\mathcal{C})}}$, $\norm{\bb}^2_{H^2_{(\mathcal{C})}}$ and $\|\varrho\|^2_{H^1_{(\mathcal{C})}}$ in $\eqref{second-order-t}$, then the above procedures arise the term $\norm{\mathrm{div}_{\bx}\bv}^2_{H^1_{(\mathcal{C})}}$. We use $\eqref{first-order-t}$ to absorb the term $\norm{\mathrm{div}_{\bx}\bv}^2_{H^1_{(\mathcal{C})}}$ arising from the right-hand side terms in above estimates, then put the resulting estimate, $\eqref{zero-order}$, $\eqref{ene-l-2-t}$, $\eqref{l-2-t-b-new}$, and $\eqref{v-t-u-t}$ together to absorb the right-hand side terms  $\norm{\bv}^2_{H^1_{(\mathcal{C})}}$, $\norm{\bb}^2_{H^1_{(\mathcal{C})}}$, $\norm{\bv_t}^2_{H^1_{(\mathcal{C})}}$ and $\norm{\bb_t}^2_{H^1_{(\mathcal{C})}}$. (Here we have used the Poincare inequality $\norm{\bv}_{L^2_{(\mathcal{C})}}\leq C\norm{\nabla_{\bx}\bv}_{L^2_{(\mathcal{C})}}$, and the Gagliardo-Nirenberg inequality $\norm{\bb}_{L^2_{(\mathcal{C})}}\leq C\norm{\bb}_{L^6_{(\mathcal{C})}}\leq C\norm{\nabla_{\bx}\bb}_{L^2_{(\mathcal{C})}}$ in \cite[Lemma A.1]{CHPS-23} and \cite{Nir-1959}.) Lastly, the smallness of $\epsilon$ helps us to get that 
\begin{equation}\label{three-order-2}
\begin{split}
&\frac{\mathrm{d}}{\mathrm{d}t}\mathsf{E}+\norm{\bv}_{H^3_{(\mathcal{C})}}^2+\norm{\bb}_{H^3_{(\mathcal{C})}}^2+\norm{\varrho}_{H^2_{(\mathcal{C})}}^2+\norm{\varrho_t}_{H^1_{\mathcal{(C)}}}^2+\norm{\bv_t}_{H^1_{(\mathcal{C)}}}^2+\norm{\bu_t}_{L^2_{(\mathcal{S)}}}^2+\norm{\bb_t}_{H^1_{(\mathcal{C)}}}^2\\
&\leq \epsilon\norm{\bu}^2_{L^2_{(\mathcal{C})}}+Re_1+Re_2+Re_3+Re_4,
\end{split}
\end{equation}
where 
\begin{equation}\label{define-e}
\begin{split}
&\mathsf{E}:=([\bv]_{2,2}^2+\norm{\varrho}_{H^2_{(\mathcal{C})}}^2+\norm{\varrho_t}_{L^2_{(\mathcal{C})}}^2+
\norm{\sqrt{\rho_{\mathcal{S}}}\bu}_{L^2_{(\mathcal{S})}}^2+\norm{\sqrt{\rho_{\mathcal{S}}}\bu_t}_{L^2_{(\mathcal{S})}}^2+[\bb]_{2,2}^2+\norm{\bb_t}_{L^2_{(\mathcal{C})}}^2), 
\end{split}
\end{equation}

\begin{equation*}
\begin{split}
Re_4:=C\left(\norm{\varrho}^4_{H^2_{(\mathcal{C})}}+\norm{\bm{f}}^2_{H^1_{(\mathcal{C})}}+\norm{f_0}^2_{H^2_{(\mathcal{C})}}+\norm{\bm{f}_1}^2_{H^1_{(\mathcal{C})}}\right).
\end{split}
\end{equation*}
Here, we note that the remaining terms may contain some same terms in $Re_i$ with $i=1,2,3,4$.

The dissipative terms in $\eqref{three-order-2}$ can not absorb $\epsilon \norm{\bu}^2_{L^2_{(\mathcal{C})}}$, therefore we add term $|\bxi_{\bu}|^2+|\bo_{\bu}|^2$ to the both sides of $\eqref{three-order-2}$. 

By $\norm{\bu}^2_{L^2_{(\mathcal{C})}}\leq C(\norm{\bv}^2_{L^2_{(\mathcal{C})}}+|\bxi_{\bu}|^2+|\bo_{\bu}|^2)$ we get 
\begin{equation}\label{E-D-re}
\begin{split}
&\frac{\mathrm{d}}{\mathrm{d}t}\mathsf{E}+\mathsf{D}\leq Re_1+Re_2+Re_3+Re_4+C\left(|\bxi_{\bu}|^2+|\bo_{\bu}|^2\right),
\end{split}
\end{equation}
where
\begin{equation*}
\begin{split}
&\mathsf{D}:=\norm{\bv}_{H^3_{(\mathcal{C})}}^2+\norm{\bb}_{H^3_{(\mathcal{C})}}^2+\norm{\varrho}_{H^2_{(\mathcal{C})}}^2+\norm{\varrho_t}_{H^1_{\mathcal{(C)}}}^2+\norm{\bv_t}_{H^1_{(\mathcal{C)}}}^2+\norm{\bu_t}_{L^2_{(\mathcal{S)}}}^2+\norm{\bb_t}_{H^1_{(\mathcal{C)}}}^2+|\bxi_{\bu}|^2+|\bo_{\bu}|^2. 
\end{split}
\end{equation*}

Direct calculations lead to 
\begin{equation*}
	\begin{split}
		Re_1+Re_2+Re_3+Re_4\leq C\mathsf{D}(\mathsf{E}+\mathsf{E}^2), 
\end{split}
\end{equation*}
and \eqref{E-D-re} gives that 
\begin{equation}\label{E-D}
\begin{split}
&\frac{\mathrm{d}}{\mathrm{d}t}\mathsf{E}+\mathsf{D}\leq \mathsf{D}(\mathsf{E}+\mathsf{E}^2)+|\bxi_{\bu}|^2+|\bo_{\bu}|^2.
\end{split}
\end{equation}
Next, the definitions of $\mathsf D$ and $\mathsf E$ yield
\begin{equation}\label{E-D-1}
\begin{split}
\mathsf{E}\leq C\mathsf{D}.
\end{split}
\end{equation}

Thanks to Sobolev's embedding theorem, we could choose a small positive constant $\epsilon_1$ satisfying $\mathsf{E}(t)\leq \epsilon_1$ such that 
\begin{equation}\label{bound-density}
\begin{split}
\frac{1}{2}\bar{\rho}\leq \rho(t, \bx)\leq \frac{3}{2}\bar{\rho}.
\end{split}
\end{equation}

The energy estimate
\begin{multline*}
\int_{\mathcal{C}}\left(\frac{1}{2}\rho|\bu|^2+\frac{1}{2}|\bb|^2+\frac{1}{\gamma-1}p(\rho)\right)\ddx+\int_0^t\int_{\mathcal{C}}(S(\nabla_{\bx}\bv):\nabla_{\bx}\bv+|\nabla_{\bx}\times\bb|^2)\ddx\\
=\int_{\mathcal{C}}\left(\frac{1}{2}\rho_0|\bu_0|^2+\frac{1}{2}|\bb_0|^2+\frac{1}{\gamma-1}p(\rho_0)\right)\ddx\\
+\int_0^t\left[\bo\cdot\int_{\partial{\mathcal{C}}}\bx\times \left( T(\bu, p(\rho)) + \bb\otimes\bb - \frac 12 |\bb|^2\mathbb I\right)\cdot \boldsymbol{n}\right]\ddx\\
+\int_0^t\left[\bxi\cdot\int_{\partial{\mathcal{C}}} \left( T(\bu, p(\rho)) + \bb\otimes\bb - \frac 12 |\bb|^2\mathbb I\right)\cdot \boldsymbol{n}\right]\ddx 
\end{multline*}
leads to
\begin{equation}\label{zero-f}
\begin{split}
&\mathsf{F}+\int_0^t\int_{\mathcal{C}}(S(\nabla_{\bx}\bv):\nabla_{\bx}\bv+|\nabla_{\bx}\times\bb|^2)\ddx=\mathsf{F}(0),
\end{split}
\end{equation}
where 
\begin{equation}\label{F}
\begin{split}
\mathsf{F}=:\int_{\mathcal{C}}\left(\frac{1}{2}\rho|\bu|^2+\frac{1}{2}|\bb|^2+\frac{1}{\gamma-1}p(\rho)\right)\ddx+\frac{1}{2}(\bo_{\bu}\cdot \boldsymbol{I}_c\cdot \bo_{\bu}+m_{\mathcal{B}}|\bxi_{\bu}|^2),
\end{split}
\end{equation}
therefore, $\eqref{zero-f}$ gives a uniform in time bound 
\begin{equation}\label{uniform-bound}
\begin{split}
\sup_{0\leq t\leq T}(|\bo_{\bu}|^2+|\bxi_{\bu}|^2)\leq C \mathsf{F}(0).
\end{split}
\end{equation}

\begin{lemma}\label{E}
 For given $\mathsf{E}$ in $\eqref{define-e}$ and $\mathsf{F}$ in $\eqref{F}$,  $t\in [0, T)$ there is a sufficiently small constant $\epsilon_0\in(0, 1)$, such that if 
 \begin{equation}\label{assume}
\begin{split}
\mathsf{E}(0)+\mathsf{F}^{\frac{1}{2}}(0)\leq \epsilon_0, 
 \end{split}
\end{equation}
then one has 
 \begin{equation}\label{result}
\begin{split}
\sup_{0\leq t\leq T}\mathsf{E}(t)\leq \epsilon_0.
 \end{split}
 \end{equation}
\end{lemma}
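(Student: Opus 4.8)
The plan is to run a continuity (bootstrap) argument built on the differential inequality \eqref{E-D}, the coercivity \eqref{E-D-1}, and the energy bound \eqref{uniform-bound}. First I would fix the constant $\epsilon_0\in(0,1)$ small enough that three requirements hold at once: $\epsilon_0+\epsilon_0^2\le\frac12$ (so that the nonlinear product $\mathsf D(\mathsf E+\mathsf E^2)$ on the right of \eqref{E-D} can be absorbed into $\mathsf D$ on the left), $\epsilon_0\le\epsilon_1$ where $\epsilon_1$ is the Sobolev threshold appearing in \eqref{bound-density} guaranteeing $\frac12\bar\rho\le\rho\le\frac32\bar\rho$ (so that the density bounds \eqref{bound-density-1} underlying all the estimates of Section~3, and in particular \eqref{E-D}, are legitimate), and $2C^2\epsilon_0\le 1$, where $C$ is the constant from \eqref{E-D-1} and \eqref{uniform-bound}. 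Note also that since $p(\rho_0)=a\rho_0^\gamma$ with $\rho_0>0$, we have $\mathsf F(0)\ge\frac{a}{\gamma-1}\int_{\mathcal C}\rho_0^\gamma\,\mathrm dx>0$, so the hypothesis \eqref{assume} forces $\mathsf E(0)<\epsilon_0$ strictly; this strictness is what makes the continuity argument close.

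Next I would set $T^\star:=\sup\{\tau\in[0,T): \mathsf E(t)\le\epsilon_0 \text{ for all } t\in[0,\tau]\}$. By the local existence result (Theorem~\ref{local-existence}) the solution exists on a nontrivial interval, and the regularity class stated there makes $t\mapsto\mathsf E(t)$ continuous; together with $\mathsf E(0)<\epsilon_0$ this yields $T^\star>0$. On $[0,T^\star]$ the density bounds \eqref{bound-density} are in force, hence \eqref{E-D} is available, and there $\mathsf E+\mathsf E^2\le\epsilon_0+\epsilon_0^2\le\frac12$, so \eqref{E-D} together with \eqref{uniform-bound} gives
\[
\frac{\mathrm d}{\mathrm dt}\mathsf E+\tfrac12\mathsf D\le|\bxi_{\bu}|^2+|\bo_{\bu}|^2\le C\,\mathsf F(0),\qquad t\in[0,T^\star].
\]

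Now I would invoke \eqref{E-D-1}, which reads $\mathsf E\le C\mathsf D$, i.e. $\mathsf D\ge\mathsf E/C$, so the previous inequality implies $\frac{\mathrm d}{\mathrm dt}\mathsf E+\frac{1}{2C}\mathsf E\le C\mathsf F(0)$. Writing this as $\frac{\mathrm d}{\mathrm dt}\bigl(\mathsf E-2C^2\mathsf F(0)\bigr)+\frac{1}{2C}\bigl(\mathsf E-2C^2\mathsf F(0)\bigr)\le0$ and integrating with the factor $e^{t/(2C)}$, we obtain, for every $t\in[0,T^\star]$,
\[
\mathsf E(t)\le\mathsf E(0)+2C^2\mathsf F(0).
\]
Using $\mathsf F(0)=\mathsf F^{1/2}(0)\cdot\mathsf F^{1/2}(0)\le\epsilon_0\,\mathsf F^{1/2}(0)$ from \eqref{assume} and the choice $2C^2\epsilon_0\le1$, this becomes $\mathsf E(t)\le\mathsf E(0)+\mathsf F^{1/2}(0)\le\epsilon_0$, and in fact $\mathsf E(T^\star)<\epsilon_0$ strictly since $\mathsf F(0)>0$. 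If $T^\star<T$, continuity of $\mathsf E$ would then produce a slightly larger interval on which $\mathsf E\le\epsilon_0$, contradicting the definition of $T^\star$; hence $T^\star=T$ and \eqref{result} holds. (Combining this a priori bound with the continuation criterion in Theorem~\ref{local-existence}, the same argument extends the local solution indefinitely and yields the global existence asserted in Theorem~\ref{global-existence-strong-solution}.)

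The genuine difficulty is the circularity built into the scheme: inequality \eqref{E-D} is known only where the density stays in $[\frac12\bar\rho,\frac32\bar\rho]$, which via the embedding $H^2\hookrightarrow L^\infty$ in \eqref{bound-density} is guaranteed only where $\mathsf E$ is small. The continuity argument is precisely the device that breaks this loop, but it succeeds only because the improved bound $\mathsf E(t)\le\mathsf E(0)+\mathsf F^{1/2}(0)$ lies strictly below $\epsilon_0$; that in turn relies both on the positivity $\mathsf F(0)>0$ (from the pressure law) and on the square-root form of the smallness condition \eqref{assume}. A secondary point to verify is that the energy identity \eqref{zero-f}--\eqref{F}, hence \eqref{uniform-bound}, holds for the strong solution on the entire interval (using the positive-definiteness of $\boldsymbol I_c$), so that the forcing $|\bxi_{\bu}|^2+|\bo_{\bu}|^2$ on the right of the differential inequality is controlled by the data alone and need not be re-fed into the bootstrap.
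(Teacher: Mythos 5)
Your proof is correct and follows essentially the same route as the paper: a continuity/bootstrap argument combining the differential inequality \eqref{E-D}, the coercivity $\mathsf E\le C\mathsf D$ from \eqref{E-D-1}, and the bound $|\bo_{\bu}|^2+|\bxi_{\bu}|^2\le C\mathsf F(0)\le C\epsilon_0^2$ from \eqref{uniform-bound}, with the square-root form of \eqref{assume} supplying the quadratic smallness of the forcing. The only cosmetic difference is that the paper closes the argument by checking $\frac{\mathrm d}{\mathrm dt}\mathsf E(\tilde t)<0$ at the first crossing time, whereas you integrate the inequality to obtain the strictly improved bound $\mathsf E(t)\le\mathsf E(0)+\mathsf F^{1/2}(0)$; both hinge on the same mechanism.
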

\begin{proof}
Suppose that $\eqref{result}$ is not true, and assume that 
 \begin{equation*}
\begin{split}
\tilde{t}=\inf \{t\in [0, T], \mathsf{E}(t)> \epsilon_0\}, 
 \end{split}
 \end{equation*}
 then it gives that $\mathsf{E}(\tilde{t})=\epsilon_0$. Then, according to $\eqref{E-D}$, $\eqref{E-D-1}$ and $\eqref{uniform-bound}$, we get 
  \begin{equation*}
\begin{split}
\frac{\mathrm{d}}{\mathrm{d}t}\mathsf{E}(\tilde{t})\leq -C\epsilon_0(1-\epsilon_0-\epsilon_0^2)+C\epsilon_0^2,
  \end{split}
 \end{equation*}
subsequently we use the sufficiently small $\epsilon_0$ to get $\frac{\mathrm{d}}{\mathrm{d}t}\mathsf{E}(\tilde{t})< 0$. It gives a contradiction, so \eqref{result} holds.
 
\end{proof}

Here we use the boot-strap argument to conclude the proof of Theorem $\ref{global-existence-strong-solution}$. Thanks to the local existence result in Theorem $\ref{local-existence}$, there exists a time $T^*>0$, and $(\rho, \bu, \bb)$ satisfies $\eqref{regularity}$. Due to the smallness of initial data $\eqref{assume}$, Lemma $\ref{E}$ implies that  $\eqref{result}$ holds for $T=T^*$, then $\eqref{bound-density}$ gives that 
\begin{equation*}
\begin{split}
\frac{\bar{\rho}}{2}\leq \rho(T^*, \bx)\leq \frac{3}{2}\bar{\rho}, \quad \bx \in \mathcal{C},
  \end{split}
\end{equation*}
and 
\begin{equation*}
\begin{split}
\norm{\bu(T^*)}_{L^2_{(\mathcal{S})}}^2+\norm{\bv(T^*)}_{H^2_{(\mathcal{C})}}+\norm{\varrho(T^*)}_{H^2_{(\mathcal{C})}}+\norm{\bb(T^*)}_{H^2_{(\mathcal{C})}}\leq C,
  \end{split}
\end{equation*}
thus we could apply again the local  existence result in Theorem $\ref{local-existence}$, then we could establish the existence of solution in $[T^*, 2T^*]$. This argument can be repeated in $[0, nT^*]$ for $n\in\mathbb{N}^+$. Eventually, we complete the proof of Theorem $\ref{global-existence-strong-solution}$.

\section{Existence of Weak Solutions to Coupled System}

In this section, we mainly show the existence of the finite energy weak solutions to the coupled system without the hypothesis of the smallness of initial data.

We present the definition of the finite energy weak solution to $\eqref{mhd-x}$-$\eqref{com-x}$.
\begin{definition}\label{weak-solution}
A finite energy weak solution to \eqref{mhd-x}--\eqref{com-x} is $(\rho, \bu, \boldsymbol{\xi_{\bu}},\boldsymbol{\omega}_{\bu}, \bb)$ satisfying
\begin{itemize}
	\item 
	$\rho \geq 0$ in $(0,T)\times\mathcal{S} $, \quad $\rho \in L^\infty(0,T;L^\gamma _{(\mathcal{C})})$,\\
	$\bu \in L^2(0,T;W^{1,2}_{(\mathcal{S})})\cap L^\infty(0,T;L^2_{(\mathcal{S})})$,\quad $\boldsymbol{\xi_{\bu}}$, $\boldsymbol{\omega_{\bu}}$ $\in$ $C([0,T])$,\\
	$\bb\in L^2(0,T;H^1_{(\mathcal{C})})\cap L^\infty(0,T;L^2_{(\mathcal{C})})$, \quad $\mathrm{div}_{\bx}\bb=0$ in $ \mathcal{D}'_{(\mathcal{C})}$;

		\item For any test function $\phi \in C_c^\infty([0,T)\times \overline{\mathcal{C}})$, the density $\rho$ satisfies 
		\begin{equation}\label{weak-density}
		\int_0^T\int_{\mathcal{S}}(\rho\phi_t+\rho\bv\cdot\nabla_{\bx}\phi)\dxdt=-\int_{\mathcal{S}}\rho_0\phi_0\ddx, 
		\end{equation}
furthermore, a renormalized equation of continuity also holds
			\begin{equation*}\label{renormalized-weak-density}
	\int_0^T\int_{\mathcal{S}}(b(\rho)\phi_t+b(\rho)\bv\cdot\nabla_{\bx}\phi-(b'(\rho)\rho-b(\rho))\mathrm{div}_{\bx}\phi)\dxdt=-\int_{\mathcal{S}}b(\rho_0)\phi_0\ddx 
	\end{equation*}
	for any $b \in C^1(\mathbb{R}) $ satisfying $b'(z)=0$ for all $z\in \mathbb{R}^+$ large enough;
			
			\item $\rho\bu \in C_{weak}(0,T;L^{\frac{2\gamma}{\gamma+1}}_{(\mathcal{S})})$ and the momentum equation hold in a weak sense
				\begin{multline}\label{weak-momentum}
			\int_0^T\int_{\mathcal{S}}(-\rho \bu\cdot\boldsymbol{\varphi}_t-\rho\bv\otimes\bu:\nabla_{\bx}\boldsymbol{\varphi}+\rho\boldsymbol{\omega_{\bu}}\times \bu\cdot\boldsymbol{\varphi} -p(\rho)\mathrm{div}_{\bx}\boldsymbol{\varphi}\\
			+S(\nabla_{\bx}\bu):\nabla_{\bx}\boldsymbol{\varphi}-((\nabla_{\bx}\times\bb)\times\bb)\cdot\boldsymbol{\varphi})\dxdt=\int_{\mathcal{S}}\bm{q}\cdot\boldsymbol{\varphi}_0\ddx
		\end{multline}
			for any $\boldsymbol{\varphi} \in C_{c}^\infty([0,T); \mathcal{V}_{(\mathcal{S})})$, and
					\begin{equation*}\label{weak-xi-u}
					m_{\mathcal{B}}\boldsymbol{\xi}_{\bu}(t)=-\int_{\mathcal{C}}\rho\bu\ddx,\quad   \forall t\in [0,T] ;
				\end{equation*}		
				
		\item The induction equation		
					\begin{multline}\label{weak-magnetic}
					\int_0^T\int_{\mathcal{C}}(-\bb\cdot\boldsymbol{\eta}_t+(\boldsymbol{\omega}\times \bb)\cdot\boldsymbol{\eta}-(\boldsymbol{\omega}\times \bx+\boldsymbol{\xi})\cdot\nabla_{\bx}\bb\cdot\boldsymbol{\eta}\\
					-(\bu\times \bb)\cdot (\nabla_{\bx}\times\boldsymbol{\eta})+(\nabla_{\bx}\times \bb):(\nabla_{\bx}\times\boldsymbol{\eta}))\dxdt=\int_{\mathcal{C}}\bb_0\cdot\boldsymbol{\eta}_0\ddx
									\end{multline}		
holds for any $\boldsymbol{\eta}\in C_c^\infty([0,T)\times \overline{\mathcal{C}})$ with $\boldsymbol{\eta}\cdot \bn = 0$ on $\partial \mathcal C$.
%
	\item The energy inequality 
		\begin{multline}\label{energy-inequality}
		\int_{\mathcal{S}}\left(\frac{1}{2}\rho|\bu|^2+\frac{a}{\gamma-1}\rho^\gamma\right)(t)\ddx+\int_{\mathcal{C}}\frac{1}{2}|\bb|^2(t)\ddx+\int_0^t\int_{\mathcal{S}}S(\nabla_{\bx}\bu):\nabla_{\bx}\bu\dxdt\\
		+\int_0^t\int_{\mathcal{C}}|\nabla_{\bx}\times \bb|^2\dxdt
		\leq \int_{\mathcal{S}}\left(\frac{1}{2}\frac{\bm{q}^2}{\rho_0}+\frac{a}{\gamma-1}\rho_0^\gamma\right)\ddx+\int_{\mathcal{C}}\frac{1}{2}|\bb_0|^2\ddx
		\end{multline}		
		holds for almost all $t\in (0,T)$.
	\end{itemize}

\end{definition}

\subsection{Faedo-Galerkin approximation}

We complete the proof of the existence of weak solution to $\eqref{mhd-x}$-$\eqref{com-x}$ by means of a two-level approximation. Precisely, we put an artificial viscosity term to the continuity equation $\eqref{mhd-x}_1$, and add term $\delta\rho^\beta$ to the momentum equation to get higher integrability of the density. 
\begin{equation}\label{mhd-x-re}
\left.
\begin{split}
\rho_t+ \mathrm{div}_{\bx} (\rho\bv)=\epsilon \Delta_{\bx}\rho  &\\
(\rho\bu)_t+\mathrm{div}_{\bx}(\rho\bv\otimes \bu)+\rho\boldsymbol{\omega}\times \bu+\nabla_{\bx}(p(\rho)+\delta\rho^\beta) +\epsilon\nabla_{\bx}\rho\cdot\nabla_{\bx}\bu&\\=\mathrm{div}_{\bx}S(\nabla_{\bx}\bu) +(\nabla_{\bx}\times\bb)\times\bb &\\
\bb_t+(\bo\times \bb)-(\bo\times \bx+\bxi)\cdot\nabla_{\bx}\bb-\nabla_{\bx}\times(\bu\times\bb)=-\nabla_{\bx}\times(\nabla_{\bx}\times\bb), \quad \mathrm{div}_{\bx}\bb=0&
\end{split}
\right\}
\end{equation}
\hspace{9cm}$\forall (t, \bx) \in (0,\infty)\times \mathcal{C},$
\begin{equation}\label{boundary-x-re}
\left.
\begin{split}
\frac{\partial \rho}{\partial \boldsymbol{n}}=0 &\\
\bu={\bo}(t)\times\bx+{\bxi}(t)&\\
\bb\cdot \bn =0,\quad (\nabla_\bx\times \bb)\times \bn = 0&
\end{split}
\right\}\forall(t, \bx) \in (0,\infty)\times\partial\mathcal{C},
\end{equation}

\begin{equation}\label{com-x-re}
\left.
\begin{split}
\boldsymbol{I}_c\cdot \frac{\mathrm{d}}{\mathrm{d}t}\bo+\bo\times(\boldsymbol{I}_c\cdot\bo)=-\int_{\partial\mathcal{C}}\bx\times \left(T(\bu, p(\rho))+\bb\otimes\bb-\frac{1}{2}|\bb|^2\mathbb{I}\right)\cdot \boldsymbol{n}\ddS&\\
m_{\mathcal{B}}\boldsymbol{\xi}(t)=-\int_{\mathcal{C}}\rho\bu\ddx&
\end{split}
\right\}  \forall t\in(0,\infty).
\end{equation}

The reason why we use $\eqref{mhd-x-re}$-$\eqref{com-x-re}$ as an approximation system of $\eqref{mhd-x}$-$\eqref{com-x}$ is that we can utilize $\mathrm{div}_{\bx}\bb=0$ to obtain
\begin{equation*}
\begin{split}
\bu\cdot\nabla_{\bx}\bb+\bb\mathrm{div}_{\bx}\bu-\bb\cdot\nabla_{\bx}\bu&=-\nabla_{\bx}\times(\bu\times\bb),\\
\mathrm{div}_{\bx}\left(\bb\otimes\bb-\frac{1}{2}|\bb|^2\mathbb{I}\right)&=(\nabla_{\bx}\times \bb)\times\bb.
\end{split}
\end{equation*}

Based on the Lemma 3.5 in \cite{GMN-2020-IJNM} and Proposition 4.1 in \cite{HW-2010-ARMA}, we establish the existence of finite energy weak solutions to $\eqref{mhd-x-re}$-$\eqref{com-x-re}$. The result is given as follows.
\begin{lemma}\label{existence of approximate}
	Assume that $\epsilon >0$, $\delta>0$, $\beta>\max\{\gamma, 4\}$ and $\gamma>\frac{3}{2}$, moreover, 
	\begin{equation*}\label{initial-data-approximate}
	\begin{split}
	&\rho_0\in W^{1,\infty}_{(\mathcal{C})}, \hspace{0.2cm} 0<\underline{\rho}\leq \rho \leq \overline{\rho}, \hspace{0.2cm}  \nabla_{\bx}\rho_0\cdot \boldsymbol{n}=0 \hspace{0.2cm} on \hspace{0.1cm} \partial \mathcal{C},\\
	&\boldsymbol{q}=(\rho\bu)_0\in C^2_{(\mathcal{C})},\hspace{0.2cm}  with \int_{S}\bm{q}\ddx=0,\\
	&\bb_0 \in L^{2}_{(\mathcal{C})}, \quad \int_{\mathcal C} \bb_0\cdot\nabla\varphi\ddx = 0\quad \forall \varphi \in W^{1,2}_{(\mathcal C)},\quad \bb_0\cdot\bn = 0.
	\end{split}
	\end{equation*}
	Then there exists  at least one weak solution $(\rho, \bu, \bb)$ to $\eqref{mhd-x-re}$, $\eqref{boundary-x-re}$, $\eqref{com-x-re}$. 

	Furthermore, following items hold.
		\begin{itemize}
			\item	$\rho_t, \Delta_{\bx}\rho \hspace{0.1cm}in \hspace{0.1cm} L^r((0,T)\times\mathcal{C})$, for some $r>1$.  \\
			Equation $\eqref{mhd-x-re}_1$ holds almost everywhere on $(0,T)\times \mathcal{C}$.\\
			$\delta\int_0^T\int_{\mathcal{C}}\rho^{\beta+1}\leq C$, \hspace{0.1cm}$\epsilon \int_0^T\int_{\mathcal{C}}|\nabla_{\bx}\rho|^2\leq C$.
		
			\item $\rho \bu \in C_{weak}(0,T;L^{\frac{2\gamma}{\gamma+1}}_{(\mathcal{S})})$, and  $\eqref{mhd-x-re}_2$, $\eqref{mhd-x-re}_3$, $\mathrm{div}_{\bx}\bb=0$ hold in the sense of distribution.
			
			\item The energy inequality is established
			\begin{multline}\label{energy-inequality-epsilon}
			\int_{\mathcal{S}}\left(\frac{1}{2}\rho|\bu|^2+\frac{a}{\gamma-1}\rho^\gamma+\frac{\delta}{\beta-1}\rho^\beta\right)(t)\ddx+\int_{\mathcal{C}}\frac{1}{2}|\bb|^2(t)\ddx\\
			+\int_0^t\int_{\mathcal{S}}S(\nabla_{\bx}\bu):\nabla_{\bx}\bu\dxdt+\int_0^t\int_{\mathcal{C}}|\nabla_{\bx}\times \bb|^2\dxdt\\
			+\epsilon \int_0^t\int_{\mathcal{C}}(a\gamma\rho^{\gamma-2}+\delta\beta\rho^{\beta-2})|\nabla_{\bx}\rho|^2\dxdt\\
			\leq \int_{\mathcal{S}}\left(\frac{1}{2}\frac{\bq^2}{\rho_0}+\frac{a}{\gamma-1}\rho_0^\gamma+\frac{b}{\beta-1}\rho_0^\beta\right)\ddx+\int_{\mathcal{C}}\frac{1}{2}|\bb_0|^2\ddx.
			\end{multline}			
		\end{itemize}
\end{lemma}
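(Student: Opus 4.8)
The plan is to combine the Faedo--Galerkin construction for compressible Navier--Stokes interacting with a rigid body containing a cavity (\cite[Lemma 3.5]{GMN-2020-IJNM}) with the compressible MHD approximation scheme of \cite[Proposition 4.1]{HW-2010-ARMA}; both ingredients are essentially classical, so I concentrate on how the rigid-body coupling and the induction equation interlock. \textbf{Step 1 (Galerkin setup and auxiliary problems).} Fix $n\in\mathbb N$ and let $X_n=\mathrm{span}\{\boldsymbol\psi_1,\dots,\boldsymbol\psi_n\}\subset\mathcal V_{(\mathcal S)}$ be generated by smooth fields that are rigid on $\mathcal B$; I seek $\bu_n(t)\in X_n$. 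For a given $\bu_n$ — hence a given relative velocity $\bv_n=\bu_n-\bo_{\bu_n}\times\bx-\bxi_{\bu_n}$, smooth in $\bx$ — I first solve the parabolic continuity problem $\rho_t+\mathrm{div}_{\bx}(\rho\bv_n)=\epsilon\Delta_{\bx}\rho$ with $\partial_{\bn}\rho=0$, $\rho|_{t=0}=\rho_0$; classical parabolic theory gives a unique positive classical solution $\rho=\rho[\bu_n]$ obeying the maximum-principle bounds $\underline\rho\,e^{-\int_0^t\|\mathrm{div}_{\bx}\bv_n\|_{L^\infty_{(\mathcal C)}}}\le\rho\le\overline\rho\,e^{\int_0^t\|\mathrm{div}_{\bx}\bv_n\|_{L^\infty_{(\mathcal C)}}}$, with $\rho\in C([0,T];W^{1,\infty}_{(\mathcal C)})$, $\rho_t,\Delta_{\bx}\rho\in L^r$, and Lipschitz dependence on $\bu_n$. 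Then I solve the linear parabolic induction problem $\eqref{mhd-x-re}_3$ with the conditions $\bb\cdot\bn=0$, $(\nabla_{\bx}\times\bb)\times\bn=0$, using the lemmas of Section~2 (or \cite{HW-2010-ARMA}), obtaining $\bb=\bb[\bu_n]\in C([0,T];L^2_{(\mathcal C)})\cap L^2(0,T;H^1_{(\mathcal C)})$ with $\mathrm{div}_{\bx}\bb=0$ preserved and a bound controlled by $\|\bu_n\|$.

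\textbf{Step 2 (fixed point, energy identity, global existence).} Substituting $\rho=\rho[\bu_n]$, $\bb=\bb[\bu_n]$ into the Galerkin projection of the momentum equation and inverting the positive (hence invertible) mass matrix $\big(\int_{\mathcal S}\rho_{\mathcal S}\boldsymbol\psi_i\cdot\boldsymbol\psi_j\big)_{ij}$ turns the scheme into an ODE for the coefficients, locally solvable by Banach's fixed point. Testing the momentum equation by $\bu_n$, adding the renormalized continuity identities (multiply $\eqref{mhd-x-re}_1$ by $\tfrac12|\bu_n|^2$ and by $\tfrac{a\gamma}{\gamma-1}\rho^{\gamma-1}+\tfrac{\delta\beta}{\beta-1}\rho^{\beta-1}$) and the induction equation tested by $\bb$, and exploiting the cancellations $\rho\bo_{\bu_n}\times\bu_n\cdot\bu_n=0$, the matching of $\epsilon\nabla_{\bx}\rho\cdot\nabla_{\bx}\bu_n\cdot\bu_n$ with $\tfrac12|\bu_n|^2\,\epsilon\Delta_{\bx}\rho$, the identity $\int_{\mathcal C}((\nabla_{\bx}\times\bb)\times\bb)\cdot\bu_n=\int_{\mathcal C}(\bu_n\times\bb)\cdot(\nabla_{\bx}\times\bb)$ (Lorentz force versus magnetic-energy production), and — crucially — the fact that using rigid test functions in $\mathcal V_{(\mathcal S)}$ makes the boundary stress on $\partial\mathcal C$ collapse into $\eqref{com-x-re}_1$, one arrives at the balance $\eqref{energy-inequality-epsilon}$ \emph{with equality} at the Galerkin level. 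The resulting uniform-in-$n$ bounds let one continue the solution to an arbitrary $T$.

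\textbf{Step 3 (uniform bounds and passage to the limit).} From $\eqref{energy-inequality-epsilon}$ one reads off that $\bu_n$ is bounded in $L^\infty(0,T;L^2_{(\mathcal S)})\cap L^2(0,T;W^{1,2}_{(\mathcal S)})$, $\rho_n$ in $L^\infty(0,T;L^\gamma_{(\mathcal C)}\cap L^\beta_{(\mathcal C)})$, $\bb_n$ in $L^\infty(0,T;L^2_{(\mathcal C)})\cap L^2(0,T;H^1_{(\mathcal C)})$ (using $\|\nabla_{\bx}\bb\|\sim\|\nabla_{\bx}\times\bb\|$), and $\epsilon\int_0^T\!\int_{\mathcal C}(a\gamma\rho^{\gamma-2}+\delta\beta\rho^{\beta-2})|\nabla_{\bx}\rho|^2$ bounded. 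Multiplying $\eqref{mhd-x-re}_1$ by $\rho_n$ gives $\epsilon\int_0^T\!\int_{\mathcal C}|\nabla_{\bx}\rho_n|^2\le C$; the $\delta\rho^\beta$ pressure together with the standard Feireisl pressure-estimate argument (testing the momentum equation against a Bogovskii-type potential of $\rho_n$) gives $\delta\int_0^T\!\int_{\mathcal C}\rho_n^{\beta+1}\le C$; and maximal parabolic regularity applied to $\epsilon\Delta_{\bx}\rho_n=\rho_{n,t}+\mathrm{div}_{\bx}(\rho_n\bv_n)$ gives $\rho_{n,t},\Delta_{\bx}\rho_n$ bounded in $L^r$ for some $r>1$. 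Hence $\{\rho_n\}$ is relatively compact in $L^p((0,T)\times\mathcal C)$, $p<\beta+1$, by Aubin--Lions, so $\rho_n\to\rho$ strongly and all pressure and density terms pass; Aubin--Lions likewise gives strong convergence of $\rho_n\bu_n$ and of $\bb_n$ in $L^2((0,T)\times\mathcal C)$ (the time derivatives being bounded in suitable dual spaces), which, combined with $\bu_n\rightharpoonup\bu$ in $L^2(0,T;W^{1,2}_{(\mathcal S)})$, controls the nonlinear terms $\rho_n\bv_n\otimes\bu_n$, $\rho_n\bo_{\bu_n}\times\bu_n$, $\bu_n\times\bb_n$ and $(\nabla_{\bx}\times\bb_n)\times\bb_n$; the convergences $\bo_{\bu_n}\to\bo_{\bu}$, $\bxi_{\bu_n}\to\bxi_{\bu}$ in $C([0,T])$ follow from $\eqref{re-system}$ and $\eqref{com-x-re}_1$. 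The limit satisfies the distributional form of $\eqref{mhd-x-re}$--$\eqref{com-x-re}$, the continuity equation holds a.e. by its parabolic regularity, and $\eqref{energy-inequality-epsilon}$ survives by weak lower semicontinuity of the convex energy and of the dissipation terms.

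\textbf{Expected main obstacle.} The delicate part is the energy computation in Step~2: it must be organized so that the boundary stress on $\partial\mathcal C$ cancels against the rigid-body equation \emph{and}, simultaneously, the Lorentz force cancels against the magnetic-energy production of $\nabla_{\bx}\times(\bu\times\bb)$, while the rotation-induced lower-order terms $\bo_{\bu}\times\bb$ and $(\bo_{\bu}\times\bx+\bxi_{\bu})\cdot\nabla_{\bx}\bb$ are absorbed — only then does $\eqref{energy-inequality-epsilon}$ hold uniformly in $n$ and the scheme close. The compactness of $\rho_n$ needed in Step~3 relies entirely on the artificial viscosity $\epsilon\Delta_{\bx}\rho$, which is exactly why this result is available only at the approximate level.
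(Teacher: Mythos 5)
Your proposal follows essentially the same route as the paper: a Faedo--Galerkin scheme on a finite-dimensional velocity space containing the rigid motions, with the regularized continuity and induction equations solved as auxiliary problems for a given velocity, a fixed point, the Galerkin energy identity with the stated cancellations, and compactness (Aubin--Lions for $\rho_n$, weak convergence plus a dual-space bound on the time derivative for $\bb_n$) to pass $n\to\infty$. The only step worth making explicit is the limit of the coupling term $\epsilon\nabla_{\bx}\rho_n\cdot\nabla_{\bx}\bu_n$, a product of two merely weakly convergent sequences, which the paper handles by comparing the $L^2$ energy identities for $\rho_n$ and for the limit $\rho$ to upgrade $\nabla_{\bx}\rho_n\to\nabla_{\bx}\rho$ to strong convergence in $L^2((0,T)\times\mathcal{C})$.
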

\begin{proof}
Here we provide proof for the sake of completeness. As it does not contain any new ideas, we omit some details -- for more, we refer to \cite{FNP-2001-JMFM} and \cite{HW-2010-ARMA}.
 
The solution is constructed through the Faedo-Galerkin approximations. We investigate an $n-$dimensional space $X_n\subset \left\{\bv\in W^{1,2}_{(\mathcal C)},\, \bv|_{\partial \mathcal C} = 0\right\}$ which is spanned by $n$ eigenvalues of the Dirichlet-Laplace operator. Next, we consider a $6+n$ dimensional space 
$$Y_n = \mathbb{R}^6\oplus X_n$$
equipped with a scalar product
$$
\langle \bm{\varphi},\bm{\psi}\rangle = \int_{\mathcal B} \left(\boldsymbol{\omega}_{\bm\varphi}\times \bx + \boldsymbol{\xi}_{\bm\varphi}\right)\cdot\left(\boldsymbol{\omega}_{\bm\psi}\times \bx + \boldsymbol{\xi}_{\bm\psi}\right)\ {\rm d}\bx + \int_{\mathcal C} \nabla_{\bx} {\bm\varphi}_{\mathcal C}: \nabla_{\bx} \bm{\psi}_{\mathcal C}\ {\rm d}\bx
$$
where we use the fact that every element $\bm{\varphi}$ of $Y_n$ can be decomposed into $\boldsymbol{\omega}_{\bm{\varphi}}\in\mathbb R^3$, $\boldsymbol{\xi}_{\bm{\varphi}}\in \mathbb R^3$ and $\bm{\varphi}_{\mathcal C}\in X_n$.

Let $\boldsymbol u\in C([0,T];Y_n)$ (as usual, we use $\boldsymbol v$ for $\boldsymbol u|_{\mathcal C}$, $\boldsymbol \omega$ for $\boldsymbol{\omega}_{\boldsymbol u}$ and $\boldsymbol \xi$ for $\boldsymbol{ \xi}_{\boldsymbol u}$). There is an operator $\mathcal{S}:C([0,T];X_n)\to C([0,T]; C^{2+\nu}_{\mathcal C})$ such that $\rho = \mathcal{S}(\boldsymbol v)$ solves  \eqref{mhd-x-re}$_1$ equipped with the boundary condition \eqref{boundary-x-re}$_1$ and initial condition $\rho(0,\cdot) = \rho_0(\cdot)$ (see for example \cite[Theorem 10.22 \& 10.23]{FN-2009-Birkhauser}). Similarly, given $\boldsymbol{u}\in C([0,T];Y_n)$ there is a solution $\boldsymbol b = \mathcal{H}(\boldsymbol u)$ to \eqref{mhd-x-re}$_3$ equipped with the boundary condition \eqref{boundary-x-re}$_3$ and with initial condition $\boldsymbol b(0,\cdot) = \boldsymbol b_0(\cdot)$ (here we refer to \cite[Lemma 3.1 \& 3.2]{HW-2010-ARMA}).

Next, we define $\mathcal M_\varrho:Y_n\to Y_n$ as 
$$
\mathcal M_\varrho(\boldsymbol u) = \boldsymbol w \Leftrightarrow \int_{\mathcal S}\varrho\boldsymbol u\cdot \boldsymbol \psi \ddx = \int_{\mathcal S}\boldsymbol w\cdot \boldsymbol\psi\ddx\quad \forall \boldsymbol \psi\in Y_n.
$$

For more about this mapping, we refer to \cite{FNP-2001-JMFM}. Finally, given a function $\boldsymbol u_n\in C([0,T]; Y_n)$ we set a new function $\boldsymbol w_n\in C([0,T]; Y_n)$ defined as follows
\begin{equation*}
\boldsymbol w_n(t)  = \mathcal M^{-1}_{\rho_n(t)} \left(\boldsymbol q + \int_0^t \mathcal N(\rho_n(s), \boldsymbol b_n(s), \boldsymbol u_n(s)\ {\rm d}s\right)
\end{equation*}
where $\boldsymbol q\in Y_n$ such that $\int_{\mathcal S} \boldsymbol q\cdot \boldsymbol \psi\ddx = \int_{\mathcal S} (\rho\boldsymbol u)_0\cdot \boldsymbol \psi \ddx$ for all $\boldsymbol \psi \in Y_n$, $\rho_n = \mathcal{S}(\boldsymbol v_n)$ and $\boldsymbol b_n = \mathcal{H}(\boldsymbol u_n)$ and $\mathcal N$ is a such function that (here and in what follows we use $\boldsymbol \varphi$ also for a function $\boldsymbol \varphi_{\mathcal C} + \boldsymbol \omega_{\boldsymbol \varphi}\times \bx + \boldsymbol \xi_{\boldsymbol \varphi}$ for every $\boldsymbol \varphi \in Y_n$)
\begin{multline*}
\langle\mathcal N(\rho,\boldsymbol b, \boldsymbol u),\boldsymbol \psi\rangle =
\int_{\mathcal C}
\left(\mathrm{div}_{\bx}S(\nabla_{\bx}\bu) +(\nabla_{\bx}\times\bb)\times\bb\right.\\
\left.-
\mathrm{div}_{\bx}(\rho\bv\otimes \bu)-\rho\boldsymbol{\omega}\times \bu-\nabla_{\bx}(p(\rho)-\delta\rho^\beta) -\epsilon\nabla_{\bx}\rho\cdot\nabla_{\bx}\bu\right)\cdot \boldsymbol \psi \ddx.
\end{multline*}

The (above constructed)  mapping $\mathcal T:C([0,T^*];Y_n)\to C([0,T^*];Y_n)$ is a contraction assuming $T^*$ is sufficiently small and, consequently, there is a fixed-point, i.e. a function $\boldsymbol u_n\in C([0,T^*];Y_n )$ such that
\begin{multline}
\label{very.first}
\int_{\mathcal S} \rho_n \boldsymbol u_n\ \cdot \boldsymbol \psi{\rm d}\bx = \int_{\mathcal S} (\rho\boldsymbol u)_0\cdot \boldsymbol \psi\ {\rm d}\bx + 
\int_0^t\int_{\mathcal C}\left(
\mathrm{div}_{\bx}S(\nabla_{\bx}\bu) +(\nabla_{\bx}\times\bb)\times\bb\right.\\
\left.-
\mathrm{div}_{\bx}(\rho\bv\otimes \bu)-\rho\boldsymbol{\omega}\times \bu-\nabla_{\bx}(p(\rho)-\delta\rho^\beta) -\epsilon\nabla_{\bx}\rho\cdot\nabla_{\bx}\bu\right)\cdot \boldsymbol \psi \ {\rm d}\bx\mathrm{d}s,
\end{multline}
for every $\bm{\psi}\in Y_n$. We take the derivative with respect to $t$ of \eqref{very.first} and use $\boldsymbol \psi = \boldsymbol u$ to get (with the help of the continuity)
\begin{multline}\label{ene.equality}
\frac 12 \partial_t 
\int_{\mathcal S}\rho_n|\boldsymbol u_n|^2\ {\rm d}\bx+ \partial_t \int_{\mathcal S} \left(\frac a{\gamma-1}\rho^\gamma + \frac \delta{\beta-1}\rho^\beta\right)\ {\rm d}\bx + \partial_t \int_{\mathcal C} \frac 12 |\boldsymbol b|^2\ {\rm d}\bx+ \int_{\mathcal C} S(\nabla_{\bx} \boldsymbol u):\nabla_{\bx}\boldsymbol u\ {\rm d}\bx\\
+ \int_{\mathcal C} |\nabla_{\bx}\times \boldsymbol b|^2\ {\rm d}\bx+ \epsilon \int_{\mathcal C} (a\gamma\rho^{\gamma-2} + \delta \beta \rho^{\beta-2})|\nabla_{\bx} \rho|^2 \ {\rm d}\bx\leq 0.
\end{multline}
This yields the boundedness of solutions to \eqref{very.first} and, in turn, we get the existence of the solution on the whole time interval $[0,T]$ (recall that all norms on a finitely-dimensional space are equivalent).

It remains to pass with $n\to \infty$. We deduce from the energy equality \eqref{ene.equality} the following set of estimates
\begin{equation*}
\begin{split}
\sup_{t\in [0,T]} \|\rho_n\|_{L^\beta_{(\mathcal C)}}&\leq C,\\
\epsilon \|\nabla_{\bx} \rho_n\|_{L^2(0,T;L^2_{(\mathcal C)})}& \leq C,\\
\sup_{t\in [0,T]} \|\rho_n |\bm{u}|^2\|_{L^1_{(\mathcal S)}} & \leq C,\\
\|\boldsymbol v_n\|_{L^2(0,T;W^{1,2}_{(\mathcal C)})} & \leq C,\\
\|\boldsymbol b_n\|_{L^2(0,T;W^{1,2}_{(\mathcal C)})} & \leq C,\\
\|\boldsymbol b_n\|_{L^\infty(0,T;L^2_{(\mathcal C)})} & \leq C,
\end{split}
\end{equation*}
where $C$ is a constant independent of $n$. Thus, up to a subsequence, 
\begin{equation*}
\begin{split}
\rho_n&\to \rho \ \mbox{weakly}^*\ \mbox{in }L^\infty(0,T;L^\beta_{(\mathcal C)})\\
\boldsymbol v_n&\to \boldsymbol v \ \mbox{weakly in }L^2(0,T;W^{1,2}_{(\mathcal C)})\\
\boldsymbol b_n&\to \boldsymbol b \ \mbox{weakly in }L^2(0,T;W^{1,2}_{(\mathcal C)}).
\end{split}
\end{equation*}

The Aubin-Lions lemma yields $\rho_n\to \rho$ strongly in, say, $L^4(0,T;L^4_{(\mathcal C)})$. This gives
$$
\rho_n\boldsymbol u_n \to \rho \boldsymbol u\ \mbox{weakly}^*\ \mbox{in }L^\infty(0,T,L^{2\beta/(\beta+1)}_{(\mathcal C)}).
$$
Note that this also yields
$$
\boldsymbol \omega_n\to \boldsymbol \omega,\ \boldsymbol \xi_n\to \boldsymbol \xi\ \mbox{in }C([0,T]).
$$
Furthermore, \cite[Lemma 2.4]{FNP-2001-JMFM} yields the existence of $r>1$ and $q>1$ such that
$$
\|\partial_t \rho_n\|_{L^r(0,T;L^r_{(\mathcal C)})} + \|\Delta_{\bx}\rho_n\|_{L^r(0,T;L^r_{(\mathcal C)})} + \|\nabla_{\bx} \rho_n\|_{L^q(0,T;L^2_{(\mathcal C)})}\leq C.
$$
Consequently, 
$$
\partial_t \rho + {\rm div}_{\bx} (\rho \boldsymbol v) = \epsilon \Delta_{\bx}\rho
$$
almost everywhere. 

Next, we test the continuity equations by $\rho_n$, respectively $\rho$ to get
$$
\|\rho_n(t)\|_{L^2_{(\mathcal C)}}^2 + 2\epsilon \int_0^t\|\nabla_{\bx} \rho_n\|_{L^2_{(\mathcal C)}}^2\ {\rm d}s = -\int_0^t\int_\Omega {\rm div}_{\bx} \boldsymbol v_n |\rho_n|^2\ {\rm d}\bx{\rm d}s + \|\rho_0\|_{L^2_{(\mathcal C)}}^2,
$$
and
$$
\|\rho(t)\|_{L^2_{(\mathcal C)}}^2 + 2\epsilon \int_0^t\|\nabla_{\bx} \rho\|_{L^2_{(\mathcal C)}}^2\ {\rm d}s = -\int_0^t\int_\Omega {\rm div}_{\bx} \boldsymbol v |\rho|^2\ {\rm d}\bx{\rm d}s + \|\rho_0\|_{L^2_{(\mathcal C)}}^2.
$$

We deduce that $\nabla_{\bx}\rho_n \to \nabla_{\bx}\rho$ strongly in $L^2(0,T;L^2_{(\mathcal C)})$. Thus, $(\nabla_{\bx}\rho_n\cdot \nabla_{\bx})\boldsymbol u_n\to (\nabla_{\bx}\rho \cdot \nabla_{\bx}) \boldsymbol u $. Next, \eqref{mhd-x-re}$_3$ implies that $\partial_t \boldsymbol b_n\in L^2(0,T;(W^{1,3/2}_{(\mathcal C)})^*)$ with a bound independent of $n$. Therefore (by use of \cite[Lemma 6.2]{NS-2004-Oxford}) $\bb_n\to \bb$ in $C(0,T;L^6_{weak})$ and thus $$\int_0^T\int_{\mathcal C}(\nabla_{\bx}\times \boldsymbol b_n)\times \boldsymbol b_n \cdot \bm{\varphi}\ {\rm d}\bx{\rm d}t \to 
\int_0^T\int_{\mathcal C}(\nabla_{\bx}\times \boldsymbol b)\times \boldsymbol b \cdot \bm{\varphi}\ {\rm d}\bx{\rm d}t.$$

Consequently, the limit functions satisfy \eqref{weak-momentum} and also \eqref{weak-magnetic}. The proof of the energy inequality follows from the weak-lower semi-continuity of the convex functions. 
\end{proof}

\subsection{Artificial viscosity coefficient $\epsilon \rightarrow 0$}

Let $\rho_\epsilon$, $\bu_\epsilon$ and $\bb_\epsilon$ be the solution constructed in the previous section. According to Lemma $\ref{existence of approximate}$, we obtain the following estimates 
\begin{multline}\label{uniform in epsilon}
\sup_{t\in(0,T)}\norm{\sqrt{\rho_{\epsilon}}\bu_{\epsilon}}_{L^2_{(\mathcal{S})}}+\norm{\bu_{\epsilon}}_{L^2(0,T;W^{1,2}_{(\mathcal{S})})}+\norm{\bb_{\epsilon}}_{L^2(0,T:H^1_{(\mathcal{C})})}\\
+\norm{\rho_{\epsilon}}_{L^{\beta}((0,T)\times\mathcal{C})}+\norm{\boldsymbol{\xi}_{\bu_{\epsilon}}}_{L^\infty_{(0,T)}}+\norm{\boldsymbol{\omega}_{\bu_{\epsilon}}}_{L^\infty_{(0,T)}}\leq C,
\end{multline}
where constant $C$ is independent of $\epsilon$. 

We use classical Bogovskii's operator $\mathcal{B}$ defined in \cite{B-1980-TSSLS} to improve the estimates of the density component. Recall $\mathcal B$ is the right inverse to $\rm{div}$ such that $\mathcal{B}:\left\{f \in L^p(\tilde K)\right\}\to W_0^{1,p}(\tilde K)$ is a bounded linear operator for any $1<p<+\infty.$
Moreover, if $f \in L^p(\tilde K)$ can be written in the form $f=\mathrm{div}_x \bf{g}$ for a certain ${\bf{g}} \in L^r(\tilde K), {\bf{g}}\cdot \bm{n}|_{\partial\tilde K}=0$, then it holds that $\|\mathcal{B}[f]\|_{L^r(\tilde K)}\leq C(r)\|{\bf{g}}\|_{L^r(\tilde K)}.$

Choose $\psi(t)\in \mathcal{D}(0,T)$ and set $\overline{m}=\frac{1}{\mathrm{vol}(\mathcal{C})}\int_{\mathcal{C}}\rho_{\epsilon}$. The test function $\psi(t)\mathcal{B}(\rho_{\epsilon}-\overline{m})$ in $\eqref{mhd-x-re}_2$ yields
\begin{equation}\label{high-estimate-pressure}
\begin{split}
&\int_0^T\int_{\mathcal{C}}\psi(a\rho_{\epsilon}^{\gamma+1}+\delta\rho_{\epsilon}^{\beta+1})\dxdt\\
&= \overline{m}\int_0^T\int_{\mathcal{C}}\psi(a\rho_{\epsilon}^\gamma+\delta\rho_{\epsilon}^\beta)\dxdt+\int_0^T\int_{\mathcal{C}}\psi_t\rho_{\epsilon}\bu_{\epsilon}\cdot \mathcal{B}(\rho_{\epsilon}-\overline{m})\dxdt\\
&\hspace{0.5cm}+\int_0^T\int_{\mathcal{C}}\psi\rho_{\epsilon}\bu_{\epsilon}\cdot \mathcal{B}(\mathrm{div}_{\bx}(\rho_{\epsilon}\bu_{\epsilon}))\dxdt-\epsilon \int_0^T\int_{\mathcal{C}}\psi\rho_{\epsilon}\bu_{\epsilon}\cdot\mathcal{B}(\Delta_{\bx}\rho_{\epsilon})\dxdt\\
&\hspace{0.5cm}+\int_0^T\int_{\mathcal{C}}\psi\rho_{\epsilon}\bv_{\epsilon}\otimes \bu_{\epsilon}:\nabla_{\bx}\mathcal{B}(\rho_{\epsilon}-\overline{m})\dxdt+\epsilon \int_0^T\int_{\mathcal{C}}\psi\nabla_{\bx}\rho_{\epsilon}\cdot\nabla_{\bx}\bu_{\epsilon}\cdot \mathcal{B}(\rho_{\epsilon}-\overline{m})\dxdt\\
&\hspace{0.5cm}+\int_0^T\int_{\mathcal{C}}\psi \rho_{\epsilon}\boldsymbol{\omega}_{\bu_{\epsilon}}\times \bu_{\epsilon}\cdot \mathcal{B}(\rho_{\epsilon}-\overline{m})\dxdt+\int_0^T\int_{\mathcal{C}}\psi(\lambda-\frac{2}{3}\mu)\mathrm{div}_{\bx}\bu_{\epsilon}(\rho_{\epsilon}-\overline{m})\dxdt\\
&\hspace{0.5cm}+\int_0^T\int_{\mathcal{C}}2\mu \mathcal{D}(\bu_{\epsilon}):\mathcal{D}(\mathcal{B}(\rho_{\epsilon}-\overline{m}))\dxdt+\int_0^T\int_{\mathcal{C}}\psi((\nabla_{\bx}\times\bb_{\epsilon})\times\bb_{\epsilon})\cdot\mathcal{B}(\rho_{\epsilon}-\overline{m})\dxdt\\
&=:\sum_{i=1}^{10}\mathcal{J}_{1,i}.
\end{split}
\end{equation}
We estimate $\mathcal{J}_{1,i}$ with $i=1,...,10$ term by term.

The properties of Bogovskii's operator, $\eqref{uniform in epsilon}$ and H\"older's inequality lead to 
\begin{equation*}
\begin{split}
&|\mathcal{J}_{1,2}|\leq \int_0^T|\psi_t|\norm{\sqrt{\rho_{\epsilon}}}_{L^2_{(\mathcal{C})}}\norm{\sqrt{\rho_{\epsilon}}\bu_{\epsilon}}_{L^2_{(\mathcal{C})}}\norm{\mathcal{B}(\rho_{\epsilon}-\overline{m})}_{L^\infty_{(\mathcal{C})}}\dt\leq C\int_0^T|\psi_t|\dt\leq C,\\
&|\mathcal{J}_{1,3}|\leq \int_0^T\norm{\rho_{\epsilon}}^2_{L^3_{(\mathcal{C})}}\norm{\bu_{\epsilon}}^2_{L^6_{(\mathcal{C})}}\dt\leq C,\\
&|\mathcal{J}_{1,4}|\leq \epsilon \int_0^T\norm{\rho_{\epsilon}}_{L^3_{(\mathcal{C})}}\norm{\bu_{\epsilon}}_{L^6_{(\mathcal{C})}}\norm{\nabla_{\bx}\rho}_{L^2_{(\mathcal{C})}}\dt\leq C,\\
&|\mathcal{J}_{1,5}|
 \leq \int_0^T\norm{\rho_{\epsilon}}^2_{L^3_{(\mathcal{C})}}\norm{\bu_{\epsilon}}_{L^6_{(\mathcal{C})}}\norm{\bv_{\epsilon}}_{L^6_{(\mathcal{C})}}\dt \leq C,\\
&|\mathcal{J}_{1,6}|\leq C\sqrt{\epsilon}\sqrt{\epsilon}\norm{\nabla_{\bx}\rho_{\epsilon}}_{L^2((0,T)\times \mathcal{C})}\norm{\bu_{\epsilon}}_{L^2((0,T)\times \mathcal{C})}\norm{\mathcal{B}(\rho_{\epsilon}-\overline{m})}_{L^\infty((0,T)\times \mathcal{C})}\leq C\sqrt{\epsilon},\\
&|\mathcal{J}_{1,7}| \leq C\norm{\boldsymbol{\omega}_{\bu_\epsilon}}_{L^\infty_{(0,T)}}\norm{\rho_{\epsilon}}_{L^2((0,T)\times\mathcal{C})}\norm{\bu_{\epsilon}}_{L^2((0,T)\times\mathcal{C})}\norm{\mathcal{B}(\rho_{\epsilon}-\overline{m})}_{L^\infty((0,T)\times\mathcal{C})}\leq C,\\
&|\mathcal{J}_{1,8}+\mathcal{J}_{1,9}| \leq C\norm{\nabla_{\bx}\bu_{\epsilon}}_{L^2((0,T)\times\mathcal{C})}(\norm{\rho_{\epsilon}}_{L^2((0,T)\times\mathcal{C})}+\norm{\mathcal{B}(\rho_{\epsilon}-\overline{m})}_{L^2(0,T;H^1_{(\mathcal{C})})} \leq C, \\
&|\mathcal{J}_{1,10}|\leq C\norm{\nabla_{\bx}\times\bb_{\epsilon}}_{L^2((0,T)\times\mathcal{C})}\norm{\bb_{\epsilon}}_{L^\infty(0,T;L^2_{(\mathcal{C})})}\norm{\mathcal{B}(\rho_{\epsilon}-\overline{m})}_{L^\infty((0,T)\times\mathcal{C})}\leq C.
\end{split}
\end{equation*}
We plug these estimates into $\eqref{high-estimate-pressure}$ to get 
\begin{equation}\label{high-rho}
\begin{split}
\norm{\rho_{\epsilon}}_{L^{\gamma+1}((0,T)\times\mathcal{C})}+\norm{\rho_{\epsilon}}_{L^{\beta+1}((0,T)\times\mathcal{C})}\leq C,
\end{split}
\end{equation}
where $C$ is independent of $\epsilon$.

By $\epsilon\norm{\nabla_{\bx}\rho_{\epsilon}}^2_{L^2((0,T)\times\mathcal{C})}\leq C$ in Lemma $\ref{existence of approximate}$, $\eqref{energy-inequality-epsilon}$, and $\eqref{high-rho}$, we deduce that 
\begin{equation*}
\begin{split}
&\epsilon \nabla_{\bx}{\rho_{\epsilon}}\cdot \nabla_{\bx}\bu_{\epsilon}\rightarrow 0 \ \mbox{in }L^1{((0,T)\times \mathcal{C})},\\
&\epsilon \Delta_{\bx}\rho_{\epsilon}\rightarrow 0 \hspace{0.1cm} in  \hspace{0.1cm} L^2(0,T;H^{-1}_{(\mathcal{C})}),\\
&\rho_{\epsilon}\rightarrow \rho \hspace{0.1cm}in  \hspace{0.1cm} C_{weak}(0,T;L^\beta_{(\mathcal{C})}),\\
&\bu_{\epsilon}\rightarrow \bu  \hspace{0.1cm} weakly  \ \mbox{in }L^2(0,T;H^1_{(\mathcal{C})}),\\
&\bb_{\epsilon}\rightarrow \bb  \hspace{0.1cm} weakly  \ \mbox{in }L^2(0,T;H^1_{(\mathcal{C})})\hspace{0.1cm}with\hspace{0.1cm} \mathrm{div}_{\bx}\bb=0,\\
&\bb_{\epsilon}\rightarrow \bb  \hspace{0.1cm} strongly  \ \mbox{in }L^2(0,T;L^2_{(\mathcal{C})}),\\
&p(\rho_{\epsilon})+\delta \rho_{\epsilon}^\beta\rightarrow \overline{p(\rho)+\delta \rho^\beta}\hspace{0.1cm} weakly \ \mbox{in } L^{\frac{\beta+1}{\beta}}((0,T)\times \mathcal{C}).
\end{split}
\end{equation*}

 By similar procedures as in  \cite[Section 7.9]{NS-2004-Oxford} and  \cite[Section 3.2]{GMN-2020-IJNM}, we obtain the equi-continuity of $\rho_{\epsilon}\bu_{\epsilon} \in W^{-1,\frac{\beta}{\beta+1}}_{(\mathcal{S})}$, and
\begin{equation*}
\begin{split}
\rho_{\epsilon}\bu_{\epsilon}\rightarrow \rho\bu \ \mbox{in } C_{weak}(0,T;L^{\frac{2\beta}{\beta+1}}_{\mathcal{(S)}}).
\end{split}
\end{equation*}
Consequently, we derive that 
\begin{equation*}
\begin{split}
&\boldsymbol{\omega}_{\bu_{\epsilon}}\rightarrow \boldsymbol{\omega}_{\bu}\hspace{0.1cm}strongly \ \mbox{in }L^\infty(0,T),\\
&\boldsymbol{\xi}_{\bu_{\epsilon}}\rightarrow \boldsymbol{\xi}_{\bu}\hspace{0.1cm}strongly \ \mbox{in }L^\infty(0,T).
\end{split}
\end{equation*}
Moreover, 
\begin{equation*}
\begin{split}
\rho_{\epsilon}\bu_{\epsilon}\rightarrow \rho\bu \hspace{0.1cm}weakly^*\hspace{0.1cm}in \hspace{0.1cm}L^\infty(0,T;L^{\frac{2\beta}{\beta+1}}_{(\mathcal{S})}).
\end{split}
\end{equation*}
It holds that 
\begin{equation*}
\begin{split}
\rho_{\epsilon}\bu_{\epsilon}\rightarrow \rho\bu \hspace{0.1cm}strongly  \hspace{0.1cm}in  \hspace{0.1cm}L^p(0,T;W^{-1,2}_{(\mathcal{S})}), \hspace{0.1cm} for \hspace{0.1cm} p\geq 1,
\end{split}
\end{equation*}
thus, 
\begin{equation*}
\begin{split}
\rho_{\epsilon}\bv_{\epsilon}\otimes\bu_{\epsilon}\rightarrow \rho\bv\otimes\bu \hspace{0.1cm}in  \hspace{0.1cm} (C(0,T;\mathcal{V}_{(\mathcal{S})}))^*.
\end{split}
\end{equation*}
Also 
\begin{equation*}
\begin{split}
(\nabla_{\bx}\times\bb_{\epsilon})\times\bb_{\epsilon}\rightarrow (\nabla_{\bx}\times\bb)\times\bb\ \mbox{in } \mathcal{D}'(0,T;\mathcal{C}), 
\end{split}
\end{equation*}
and
\begin{equation*}
\begin{split}
\nabla_{\bx}\times(\bu_{\epsilon}\times\bb_{\epsilon})\rightarrow\nabla_{\bx}\times(\bu\times\bb)\ \mbox{in } \mathcal{D}'(0,T;\mathcal{C}).
\end{split}
\end{equation*}

Therefore, we obtain that $(\rho, \bu, \bb)$ satisfies the following system
\begin{equation*}\label{mhd-x-re-viscosity}
\left.
\begin{split}
\rho_t+ \mathrm{div}_{\bx} (\rho\bv)=0, \hspace{0.1cm} in\hspace{0.1cm} \mathcal{C}&\\
(\rho\bu)_t+\mathrm{div}_{\bx}(\rho\bv\otimes \bu)+\rho\boldsymbol{\omega}\times \bu+\nabla_{\bx}\overline{(p(\rho)+\delta\rho^\beta)} &\\=\mathrm{div}_{\bx}S(\nabla_{\bx}\bu) +(\nabla_{\bx}\times\bb)\times\bb,\hspace{0.1cm}in\hspace{0.1cm} \mathcal{C} &\\
\bb_t+(\bo\times \bb)-(\bo\times \bx+\bxi)\cdot\nabla_{\bx}\bb-\nabla_{\bx}\times(\bu\times\bb)=-\nabla_{\bx}\times(\nabla_{\bx}\times\bb), \quad \mathrm{div}_{\bx}\bb=0, \hspace{0.1cm} in\hspace{0.1cm} \mathcal{C}&\\
\bu=\boldsymbol{\omega_{\bu}}\times \bx +\boldsymbol{\xi_{\bu}}, \hspace{0.1cm}on \hspace{0.1cm}\partial{\mathcal{C}}&\\
\bb\cdot \bn = 0,\quad (\nabla_\bx\times \bb)\times \bn = 0 \hspace{0.1cm}on \hspace{0.1cm}\partial{\mathcal{C}}&
\end{split}
\right\}
\end{equation*}
\begin{equation*}\label{com-x-re-1}
\left.
\begin{split}
\boldsymbol{I}_c\cdot \frac{\mathrm{d}}{\mathrm{d}t}\bo+\bo\times(\boldsymbol{I}_c\cdot\bo)=-\int_{\partial\mathcal{C}}\bx\times \left(T(\bu, p(\rho)) + \bb\otimes\bb - \frac 12 |\bb|^2\mathbb I\right)\cdot \boldsymbol{n}\ddS&\\
m_{\mathcal{B}}\boldsymbol{\xi}(t)=-\int_{\mathcal{C}}\rho\bu\ddx&
\end{split}
\right\}  \forall t\in(0,\infty).
\end{equation*}

The above convergences allows us to deduce the convergence of the effective viscous flux. In particular \cite[Lemma 3.2]{FNP-2001-JMFM}
\begin{lemma}
It holds that
$$
\int_0^T\int_{\mathcal C} (p(\rho_\epsilon) + \delta \rho_\epsilon^\beta - (\lambda + 2\mu)\mathrm{div}_x \bv_\epsilon)\rho_\epsilon \varphi\dxdt \to \int_0^T\int_{\mathcal C} \left(\overline{p(\rho) + \delta\rho^\beta} - (\lambda + 2\mu)\mathrm{div}_x \bv\right)\rho\varphi\dxdt
$$
for all $\varphi \in C^\infty_c((0,T)\times \mathcal C)$. 
\end{lemma}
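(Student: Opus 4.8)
The plan is to run the classical \emph{effective viscous flux} argument of Lions and Feireisl--Novotn\'y--Petzeltov\'a, adapting it to the three features that are new here: the Coriolis term $\rho\boldsymbol{\omega}\times\bu$, the Lorentz force $(\nabla_{\bx}\times\bb)\times\bb$, and the two $\epsilon$-regularizing terms. First I would introduce the inverse-divergence operator $\mathcal{A}=\nabla_{\bx}\Delta_{\bx}^{-1}$ and the double Riesz transforms $\mathcal{R}_{ij}=\partial_i\Delta_{\bx}^{-1}\partial_j$ on $\mathbb{R}^3$, always applied to the zero-extensions $\mathbf{1}_{\mathcal{C}}\rho_\epsilon$, $\mathbf{1}_{\mathcal{C}}\rho$ of the densities. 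Since $\beta>3$ and $\mathcal{A}\colon L^\beta(\mathbb{R}^3)\to W^{1,\beta}(\mathbb{R}^3)$ is bounded, the convergence $\rho_\epsilon\to\rho$ in $C_{weak}(0,T;L^\beta_{(\mathcal{C})})$ together with the compact embedding $W^{1,\beta}\hookrightarrow\hookrightarrow C$ gives $\mathcal{A}[\mathbf{1}_{\mathcal{C}}\rho_\epsilon]\to\mathcal{A}[\mathbf{1}_{\mathcal{C}}\rho]$ strongly in $C([0,T]\times\overline{\mathcal{C}})$; moreover the identity $\partial_t\mathcal{A}[\mathbf{1}_{\mathcal{C}}\rho_\epsilon]=\mathcal{A}[\mathbf{1}_{\mathcal{C}}\partial_t\rho_\epsilon]$ together with the bounds of Lemma~\ref{existence of approximate} keeps $\partial_t\mathcal{A}[\mathbf{1}_{\mathcal{C}}\rho_\epsilon]$ bounded in a suitable Bochner space (here one uses that $\rho_\epsilon\bv_\epsilon$ has zero normal trace on $\partial\mathcal{C}$, because $\bv_\epsilon$ vanishes there, so that $\mathbf{1}_{\mathcal{C}}\mathrm{div}_{\bx}(\rho_\epsilon\bv_\epsilon)=\mathrm{div}_{\bx}(\mathbf{1}_{\mathcal{C}}\rho_\epsilon\bv_\epsilon)$ in $\mathcal{D}'(\mathbb{R}^3)$).

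Next I would insert the test function $\varphi\,\mathcal{A}[\mathbf{1}_{\mathcal{C}}\rho_\epsilon]$ into the weak formulation of $\eqref{mhd-x-re}_2$ and, in parallel, $\varphi\,\mathcal{A}[\mathbf{1}_{\mathcal{C}}\rho]$ into the limit momentum equation satisfied by $(\rho,\bu,\bb)$; since $\varphi$ is supported in $(0,T)\times\mathcal{C}$, these test functions vanish on the rigid body, so the coupling with $\mathcal{B}$ plays no role and admissibility in $\mathcal{V}_{(\mathcal{S})}$ follows by the usual density argument. Expanding the pressure term produces exactly $\int_0^T\!\int_{\mathcal{C}}\varphi\,(p(\rho_\epsilon)+\delta\rho_\epsilon^\beta)\rho_\epsilon\dxdt$, while the viscous term produces $(\lambda+2\mu)\int_0^T\!\int_{\mathcal{C}}\varphi\,\mathrm{div}_{\bx}\bv_\epsilon\,\rho_\epsilon\dxdt$ up to lower-order contributions carrying $\nabla_{\bx}\varphi$ or off-diagonal Riesz pieces; subtracting the two identities isolates precisely the difference of the two sides of the asserted limit.

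I would then let $\epsilon\to0$ in every remaining term. The pieces containing $\nabla_{\bx}\varphi$, the Coriolis term $\int_0^T\!\int_{\mathcal{C}}\varphi\,\rho_\epsilon\boldsymbol{\omega}_\epsilon\times\bu_\epsilon\cdot\mathcal{A}[\mathbf{1}_{\mathcal{C}}\rho_\epsilon]\dxdt$, and the Lorentz term $\int_0^T\!\int_{\mathcal{C}}\varphi\,((\nabla_{\bx}\times\bb_\epsilon)\times\bb_\epsilon)\cdot\mathcal{A}[\mathbf{1}_{\mathcal{C}}\rho_\epsilon]\dxdt$ all pass to their natural limits, since $\boldsymbol{\omega}_\epsilon\to\boldsymbol{\omega}$ in $L^\infty(0,T)$, $\rho_\epsilon\bu_\epsilon\to\rho\bu$ in $C_{weak}(0,T;L^{2\beta/(\beta+1)}_{(\mathcal{C})})$, $\bb_\epsilon\to\bb$ strongly in $L^2((0,T)\times\mathcal{C})$ while $\nabla_{\bx}\times\bb_\epsilon\rightharpoonup\nabla_{\bx}\times\bb$ in $L^2$, and $\mathcal{A}[\mathbf{1}_{\mathcal{C}}\rho_\epsilon]\to\mathcal{A}[\mathbf{1}_{\mathcal{C}}\rho]$ in $C$. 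The term $\epsilon\int_0^T\!\int_{\mathcal{C}}\varphi\,\nabla_{\bx}\rho_\epsilon\cdot\nabla_{\bx}\bu_\epsilon\cdot\mathcal{A}[\mathbf{1}_{\mathcal{C}}\rho_\epsilon]\dxdt$, and the one generated by $\epsilon\Delta_{\bx}\rho_\epsilon$ after using the continuity equation to eliminate $\partial_t\mathcal{A}$, vanish thanks to the bounds $\epsilon\|\nabla_{\bx}\rho_\epsilon\|^2_{L^2((0,T)\times\mathcal{C})}\le C$ and \eqref{high-rho}, exactly as in the $\epsilon\to0$ step already carried out above.

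The one genuinely delicate term, and the main obstacle, is the convective one. After using $\eqref{mhd-x-re}_1$ to eliminate the time derivative of $\mathcal{A}[\mathbf{1}_{\mathcal{C}}\rho_\epsilon]$, the convective and time-derivative contributions combine, modulo products in which $\rho_\epsilon\bu_\epsilon$ is paired against $\bu_\epsilon$ or against $\mathcal{A}[\mathbf{1}_{\mathcal{C}}\rho_\epsilon]$ — all of which converge by the strong compactness $\rho_\epsilon\bu_\epsilon\to\rho\bu$ in $L^p(0,T;W^{-1,2}_{(\mathcal{S})})$ together with the weak convergence $\bu_\epsilon\rightharpoonup\bu$ in $L^2(0,T;W^{1,2})$ — into the commutator
\[
\int_0^T\!\!\int_{\mathcal{C}}\varphi\,u_\epsilon^{\,i}\Bigl(\rho_\epsilon\,\mathcal{R}_{ij}[\mathbf{1}_{\mathcal{C}}\rho_\epsilon v_\epsilon^{\,j}]-\rho_\epsilon v_\epsilon^{\,j}\,\mathcal{R}_{ij}[\mathbf{1}_{\mathcal{C}}\rho_\epsilon]\Bigr)\dxdt ,
\]
where one cannot simply multiply weak limits. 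The remedy is the Friedrichs-type commutator lemma for Riesz transforms of Lions (see \cite[Lemma~3.3]{FNP-2001-JMFM}): writing the bracket as $a_\epsilon\mathcal{R}_{ij}[b_\epsilon^{\,j}]-b_\epsilon^{\,j}\mathcal{R}_{ij}[a_\epsilon]$ with $a_\epsilon=\mathbf{1}_{\mathcal{C}}\rho_\epsilon\rightharpoonup\mathbf{1}_{\mathcal{C}}\rho$ in $L^\beta$ and $b_\epsilon^{\,j}=\mathbf{1}_{\mathcal{C}}\rho_\epsilon v_\epsilon^{\,j}\rightharpoonup\mathbf{1}_{\mathcal{C}}\rho v^{\,j}$ in $L^r$ with $\tfrac1\beta+\tfrac1r<1$, the bracket converges weakly in some $L^s$, $s>1$, to $a\,\mathcal{R}_{ij}[b^{\,j}]-b^{\,j}\,\mathcal{R}_{ij}[a]$; coupling this with the compactness of $\rho_\epsilon\bu_\epsilon$ lets one pass to the limit in the product with $u_\epsilon^{\,i}$. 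Performing on the limit side the same integration by parts with $\mathcal{A}[\mathbf{1}_{\mathcal{C}}\rho]$ and collecting all the limits yields the claimed identity. The bookkeeping to watch is that the exponents $\beta>\max\{\gamma,4\}$ and $\gamma>\tfrac32$ make every H\"older pairing above strictly subcritical, so that every ``routine'' term really does converge and only this one commutator requires the compensated-compactness input.
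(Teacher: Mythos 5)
Your proposal is correct and follows essentially the same route as the paper, which proves nothing here beyond citing \cite[Lemma 3.2]{FNP-2001-JMFM}: the standard effective-viscous-flux argument via the test function $\varphi\,\nabla_{\bx}\Delta_{\bx}^{-1}[\mathbf{1}_{\mathcal C}\rho_\epsilon]$ and the Riesz-transform commutator (div-curl) lemma. Your adaptations for the Coriolis, Lorentz and $\epsilon$-regularizing terms are exactly the lower-order modifications needed, and the exponent bookkeeping with $\beta>4$ is right.
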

Since $\rho_\epsilon$, $\bv_\epsilon$, $\rho$, and $\bv$ are regular enough, they fulfill the renormalized continuity equation, we then take $b(\rho) = \rho\log\rho$ in order to obtain
$$
\int_0^\tau \int_{\mathcal C} \rho_\epsilon \mathrm{div}_x \bv_\epsilon \dxdt \leq \int_{\mathcal C} \rho_0\log \rho_0\ddx - \int_{\mathcal C} \rho_\epsilon(\tau,\cdot)\log \rho_\epsilon(\tau,\cdot)\ddx,
$$
and
$$
\int_0^\tau \int_{\mathcal C} \rho \mathrm{div}_x \bv \dxdt = \int_{\mathcal C} \rho_0\log \rho_0\ddx - \int_{\mathcal C} \rho(\tau,\cdot)\log \rho(\tau,\cdot)\ddx.
$$

Let $\Phi_n$ be a sequence of smooth compactly supported positive functions such that $\Phi_n\to 1$ in $L^q$ for some sufficiently large $q$. We have 
\begin{multline*}
\int_{\mathcal C}[\rho(\tau,\cdot)\log \rho(\tau,\cdot) - \rho_\epsilon (\tau,\cdot)\log \rho_\epsilon(\tau,\cdot)]\ddx\geq \int_0^\tau \int_{\mathcal C}(\rho_\epsilon \mathrm{div}_{\bx} \bv_\epsilon - \rho \mathrm{div}_{\bx}\bv )\dxdt \\
 = \int_0^\tau \Phi_n\left(\rho_\epsilon \mathrm{div}_{\bx} \bv_\epsilon - \frac 1{\lambda + 2\mu}(p(\rho_\epsilon) + \delta \rho_\epsilon^\beta)\rho_\epsilon + \frac 1{\lambda + 2\mu}(p(\rho_\epsilon) + \delta \rho_\epsilon^\beta)\rho_\epsilon - \rho \mathrm{div}_{\bx}\bv\right)\dxdt\\
 + \int_0^\tau \int_{\mathcal C} (1-\Phi_n)(\rho_\epsilon \mathrm{div}_{\bx} \bv_\epsilon - \rho \mathrm{div}_{\bx} \bv)\dxdt,
\end{multline*}
and we use the effective viscous flux lemma to pass to a  limit
\begin{multline*}
\int_{\mathcal C} \rho(\tau,\cdot)\log \rho(\tau,\cdot) - \overline{\rho(\tau,\cdot)\log \rho(\tau,\cdot)}\ddx\\
\geq \frac 1{\lambda + 2\mu}\int_0^\tau\int_{\mathcal C} \left(\overline{(p(\rho) - \delta \rho^\beta)\rho} - \overline{(p(\rho) - \delta \rho^\beta)}\rho\right)\Phi_n\dxdt+ O(\|1-\Phi\|_q)\\
\geq 0 + O(\|1-\Phi\|_q),
\end{multline*}
where the last inequality holds due to the monotonicity of the mapping $\rho\mapsto p(\rho) + \delta \rho^\beta$. 
Thus,
$$
\int_{\mathcal C} \rho(\tau,\cdot)\log \rho(\tau,\cdot) - \overline{\rho(\tau,\cdot)\log \rho(\tau,\cdot)}\ddx\geq 0,
$$
and since $\rho\mapsto \rho \log \rho$ is convex, we get $\rho_\epsilon \to \rho$ almost everywhere in $(0,T)\times \mathcal C$.

To sum up, we get the following proposition.
\begin{proposition}\label{prop-delta}
	Suppose that all conditions in Lemma $\ref{existence of approximate}$ are satisfied, and $\beta>\max\{4, \frac{6\gamma}{2\gamma-3}\}$, then there exists let least one weak solution $(\rho, \bu, \bb)$ to the problem
	\begin{equation}\label{mhd-x-re-viscosity-1}
	\left.
	\begin{split}
	\rho_t+ \mathrm{div}_{\bx} (\rho\bv)=0, \hspace{0.1cm} in\hspace{0.1cm} \mathcal{C}&\\
	(\rho\bu)_t+\mathrm{div}_{\bx}(\rho\bv\otimes \bu)+\rho\boldsymbol{\omega}\times \bu+\nabla_{\bx}(p(\rho)+\delta\rho^\beta)&\\=\mathrm{div}_{\bx}S(\nabla_{\bx}\bu) +(\nabla_{\bx}\times\bb)\times\bb,\hspace{0.1cm}in\hspace{0.1cm} \mathcal{C} &\\
	\bb_t+(\bo\times \bb)-(\bo\times \bx+\bxi)\cdot\nabla_{\bx}\bb-\nabla_{\bx}\times(\bu\times\bb)=-\nabla_{\bx}\times(\nabla_{\bx}\times\bb), \quad \mathrm{div}_{\bx}\bb=0, \hspace{0.1cm} in\hspace{0.1cm} \mathcal{C}&\\
	\bu=\boldsymbol{\omega_{\bu}}\times \bx +\boldsymbol{\xi_{\bu}}, \hspace{0.1cm}on \hspace{0.1cm}\partial{\mathcal{C}}&\\
\bb\cdot \bn = 0,\quad (\nabla_\bx\times \bb)\times \bn = 0 \hspace{0.1cm}on \hspace{0.1cm}\partial{\mathcal{C}}&
	\end{split}
	\right\}
	\end{equation} 
	\begin{equation}\label{com-x-re-2}
	\left.
	\begin{split}
	\boldsymbol{I}_c\cdot \frac{\mathrm{d}}{\mathrm{d}t}\bo+\bo\times(\boldsymbol{I}_c\cdot\bo)=-\int_{\partial\mathcal{C}}\bx\times \left(T(\bu, p(\rho)) + \bb\otimes\bb - \frac 12 |\bb|^2\mathbb I\right)\cdot \boldsymbol{n}\ddS&\\
	m_{\mathcal{B}}\boldsymbol{\xi}(t)=-\int_{\mathcal{C}}\rho\bu\ddx&
	\end{split}
	\right\}  \forall t\in(0,\infty).
	\end{equation}

	Furthermore, $\rho\in L^{\beta+1}((0,T)\times\mathcal{C})$, and the equation $\eqref{mhd-x-re-viscosity-1}_1$ holds in the sense of renormalized solutions. The triple $(\rho,\bu,\bb)$ also satisfies the energy inequality 
\begin{multline}\label{regularity-delta}
\int_{\mathcal{S}}\left(\frac{1}{2}\rho|\bu|^2+\frac{a}{\gamma-1}\rho^\gamma+\frac{\delta}{\beta-1}\rho^\beta\right)(t)\ddx+\int_{\mathcal{C}}\frac{1}{2}|\bb|^2(t)\ddx\\
			+\int_0^t\int_{\mathcal{S}}S(\nabla_{\bx}\bu):\nabla_{\bx}\bu\dxdt+\int_0^t\int_{\mathcal{C}}|\nabla_{\bx}\times \bb|^2\dxdt\\
			\leq \int_{\mathcal{S}}\left(\frac{1}{2}\frac{\bq^2}{\rho_0}+\frac{a}{\gamma-1}\rho_0^\gamma+\frac{b}{\beta-1}\rho_0^\beta\right)\ddx+\int_{\mathcal{C}}\frac{1}{2}|\bb_0|^2\ddx.
\end{multline}
for almost all $t\in (0,T)$.
	\end{proposition}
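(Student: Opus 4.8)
The plan is to obtain $(\rho,\bu,\bb)$ as the limit, along a subsequence, of the approximate solutions $(\rho_\epsilon,\bu_\epsilon,\bb_\epsilon)$ furnished by Lemma \ref{existence of approximate}, and to show that this limit is a weak solution of \eqref{mhd-x-re-viscosity-1}--\eqref{com-x-re-2} carrying the genuine artificial pressure $p(\rho)+\delta\rho^\beta$. First I would record the $\epsilon$-uniform bounds: the energy inequality \eqref{energy-inequality-epsilon} yields \eqref{uniform in epsilon}, and the Bogovskii test-function computation \eqref{high-estimate-pressure} upgrades this to the key integrability gain \eqref{high-rho}, i.e. $\|\rho_\epsilon\|_{L^{\gamma+1}((0,T)\times\mathcal C)}+\|\rho_\epsilon\|_{L^{\beta+1}((0,T)\times\mathcal C)}\le C$. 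From these one extracts all the limits displayed after \eqref{high-rho}, in particular $\rho_\epsilon\to\rho$ in $C_{weak}(0,T;L^\beta_{(\mathcal C)})$, $\bu_\epsilon\rightharpoonup\bu$ and $\bb_\epsilon\rightharpoonup\bb$ in $L^2(0,T;H^1_{(\mathcal C)})$, $\bb_\epsilon\to\bb$ strongly in $L^2L^2$, and --- via Aubin--Lions applied with the bounds on $\partial_t(\rho_\epsilon\bu_\epsilon)$ and $\partial_t\bb_\epsilon$ --- the convergence $\rho_\epsilon\bu_\epsilon\to\rho\bu$ in $C_{weak}(0,T;L^{2\beta/(\beta+1)}_{(\mathcal S)})$, hence $\boldsymbol{\omega}_{\bu_{\epsilon}}\to\boldsymbol{\omega}_{\bu}$, $\boldsymbol{\xi}_{\bu_{\epsilon}}\to\boldsymbol{\xi}_{\bu}$ in $L^\infty(0,T)$. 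Since $\epsilon\|\nabla_{\bx}\rho_\epsilon\|_{L^2L^2}^2\le C$, the artificial viscosity contributions $\epsilon\Delta_{\bx}\rho_\epsilon$ and $\epsilon\nabla_{\bx}\rho_\epsilon\cdot\nabla_{\bx}\bu_\epsilon$ disappear in the limit, so passing to the limit in the weak formulations of \eqref{mhd-x-re}--\eqref{com-x-re} produces the limit system with $p(\rho)+\delta\rho^\beta$ replaced by its weak limit $\overline{p(\rho)+\delta\rho^\beta}$, together with the Newton laws \eqref{com-x-re-2}.

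The heart of the proof --- and the step I expect to be the main obstacle --- is to remove this bar, that is, to prove $\rho_\epsilon\to\rho$ almost everywhere in $(0,T)\times\mathcal C$; I would follow the Lions--Feireisl scheme. Step one is the effective viscous flux identity \cite[Lemma 3.2]{FNP-2001-JMFM} recalled above, obtained by testing the momentum equation with $\mathcal B(\rho_\epsilon-\overline m)$ and its limit counterpart, the commutator terms being controlled thanks to the $L^{\beta+1}$ bound \eqref{high-rho} and the weak continuity of $\rho_\epsilon\bu_\epsilon$. Step two uses that $\rho_\epsilon,\bv_\epsilon$ and $\rho,\bv$ are regular enough to satisfy the renormalized continuity equation; testing it with $b(s)=s\log s$ and inserting a cut-off $\Phi_n\to1$ as in the excerpt yields, after applying the effective viscous flux lemma,
\[
\int_{\mathcal C}\big(\rho(\tau,\cdot)\log\rho(\tau,\cdot)-\overline{\rho(\tau,\cdot)\log\rho(\tau,\cdot)}\big)\ddx\ \ge\ \frac{1}{\lambda+2\mu}\int_0^\tau\!\!\int_{\mathcal C}\big(\overline{(p(\rho)-\delta\rho^\beta)\rho}-\overline{(p(\rho)-\delta\rho^\beta)}\,\rho\big)\Phi_n\dxdt+O(\|1-\Phi_n\|_q).
\]
Step three: the right-hand side is nonnegative as $n\to\infty$ by monotonicity of $s\mapsto p(s)+\delta s^\beta$, while the left-hand side is $\le0$ by convexity of $s\mapsto s\log s$ and weak lower semicontinuity; hence both sides vanish and the strict convexity of $s\mapsto s\log s$ away from the origin forces $\rho_\epsilon\to\rho$ a.e. The delicate point is the tracking of integrability exponents in Steps one and two, which is exactly where the hypothesis $\beta>\max\{4,\tfrac{6\gamma}{2\gamma-3}\}$ is needed, so that $\rho\bv\otimes\bu$ and $\rho\log\rho$ lie in the function spaces required by the commutator estimates.

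Once $\rho_\epsilon\to\rho$ a.e. is established, $\overline{p(\rho)+\delta\rho^\beta}=p(\rho)+\delta\rho^\beta$, so $(\rho,\bu,\bb)$ solves \eqref{mhd-x-re-viscosity-1}--\eqref{com-x-re-2}. The bound $\rho\in L^{\beta+1}((0,T)\times\mathcal C)$ follows from \eqref{high-rho} and weak lower semicontinuity of the norm; with $\rho\in L^\infty(0,T;L^\beta_{(\mathcal C)})$, $\beta\ge2$, and $\bv\in L^2(0,T;W^{1,2}_{(\mathcal C)})$ at hand, the DiPerna--Lions regularization argument shows that the continuity equation in \eqref{mhd-x-re-viscosity-1} holds in the renormalized sense. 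Finally, the energy inequality \eqref{regularity-delta} is obtained by passing to the limit in \eqref{energy-inequality-epsilon}: one discards the nonnegative term $\epsilon\int_0^t\!\int_{\mathcal C}(a\gamma\rho_\epsilon^{\gamma-2}+\delta\beta\rho_\epsilon^{\beta-2})|\nabla_{\bx}\rho_\epsilon|^2$, uses weak lower semicontinuity of the convex energy $\tfrac12\rho|\bu|^2+\tfrac{a}{\gamma-1}\rho^\gamma+\tfrac{\delta}{\beta-1}\rho^\beta+\tfrac12|\bb|^2$ and of the dissipation $\int_0^t\!\int S(\nabla_{\bx}\bu):\nabla_{\bx}\bu+\int_0^t\!\int|\nabla_{\bx}\times\bb|^2$, and invokes the convergence of the initial data. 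This completes the argument.
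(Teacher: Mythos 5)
Your proposal is correct and follows essentially the same route as the paper: $\epsilon$-uniform bounds from the energy inequality, the Bogovskii-operator improvement to $L^{\beta+1}$ integrability of the density, extraction of weak limits, vanishing of the artificial viscosity terms, and then the Lions--Feireisl machinery (effective viscous flux identity, renormalized continuity equation with $b(s)=s\log s$ and the cut-off $\Phi_n$, convexity/monotonicity comparison) to obtain strong convergence of the density and identify the pressure, with the energy inequality recovered by weak lower semicontinuity. No substantive differences from the paper's argument.
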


\subsection{Artificial pressure viscosity $\delta \rightarrow 0$}
 %
We prescribe initial data $(\rho_{0,\delta},q_{\delta}, \bb_{0,\delta})$ in the same manner as in \cite[Section 4]{FNP-2001-JMFM}, in particular we assume that
\begin{equation}\label{initial-regular-data}
\begin{split}
\rho_{0,\delta}\rightarrow \rho_0\ \mbox{in } L^\gamma_{(\mathcal{C})},\  \bm{q}_{\delta}\rightarrow \bm{q}\ \mbox{in }L^1_{(\mathcal{C})},\  \bb_{0,\delta}\rightarrow\bb_0\ \mbox{in }L^2_{(\mathcal{S})}\quad \mbox{ as }\delta \to 0. 
\end{split}
\end{equation}
Then we consider the sequence of approximate solutions $(\rho_{\delta},\bb_{\delta}, \bb_{\delta})$ to the system $\eqref{mhd-x-re-viscosity-1}$-$\eqref{com-x-re-2}$  with the initial data $(\rho_{0,\delta},q_{\delta}, \bb_{0,\delta})$, its solvability is established in the previous section.  

Similarly to the Lemma 4.1 in \cite{FNP-2001-JMFM}, we again use Bogovskii's operator to get a higher integrability of the density. 
There exists $\theta>0$, such that 
\begin{equation}\label{high-density}
\begin{split}
\int_0^T\int_{\mathcal{C}}\left(a\rho_{\delta}^{\gamma+\theta}+\delta \rho_{\delta}^{\beta+\theta}\right)\dxdt\leq C,
\end{split}
\end{equation}
where constant $C$ is independent of $\delta$.

By virtue of  Proposition $\ref{prop-delta}$
\begin{equation}\label{limit-delta}
\begin{split}
&\rho_{\delta}\rightarrow \rho \ \mbox{in } C_{weak}(0,T;L^\gamma_{(\mathcal{C})}),\\
&\bu_{\delta}\rightarrow \bu \ \mbox{weakly} \ \mbox{in } L^2(0,T;W^{1,2}_{(\mathcal{S})}),\\
&\bb_{\delta}\rightarrow \bb \ \mbox{weakly}^* \ \mbox{in }L^2(0,T;H^1_{(\mathcal{C})})\cap L^\infty(0,T;L^2_{(\mathcal{C})}),\\
& \mathrm{div}_{\bx}\bb=0   \ \mbox{in } \mathcal{D}'((0,T)\times \mathcal{C}),
\end{split}
\end{equation}
and, $\eqref{high-density}$ gives that 
\begin{equation*}
\begin{split}
&\rho_{\delta}^{\gamma}\rightarrow \overline{\rho^\gamma} \ \mbox{weakly} \ \mbox{in } L^{\frac{\gamma+\theta}{\gamma}}((0,T)\times \mathcal{C}),\\
&\delta \rho^\delta\rightarrow 0  \ \mbox{in } L^1((0,T)\times \mathcal{C}).
\end{split}
\end{equation*}

By means of the momentum equations in $\eqref{mhd-x-re-viscosity-1}$ and $\eqref{regularity-delta}$, we get 
\begin{equation*}
\begin{split}
\rho_{\delta}\bu_{\delta}\rightarrow \rho \bu  \ \mbox{in }  C_{weak}(0,T;L^\frac{2\gamma}{\gamma+1}_{(\mathcal{S})}), 
\end{split}
\end{equation*}
and 
\begin{equation*}
\begin{split}
\boldsymbol{\omega}_{\bu_{\delta}}\rightarrow \boldsymbol{\omega}_{\bu},\hspace{0.1cm} \boldsymbol{\xi}_{\bu_{\delta}}\rightarrow \boldsymbol{\xi}_{\bu}, \ \mbox{in }  L^\infty(0,T).
\end{split}
\end{equation*}
Furthermore, 
\begin{equation*}
\begin{split}
\rho_{\delta}\bv_{\delta}\times\bu_{\delta}\rightarrow \rho \bv \times \bu \ \mbox{in } (C((0,T)\times \mathcal{V}_{(\mathcal{S})}))^*.
\end{split}
\end{equation*}
Similarly, it also holds that 
\begin{equation*}
\begin{split}
\bb_{\delta}\rightarrow \bb \ \mbox{in }  C_{weak}(0,T;L^2_{(\mathcal{C})}).
\end{split}
\end{equation*}

Additionally, the limits $(\rho, \bu, \bb )$ satisfy the initial conditions of $\eqref{initial-regular-data}$ in the weak sense. 
By $\eqref{limit-delta}_2$, $\eqref{limit-delta}_3$, $\eqref{limit-delta}_4$ and the fact $H^1_{(\mathcal{C})}\hookrightarrow L^2_{(\mathcal{C})}$, we derive that 
\begin{equation*}
\begin{split}
&\nabla_{\bx}\times (\bu_{\delta}\times \bb_{\delta})\rightarrow \nabla_{\bx}\times (\bu\times \bb) \ \mbox{in } \mathcal{D}'((0,T)\times \mathcal{C}),\\
&(\nabla_{\bx}\times \bb_{\delta })\times {\bb_{\delta}}\rightarrow (\nabla_{\bx}\times \bb)\times {\bb}
\ \mbox{in } \mathcal{D}'((0,T)\times \mathcal{C}).
\end{split}
\end{equation*}

As a result, we use the above convergences to derive that $(\rho, \bu, \bb)$ satisfies 
\begin{equation*}\label{mhd-x-re-delta}
\left.
\begin{split}
\rho_t+ \mathrm{div}_{\bx} (\rho\bv)=0, \hspace{0.1cm} in\hspace{0.1cm} \mathcal{C}&\\
(\rho\bu)_t+\mathrm{div}_{\bx}(\rho\bv\otimes \bu)+\rho\boldsymbol{\omega}\times \bu+\nabla_{\bx}\overline{p(\rho)} &\\=\mathrm{div}_{\bx}S(\nabla_{\bx}\bu) +(\nabla_{\bx}\times\bb)\times\bb,\hspace{0.1cm}in\hspace{0.1cm} \mathcal{C} &\\
\bb_t+(\bo\times \bb)-(\bo\times \bx+\bxi)\cdot\nabla_{\bx}\bb-\nabla_{\bx}\times(\bu\times\bb)=-\nabla_{\bx}\times(\nabla_{\bx}\times\bb), \quad \mathrm{div}_{\bx}\bb=0, \hspace{0.1cm} in\hspace{0.1cm} \mathcal{C}&\\
\bu=\boldsymbol{\omega_{\bu}}\times \bx +\boldsymbol{\xi_{\bu}} \hspace{0.1cm}on \hspace{0.1cm}\partial{\mathcal{C}}&\\
\bb\cdot \bn = 0,\quad (\nabla_\bx \times \bb)\times \bn = 0 \hspace{0.1cm}on \hspace{0.1cm}\partial{\mathcal{C}}&
\end{split}
\right\}.
\end{equation*}

 The only thing left is to show  $a\rho^\gamma=a \overline{\rho^\gamma}$. We first define a family of smooth concave functions as follows
 $$
 T_k(z) = \left\{
 \begin{array}{r} z\hspace{0.3cm}\mbox{for} \hspace{0.1cm}z\leq k,\\
 2k\hspace{0.3cm} \mbox{for}\hspace{0.1cm} z\geq 3k.
 \end{array}
 \right.
 $$
 Since 
 $$ 
 \sup_{k\geq 1} \left(\lim \sup_{\delta \to 0} \int_Q |T_k(\rho_\delta) - T_k(\rho)|^q\dxdt\right) <\infty,
 $$
 the functions $\rho,\bu$ satisfy the renormalized continuity equation according to \cite[Lemma 3.8]{FN-2009-Birkhauser}. Therefore
$$
\partial_t L_k(\rho_\delta) + \mathrm{div}_{\bx}(L_k(\rho_\delta)\bv_\delta) + T_k(\rho_\delta)\mathrm{div}_{\bx}(\bv_\delta) = 0,
$$
and
$$
\partial_t L_k(\rho) + \mathrm{div}_{\bx}(L_k(\rho)\bv) + T_k(\rho)\mathrm{div}_{\bx}(\bv) = 0,
$$
where
$$L_k(\rho) = \left\{\begin{array}{r}
\rho \log\rho\quad\mbox{for }0\leq \rho< k, \\
\rho\log k + \rho\int_k^\rho\frac{1}{s^2}T_k(s)\ {\rm d}s\quad\mbox{for }\rho\geq  k.
\end{array}
\right.
$$
Thus, we deduce that
\begin{multline*}
\int_{\mathcal C}[L_k(\rho(\tau,\cdot)) - L_k(\rho_\delta(\tau,\cdot))]\ {\rm d}\bx \geq \int_0^\tau\int_{\mathcal C} [T_k(\rho_\delta)\mathrm{div}_{\bx} \bv_\delta - T_k(\rho) \mathrm{div}_{\bx} \bv]\ {\rm d}\bx{\rm d}t\\
 = \int_0^\tau\int_{\mathcal C}\Phi_n \left(T_k(\rho_\delta)\mathrm{div}_\bx \bv_\delta - \frac{1}{\lambda + 2\mu}p(\rho_\delta)T_k(\rho_\delta) + \frac{1}{\lambda + 2\mu}p(\rho_\delta)T_k(\rho_\delta) - \rho\mathrm{div}_\bx\bv\right)\ {\rm d}\bx{\rm d}t \\
  + \int_0^\tau \int_{\mathcal C} (1-\Phi_n)(T_k(\rho_\delta)\mathrm{div}_{\bx} \bv_\delta - T_k(\rho)\mathrm{div}_{\bx} \bv)\ {\rm d}\bx{\rm d}t,
\end{multline*}
and we use the monotonicity of pressure and the effective viscous flux lemma (see for example \cite[Lemma 4.2]{FNP-2001-JMFM}) to deduce that
$$
\int_{\mathcal C}[L_k(\rho(\tau,\cdot)) - L_k(\rho_\delta(\tau,\cdot))]\ddx \geq 0.
$$

We send $k\to \infty$ and the convexity of $\rho\log\rho$ yields
$$
\rho_\delta\to \rho \quad \mbox{almost everywhere in }(0,T)\times \mathcal C.
$$

Eventually, this completes the proof of Theorem $\ref{weak-solution-existence}$.

\section {Weak-strong Uniqueness Property}
In this section, we aim to show that the strong solution established in Theorem $\ref{global-existence-strong-solution}$ coincides with the weak solution constructed in Theorem $\ref{weak-solution-existence}$. 

Firstly, we construct the relative energy functional 
\begin{equation*}\label{relative-entropy-functional}
\begin{split}
\mathcal{E}(\rho, \bu, \bb|r, \bU,\bB ):=\frac{1}{2}\rho|\bu-\bU|^2+\frac{1}{\gamma-1}(p(\rho)-p'(r)(\rho-r)-p(r))+\frac{1}{2}|\bb-\bB|^2, 
\end{split}
\end{equation*}
where $(\rho,\bu, \bb)$ is a weak solution to $\eqref{mhd-x}$-$\eqref{com-x}$ and $(r, \bU,\bB)$ are sufficiently smooth functions.
Then the following lemma is established.
\begin{lemma}
Suppose  that all assumptions in Theorem $\ref{weak-solution-existence}$ are satisfied, and $(\rho, \bu, \boldsymbol{\omega}_{\bu}, \boldsymbol{\xi}_{\bu}, \bb)$ is a weak solution to $\eqref{mhd-x}$-$\eqref{com-x}$. Then the following relative energy inequality
\begin{multline}\label{relative-entropy-inequality}
\left[\int_{\mathcal{S}}\left(\frac{1}{2}\rho|\bu-\bU|^2+\frac{1}{\gamma-1}(p(\rho)-p'(r)(\rho-r)-p(r))\right)(s)\ddx+\int_{\mathcal{C}}\frac{1}{2}|\bb-\bB|^2(s)\ddx\right]_{s=0}^{s=t}\\
+\int_0^t\int_\mathcal{S}(S(\nabla_{\bx}\bu)-S(\nabla_{\bx}\bU)):(\nabla_{\bx}\bu-\nabla_{\bx}\bU)\dxdt\\
+\int_0^t\int_\mathcal{C}(\nabla_{\bx}\times\bb-\nabla_{\bx}\times\bB):(\nabla_{\bx}\times\bb-\nabla_{\bx}\times\bB)\dxdt\\
\leq \int_0^t\int _\mathcal{S}\Big(\rho(\bU_t+\bv\cdot\nabla_{\bx}\bU)\cdot(\bU-\bu)-(p(\rho)-p(r))\mathrm{div}_{\bx}\bV
+\rho\boldsymbol{\omega}_{\bu}\times\bu \cdot \bU\\
+S(\nabla_{\bx}\bU):(\nabla_{\bx}\bU-\nabla_{\bx}\bu)+(r-\rho)\left(\frac{a\gamma}{\gamma-1}r^{\gamma-1}\right)_t+\nabla_{\bx}\left(\frac{a\gamma}{\gamma-1}r^{\gamma-1}\right)\cdot(r\bV-\rho\bv)\\
-((\nabla_{\bx}\times\bb)\times\bb)\cdot\bU\Big)\dxdt+\int_0^t\int_{\mathcal{C}}\Big(\bB_t\cdot({\bB-\bb})+(\boldsymbol{\omega}\times \bb)\cdot\bB+(\boldsymbol{\omega}\times \bx+\boldsymbol{\xi})\cdot\nabla_{\bx}\bb\cdot\bB\\
-(\bu\times \bb)\cdot (\nabla_{\bx}\times\bB)+\nabla_{\bx}\times\bB:(\nabla_{\bx}\times\bB-\nabla_{\bx}\times\bb)\Big)\dxdt,
\end{multline}
holds for almost all $t\in(0,T)$. Here $ r>0, r \in C^\infty{((0,T)\times\mathcal{S})},  \bU\in \mathcal{V}_{\mathcal{S}}, \bB \in C^\infty{((0,T)\times\mathcal{C})}$ and (for simplicity of notation) we assume $\varrho(0,\cdot) = \varrho_0$ and $(\varrho\bu)(0,\cdot) = \bq$.
\end{lemma}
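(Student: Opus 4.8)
The strategy is the standard one for relative-energy (modulated-energy) inequalities in compressible fluid mechanics, adapted to incorporate the magnetic field and the rigid-body coupling. The starting point is the energy inequality \eqref{energy-inequality} satisfied by the weak solution $(\rho,\bu,\bb)$; I would rewrite all three terms of $\mathcal E(\rho,\bu,\bb\,|\,r,\bU,\bB)$ by subtracting off the ``test-function'' contributions. Concretely, I expand
\[
\int_{\mathcal S}\tfrac12\rho|\bu-\bU|^2\ddx = \int_{\mathcal S}\tfrac12\rho|\bu|^2\ddx - \int_{\mathcal S}\rho\bu\cdot\bU\ddx + \int_{\mathcal S}\tfrac12\rho|\bU|^2\ddx,
\]
and similarly for $\tfrac12|\bb-\bB|^2$ and for the pressure potential $H(\rho)-H'(r)(\rho-r)-H(r)$ with $H(\rho)=\frac{a}{\gamma-1}\rho^\gamma$. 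The first term in each line is controlled by the energy inequality; the remaining terms are ``mixed'' or ``pure test-function'' terms whose time evolution I compute by using the weak formulations \eqref{weak-density}, \eqref{weak-momentum}, \eqref{weak-magnetic} with the test functions $\bU$, $\bB$, $r$ (and $\tfrac12|\bU|^2$, $H'(r)$, etc.) respectively. This is legitimate because $\bU\in\mathcal V_{(\mathcal S)}$ is an admissible test field for the momentum equation, $\bB$ (with $\bB\cdot\bn=0$) is admissible for the induction equation, and $r$, $H'(r)$ are admissible scalars for the (renormalized) continuity equation.

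The key steps, in order, would be: (1) Differentiate $\int_{\mathcal S}\rho\bu\cdot\bU\ddx$ in time by pairing the weak momentum equation against $\bm\varphi=\bU$, producing the convective term $\rho\bv\otimes\bu:\nabla_{\bx}\bU$, the Coriolis term $\rho\bo_{\bu}\times\bu\cdot\bU$, the pressure term $p(\rho)\diver_{\bx}\bU$, the viscous term $S(\nabla_{\bx}\bu):\nabla_{\bx}\bU$, and the Lorentz term $((\nabla_{\bx}\times\bb)\times\bb)\cdot\bU$. (2) Differentiate $\int_{\mathcal S}\tfrac12\rho|\bU|^2\ddx$ using the continuity equation tested against $\tfrac12|\bU|^2$, which gives $\rho\partial_t\bU\cdot\bU$ plus $\rho\bv\cdot\nabla_{\bx}\bU\cdot\bU$ terms. (3) Handle the pressure block: $\frac{d}{dt}\int_{\mathcal S}\big(H(\rho)-H'(r)(\rho-r)-H(r)\big)\ddx$ — the $H(\rho)$ part is inside the energy inequality, and the remaining linear-in-$\rho$ and pure-$r$ parts are differentiated by testing continuity against $H'(r)=\frac{a\gamma}{\gamma-1}r^{\gamma-1}$ and its time derivative, yielding the terms $(r-\rho)\partial_t H'(r)$ and $\nabla_{\bx}H'(r)\cdot(r\bV-\rho\bv)$; here I'd use the renormalized continuity equation to justify manipulations with $H'(r)$. (4) For the magnetic block, differentiate $\int_{\mathcal C}\bb\cdot\bB\ddx$ and $\int_{\mathcal C}\tfrac12|\bB|^2\ddx$ using \eqref{weak-magnetic} with $\bm\eta=\bB$, producing the curl-dissipation cross terms and the transport/rotation terms; together with the energy inequality's $\int|\nabla_{\bx}\times\bb|^2$ this assembles the coercive term $\int(\nabla_{\bx}\times\bb-\nabla_{\bx}\times\bB):(\nabla_{\bx}\times\bb-\nabla_{\bx}\times\bB)$ on the left after adding and subtracting $\nabla_{\bx}\times\bB$. (5) Collect everything: the viscous terms combine into $\int(S(\nabla_{\bx}\bu)-S(\nabla_{\bx}\bU)):(\nabla_{\bx}\bu-\nabla_{\bx}\bU)$ on the left plus the residual $S(\nabla_{\bx}\bU):(\nabla_{\bx}\bU-\nabla_{\bx}\bu)$ on the right (using the symmetry relations \eqref{symmetry}), and the rest lands on the right-hand side of \eqref{relative-entropy-inequality}.

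The main obstacle is bookkeeping rather than any deep estimate: one must be careful that the weak formulations are stated with compactly-supported-in-time test functions, so to obtain an identity valid on $[0,t]$ one approximates the indicator $\mathbbm 1_{[0,t]}$ in time (a Lebesgue-point argument), and the initial data terms must be tracked through this limit — this is why the lemma carries the caveat $\varrho(0,\cdot)=\varrho_0$, $(\varrho\bu)(0,\cdot)=\bq$. A second delicate point is that $\bu$ and $\bU$ are only $W^{1,2}_{(\mathcal S)}$ in space, with the rigid-body constraint on $\mathcal B$; I would use \eqref{symmetry} precisely to replace $\nabla_{\bx}\bu$, $\nabla_{\bx}\bU$ inside the viscous stress pairings by gradients of their fluid-part representatives, so that no boundary terms from $\partial\mathcal C$ nor from $\partial\mathcal S$ appear. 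Finally, the convective term $\rho\bv\otimes\bu:\nabla_{\bx}\bU$ and the continuity-generated term $\rho\bv\cdot\nabla_{\bx}\bU\cdot\bU$ must be combined so that the ``bad'' quadratic-in-$\bu$ part reorganizes into $\rho\bv\cdot\nabla_{\bx}\bU\cdot(\bU-\bu)$ appearing on the right of \eqref{relative-entropy-inequality}; checking that the relative velocities $\bv$ and $\bV=\bU-\bo_{\bu}\times\bx-\bxi_{\bu}$ are matched correctly (so that the rigid-motion parts cancel against the Coriolis and inertial terms) is the part I would verify most carefully.
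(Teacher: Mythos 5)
Your plan is correct and coincides with the paper's own argument: the paper tests the weak momentum equation with $\bU$, the weak induction equation with $\bB$, the weak continuity equation with $\tfrac12|\bU|^2$ and with $H'(r)=\tfrac{a\gamma}{\gamma-1}r^{\gamma-1}$, and combines these with the energy inequality \eqref{energy-inequality} and a final integration by parts producing $-p(r)\mathrm{div}_{\bx}\bV$, exactly as you outline. The only cosmetic difference is that the paper does not invoke the renormalized continuity equation for the pressure block (testing against the smooth function $H'(r)$ via \eqref{weak-density} suffices, since $r$ is smooth), but this does not affect the validity of your approach.
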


\begin{proof}
We choose a test function $\varphi=\bU$ in $\eqref{weak-momentum}$ to obtain 
\begin{multline}\label{weak-momentum-1}
\left[\int_\mathcal{S}\rho\bu\cdot\bU(s)\ddx\right]_{s=0}^{s=t}-\int_0^t\int_{\mathcal{S}}\Big(\rho \bu\cdot\bU_t+\rho\bv\times\bu:\nabla_{\bx}\bU-\rho\boldsymbol{\omega_{\bu}}\times\bu\cdot\bU +p(\rho)\mathrm{div}_{\bx}\bU\\
-S(\nabla_{\bx}\bu):\nabla_{\bx}\bU+((\nabla_{\bx}\times\bb)\times\bb)\cdot\bU\Big)\dxdt=0,
\end{multline}
and pick a test function $\eta=\bB$ in $\eqref{weak-magnetic}$ to derive 
	\begin{multline}\label{weak-magnetic-1}
\left[\int_\mathcal{C}\bb\cdot\bB(s)\ddx\right]_{s=0}^{s=t}-\int_0^t\int_{\mathcal{C}}\Big(\bb\cdot\bB_t-(\boldsymbol{\omega}\times \bb)\cdot\bB+(\boldsymbol{\omega}\times \bx+\boldsymbol{\xi})\cdot\nabla_{\bx}\bb\cdot\bB\\
+(\bu\times \bb)\cdot (\nabla_{\bx}\times\bB)-(\nabla_{\bx}\times\bb):(\nabla_{\bx}\times\bB)\Big)\dxdt=0.
\end{multline}		
Furthermore, we select $\phi=\frac{1}{2}\bU^2$ in $\eqref{weak-density}$ to derive that 
	\begin{equation}\label{weak-density-1}
\begin{split}
\left[\frac{1}{2}\int_{\mathcal{S}}\rho\left|\bU\right|^2(s)\ddx\right]_{s=0}^{s=t}-\int_0^t\int_{\mathcal{S}}(\rho\bU\cdot\bU_t+\rho\bv\cdot\nabla_{\bx}\bU\cdot\bU)\dxdt=0
\end{split}       
           \end{equation}		                                                     

We combine $\eqref{energy-inequality}$, $\eqref{weak-momentum-1}$, $\eqref{weak-magnetic-1}$ and $\eqref{weak-density-1}$ to get 
\begin{multline*}
\left[\int_{\mathcal{S}}\left(\frac{1}{2}\rho|\bu-\bU|^2+\frac{1}{\gamma-1}p(\rho)\right)(s)\ddx+\int_{\mathcal{C}}\frac{1}{2}|\bb-\bB|^2(s)\ddx\right]_{s=0}^{s=t}\\
+\int_0^t\int_\mathcal{S}(S(\nabla_{\bx}\bu)-S(\nabla_{\bx}\bU)):(\nabla_{\bx}\bu-\nabla_{\bx}\bU)\dxdt\\
+\int_0^t\int_\mathcal{C}(\nabla_{\bx}\times\bb-\nabla_{\bx}\times\bB):(\nabla_{\bx}\times\bb-\nabla_{\bx}\times\bB)\dxdt\\
\leq \int_0^t\int _\mathcal{S}\Big(\rho(\bU_t+\bv\cdot\nabla_{\bx}\bU)\cdot(\bU-\bu)-p(\rho)\mathrm{div}_{\bx}\bV
+\rho\boldsymbol{\omega}_{\bu}\times\bu \cdot \bU\\
+S(\nabla_{\bx}\bU):(\nabla_{\bx}\bU-\nabla_{\bx}\bu)-((\nabla_{\bx}\times\bb)\times\bb)\cdot\bU\Big)\dxdt\\
+\int_0^t\int_{\mathcal{C}}\Big(\bB_t\cdot({\bB-\bb})+(\boldsymbol{\omega}\times \bb)\cdot\bB-(\boldsymbol{\omega}\times \bx+\boldsymbol{\xi})\cdot\nabla_{\bx}\bb\cdot\bB\\
-(\bu\times \bb)\cdot (\nabla_{\bx}\times\bB)+\nabla_{\bx}\times\bB:(\nabla_{\bx}\times\bB-\nabla_{\bx}\times\bb)\Big)\dxdt.
\end{multline*}

Moreover, straightforward calculation and continuity equation give that 
\begin{equation*}
\begin{split}
\left[\int_{\mathcal{S}}\frac{1}{\gamma-1}\left(p'(r)r-p(r)\right)(s)\ddx\right]_{s=0}^{s=t}=\left[\int_{\mathcal{S}}ar^\gamma(s)\ddx\right]_{s=0}^{s=t}=\int_0^t\int_{\mathcal{S}}r\left(\frac{a\gamma}{\gamma-1}r^{\gamma-1}\right)_t\dxdt,
\end{split}
\end{equation*}
 and 
\begin{multline*}
-\left[\int_{\mathcal{S}}\frac{1}{\gamma-1}p'(r)\rho(s)\ddx\right]_{s=0}^{s=t}=\left[-\int_{\mathcal{S}}\frac{a\gamma}{\gamma-1}r^{\gamma-1}\rho(s)\ddx\right]_{s=0}^{s=t} \\
=\int_0^t\int_{\mathcal{S}}-\rho\left(\frac{a\gamma}{\gamma-1}r^{\gamma-1}\right)_t-\rho\nabla_{\bx}\left(\frac{a\gamma}{\gamma-1}r^{\gamma-1}\right)\cdot\bv\dxdt.
\end{multline*}

Integration by parts leads to 
\begin{equation*}
\begin{split}
\int_{\mathcal{S}}r\nabla_{\bx}\left(\frac{a\gamma}{\gamma-1}r^{\gamma-1}\right)\cdot \bV\ddx=\int_{\mathcal{S}}\nabla_{\bx}p(r)\cdot\bV\ddx=-\int_{\mathcal{S}}p(r)\mathrm{div}_{\bx}\bV,
\end{split}
\end{equation*}
which gives the demanded estimate.
\end{proof}

The proof of Theorem \ref{w.s.strong.thm} follows.
\begin{proof} We consider the relative entropy inequality where $r$ and $\bU$ is the strong solution.
	It follows from $\eqref{relative-entropy-inequality}$ that 
	\begin{multline}\label{relative-entropy-inequality-2}
	\left[\int_{\mathcal{S}}\left(\frac{1}{2}\rho|\bu-\bU|^2+\frac{1}{\gamma-1}(p(\rho)-p'(r)(\rho-r)-p(r))\right)(s)\ddx+\int_{\mathcal{C}}\frac{1}{2}|\bb-\bB|^2(s)\ddx\right]_{s=0}^{s=t}\\
	+\int_0^t\int_\mathcal{S}(S(\nabla_{\bx}\bu)-S(\nabla_{\bx}\bU)):(\nabla_{\bx}\bu-\nabla_{\bx}\bU)\dxdt\\
	+\int_0^t\int_\mathcal{C}(\nabla_{\bx}\times\bb-\nabla_{\bx}\times\bB):(\nabla_{\bx}\times\bb-\nabla_{\bx}\times\bB)\dxdt\\
	\leq \int_0^t\int_\mathcal{S}\rho(\bU_t+\bv\cdot\nabla_{\bx}\bU)\cdot(\bU-\bu)\dxdt-\int_0^t\int _\mathcal{S}(p(\rho)-p(r))\mathrm{div}_{\bx}\bV\dxdt
	+\int_0^t\int _\mathcal{S}\rho\boldsymbol{\omega}_{\bu}\times\bu \cdot \bU\dxdt\\
	+\int_0^t\int_\mathcal{S}S(\nabla_{\bx}\bU):(\nabla_{\bx}\bU-\nabla_{\bx}\bu)\dxdt+\int_0^t\int_\mathcal{S}(r-\rho)\left(\frac{a\gamma}{\gamma-1}r^{\gamma-1}\right)_t\dxdt\\
	+\int_0^t\int_\mathcal{S}\nabla_{\bx}\left(\frac{a\gamma}{\gamma-1}r^{\gamma-1}\right)\cdot(r\bV-\rho\bv)\dxdt\\
	-\int_0^t\int_\mathcal{S}((\nabla_{\bx}\times\bb)\times\bb)\cdot\bU\dxdt
	+\int_0^t\int_{\mathcal{C}}\Big(\bB_t\cdot({\bB-\bb})+(\boldsymbol{\omega}\times \bb)\cdot\bB+(\boldsymbol{\omega}\times \bx+\boldsymbol{\xi})\cdot\nabla_{\bx}\bb\cdot\bB\\
	-(\bu\times \bb)\cdot (\nabla_{\bx}\times\bB)+\nabla_{\bx}\times\bB:(\nabla_{\bx}\times\bB-\nabla_{\bx}\times\bb)\Big)\dxdt=:\sum_{i=1}^8\mathcal{J}_{2,i}.
	\end{multline}
	
	Before we give the specific estimates of $\mathcal{J}_{2,i}$ with $i=1,...,8$, let us introduce essential and residual parts for any function $f$ 
	\begin{equation*}
	\begin{split}
	f=f_{ess}+f_{res}, \hspace{0.2cm}f_{ess}=f{\chi}_{\frac{r_0}{2}\leq \rho\leq 2r_1},\hspace{0.2cm} f_{res}=f(1-{\chi}_{\frac{r_0}{2}\leq \rho\leq 2r_1}), 
\end{split}
\end{equation*}
where $r_0, r_1>0$.
	
Theorem $\ref{global-existence-strong-solution}$ reveals that there exist constants $r_0,r_1\in (0,\infty)$ such that $r_0<r<r_1$. Direct calculation gives that 
	\begin{equation}\label{rho-u-estimate}
\begin{split}
\mathcal{E}(\rho, \bu, \bb|r, \bU,\bB )\geq C(\rho|\bu-\bU|^2+|\rho-r|_{ess}^2+|\rho-r|^\gamma_{res}).
\end{split}
\end{equation}

We divide the following integration into three parts
\begin{equation*}
\begin{split}
&\int_0^t\int_{\mathcal{S}}|\rho-r||\bu-\bU|\dxdt\\
&=\left(\int_0^t\int_{\mathcal{S}\cap\{\frac{r}{2}\leq \rho\leq 2r\}}+\int_0^t\int_{\mathcal{S}\cap\{\rho>2r\}}+\int_0^t\int_{\mathcal{S}\cap\{\rho<\frac{r}{2}\}}\right)|\rho-r||\bu-\bU|\dxdt.
\end{split}
\end{equation*}

Then, direct computation and $\eqref{rho-u-estimate}$ give that 
\begin{multline*}
\int_0^t\int_{\mathcal{S}\cap\{\frac{r}{2}\leq \rho\leq 2r\}}|\rho-r||\bu-\bU|\dxdt
\leq C\int_0^t\int_\mathcal{S}(|\rho-r|_{ess}^2+\rho|\bu-\bU|^2)\dxdt\\
\leq C\int_0^t\int_{\mathcal{S}}\mathcal{E}(\rho,\bu|r,\bU)\dxdt,
\end{multline*}
\begin{multline*}
\int_0^t\int_{\mathcal{S}\cap\{\rho>2r\}}|\rho-r||\bu-\bU|\dxdt
\leq C\int_0^t\int_{\mathcal{S}\cap\{\rho>2r\}}\left(\rho|\bu-\bU|^2+\frac{(\rho-r)^2}{\rho}\right)\dxdt\\
\leq C\int_0^t\int_\mathcal{S}(\rho|\bu-\bU|^2+|\rho-r|^\gamma_{res})\dxdt
\leq C\int_0^t\int_{\mathcal{S}}\mathcal{E}(\rho,\bu|r,\bU)\dxdt,
\end{multline*}
	and 
\begin{multline}\label{less-part-rho}
\int_0^t\int_{\mathcal{S}\cap\{\rho<\frac{r}{2}\}}|\rho-r||\bu-\bU|\dxdt
\leq \epsilon \int_0^t\int_{\mathcal{S}\cap\{\rho<\frac{r}{2}\}}|\bu-\bU|^2+C\int_0^t\int_{\mathcal{S}\cap\{\rho<\frac{r}{2}\}}|\rho-r|^2\dxdt\\
\leq \epsilon \int_0^t\int_\mathcal{S}|\bu-\bU|^2\dxdt+C\int_0^t\int_\mathcal{S}|\rho-r|^\gamma\dxdt\\
\leq \epsilon \int_0^t\int_\mathcal{S}|\bu-\bU|^2\dxdt+C\int_0^t\int_\mathcal{S}\mathcal{E}(\rho,\bu|r,\bU)\dxdt.
\end{multline}

We are now in a position to estimate the first term on the right-hand side of $\eqref{less-part-rho}$.
Recalling the definitions $\bu=\bv+\boldsymbol{\omega_{\bu}}\times\bx+\boldsymbol{\xi}_{\bu}$ and  $\bU=\bV+\boldsymbol{\Omega}_{\bU}\times\bx+\boldsymbol{\Xi}_{\bU}$, it  holds that 
		\begin{equation}\label{bu-bU}
	\begin{split}		
	\int_{\mathcal{S}}|\bu-\bU|^2\dxdt\leq \int_{\mathcal{S}}\left(|\bv-\bV|^2+|(\boldsymbol{\omega}_{\bu}-\boldsymbol{\Omega}_{\bU})\times \bx +(\boldsymbol{\xi}_{\bu}-\boldsymbol{\Xi}_{\bU})|^2\right)\dxdt,
\end{split}
\end{equation}
	and 
	\begin{equation*}
	\begin{split}
	\int_{\mathcal{S}}		S(\nabla_{\bx}\bU-\nabla_{\bx}\bu):(\nabla_{\bx}\bU-\nabla_{\bx}\bu)\dxdt\geq C\int_{\mathcal{S}}|\nabla_{\bx}(\bv-\bV)|^2,
\end{split}
\end{equation*}
which together with  Poincar\'e's inequality yield 		
	\begin{equation}\label{bv-bV}
	\begin{split}
	\int_{\mathcal{S}}|\bv-\bV|^2\ddx\leq C	\int_{\mathcal{S}}		S(\nabla_{\bx}\bU-\nabla_{\bx}\bu):(\nabla_{\bx}\bU-\nabla_{\bx}\bu)\ddx. 
\end{split}	
\end{equation}
	
	Furthermore, the straightforward calculation gives that 
		\begin{equation*}
	\begin{split}
	|\boldsymbol{\omega}_{\bu}-\boldsymbol{\Omega}_{\bU}|^2\sim \int_{\mathcal{S}}|(\boldsymbol{\omega}_{\bu}-\boldsymbol{\Omega}_{\bU})\times \bx|^2\ddx\sim\int_{\mathcal{B}}|(\boldsymbol{\omega}_{\bu}-\boldsymbol{\Omega}_{\bU})\times \bx|^2\ddx, 
	\end{split}	
\end{equation*}
which implies that 
\begin{multline}\label{omega-Omega}
|\boldsymbol{\omega}_{\bu}-\boldsymbol{\Omega}_{\bU}|^2
\leq C\int_{\mathcal{S}}|(\boldsymbol{\omega}_{\bu}-\boldsymbol{\Omega}_{\bU})\times \bx|^2\ddx\\
\leq C\int_{\mathcal{B}}|(\boldsymbol{\omega}_{\bu}-\boldsymbol{\Omega}_{\bU})\times \bx|^2\ddx
\leq C\int_{\mathcal{S}}(\rho|\bu-\bU|^2+|\boldsymbol{\xi}_{\bu}-\boldsymbol{\Xi}_{\bU}|^2)\ddx,
\end{multline}
	and
\begin{multline}\label{omega-Omega-xi-Xi}
\int_{\mathcal{S}}|(\boldsymbol{\omega}_{\bu}-\boldsymbol{\Omega}_{\bU})\times\bx+(\boldsymbol{\xi}_{\bu}-\boldsymbol{\Xi}_{\bU})|^2\dxdt
\leq C\int_{\mathcal{B}}\left(|(\boldsymbol{\omega}_{\bu}-\boldsymbol{\Omega}_{\bU})\times\bx|^2+|\boldsymbol{\xi}_{\bu}-\boldsymbol{\Xi}_{\bU}|^2\right)\dxdt\\
\leq C\int_{\mathcal{S}}\left(\rho|\bu-\bU|^2+|\boldsymbol{\xi}_{\bu}-\boldsymbol{\Xi}_{\bU}|^2\right)\dxdt
\leq C\int_{\mathcal{S}}\left(\mathcal{E}(\rho, \bu|r, \bU)+|\boldsymbol{\xi}_{\bu}-\boldsymbol{\Xi}_{\bU}|^2\right)\dxdt.
\end{multline}

We apply Schwartz's inequality and $\eqref{rho-u-estimate}$ to get that 
\begin{multline}\label{xi-Xi}
\int_{\mathcal{S}}\rho|\boldsymbol{\xi}_{\bu}-\boldsymbol{\Xi}_{\bU}|^2\ddx \leq C|\boldsymbol{\xi}_{\bu}-\boldsymbol{\Xi}_{\bU}|^2
=C\left|\frac{1}{m_{\mathcal{B}}}\int_{\mathcal{C}}\left(\rho\bu-r\bU\right)\ddx\right|^2\\
\leq C\left|\frac{1}{m_{\mathcal{B}}}\int_{\mathcal{C}}\left(\rho(\bu-\bU)+(\rho-r)\bU\right)\ddx\right|^2
\leq C\left|\int_{\mathcal{C}}\sqrt{\rho}\sqrt{\rho}(\bu-\bU)\ddx\right|^2+C\left|\int_{\mathcal{C}}(\rho-r)_{ess}\bU\ddx\right|^2\\
+C\left|\int_{\mathcal{C}\cap{\{\rho>2r\}}}\sqrt{(\rho-r)}\sqrt{(\rho-r)}\bU\ddx\right|^2+C\left|\int_{\mathcal{C}\cap{\{\rho<\frac{r}{2}\}}}\sqrt{(r-\rho)}\sqrt{(r-\rho)}\bU\ddx\right|^2\\
\leq C\int_{\mathcal{S}}\mathcal{E}(\rho,\bu|r,\bU)\ddx.
\end{multline}

The above estimates yield
\begin{multline}\label{rho-bu-r-BU}
\int_0^t\int_{\mathcal{S}}|r-\rho||\bu-\bU|\dxdt\\
\leq C\int_0^t\int_\mathcal{S}\mathcal{E}(\rho,\bu|r, \bU)\dxdt+\epsilon \int_0^t\int_{\mathcal{S}}S(\nabla_{\bx}(\bU-\bu)):(\nabla_{\bx}\bU-\nabla_{\bx}\bu)\dxdt.
\end{multline}

Since $(r, \bU,\bB)$  is a strong solution to the momentum equations, we derive that 
\begin{multline*}
	\mathcal{J}_{2,1}=\int_0^t\int_\mathcal{S}\frac{\rho-r}{r}(r\bU_t+r\bV\cdot\nabla_{\bx}\bU)\cdot(\bU-\bu)\dxdt
	+\int_0^t\int_{\mathcal{S}}p(r)\mathrm{div}_{\bx}(\bU-\bu)\dxdt\\
	-\int_0^t\int_{\mathcal{S}}S(\nabla_{\bx}\bU):(\nabla_{\bx}\bU-\nabla_{\bx}\bu)\dxdt\\
	-\int_0^t\int_{\mathcal{S}}(\bB\otimes\bB-\frac{1}{2}|\bB|^2\mathbb{I}):(\nabla_{\bx}\bU-\nabla_{\bx}\bu)\dxdt-\int_0^t\int_{\mathcal{S}}r\boldsymbol{\Omega}_{\bU}\times\bU\cdot(\bU-\bu)\dxdt\\
	-\int_0^t\int_\mathcal{S}\rho(\bV-\bv)\cdot\nabla_{\bx}\bU\cdot (\bU-\bu)\dxdt
	=:\sum_{i=1}^6	\mathcal{J}_{2,1,i}.
\end{multline*}

It follows from	$\eqref{rho-bu-r-BU}$ that 
\begin{equation*}
	\begin{split}
	\mathcal{J}_{2,1,1}\leq C\int_0^t\int_{\mathcal{S}}\mathcal{E}(\rho, \bu, \bb|r, \bU,\bB )\dxdt+\epsilon \int_0^t\int_{\mathcal{S}}S(\nabla_{\bx}(\bU-\bu)):(\nabla_{\bx}\bU-\nabla_{\bx}\bu)\dxdt.
					\end{split}	
\end{equation*}

H\"older's inequality, $\eqref{omega-Omega-xi-Xi}$, and $\eqref{xi-Xi}$ lead to 	
\begin{multline*}
	\mathcal{J}_{2,1,6}\leq 
	C\int_0^t\int_{\mathcal{S}}(\rho|\bv-\bV|^2+\rho|\bu-\bU|^2)\dxdt\\
	\leq 
	C\int_0^t\int_{\mathcal{S}}\rho|\bv-\bV|^2\dxdt+C\int_0^t\int_{\mathcal{S}}\mathcal{E}(\rho,\bu|r,\bU)\dxdt\\
	\leq C\int_0^t\int_\mathcal{S}\left(\rho|\bu-\bU|^2+\rho|\boldsymbol{\omega}_{\bu}\times\bx-\boldsymbol{\Omega}_{\bU}\times\bx|^2+\rho|\boldsymbol{\xi}_{\bu}-\boldsymbol{\Xi}_{\bU}|^2\right)\dxdt\\
	+C\int_0^t\int_{\mathcal{S}}\mathcal{E}(\rho, \bu, \bb|r, \bU,\bB )\dxdt\leq C\int_0^t\int_{\mathcal{S}}\mathcal{E}(\rho, \bu, \bb|r, \bU,\bB )\dxdt.
\end{multline*}

By $\eqref{rho-bu-r-BU}$, $\eqref{omega-Omega}$, $\eqref{omega-Omega-xi-Xi}$ and $\eqref{xi-Xi}$, we derive that 
\begin{multline*}
	\mathcal{J}_{2,1,5}+\mathcal{J}_{2,3}=\int_0^t\int_{\mathcal{S}}-r\boldsymbol{\Omega}_{\bU}\times \bU\cdot(\bU-\bu)+\rho\boldsymbol{\omega}_{\bu}\times\bu\cdot(\bU-\bu)\dxdt\\
	=\int_0^t\int_{\mathcal{S}}\Big((\rho-r)\boldsymbol{\Omega}_{\bU}\times\bU\cdot(\bU-\bu)+\rho(\boldsymbol{\omega_{\bu}}-\boldsymbol{\Omega_{\bU}})\times \bU\cdot(\bU-\bu)\\
	+\rho\boldsymbol{\omega}_{\bu}\times(\bu-\bU)\cdot(\bU-\bu)\Big)\dxdt\\
 \leq C\int_0^t\int_{\mathcal{S}}\mathcal{E}(\rho,\bu|r,\bU)\dxdt+\epsilon \int_0^t\int_{\mathcal{S}}S(\nabla_{\bx}(\bU-\bu)):(\nabla_{\bx}\bU-\nabla_{\bx}\bu)\dxdt.
\end{multline*}

 $\bB$ satisfies the induction equation and thus
\begin{multline*}
	\mathcal{J}_{2,7}+\mathcal{J}_{2,8}+\mathcal{J}_{2,1,4}=\int_0^t\int_{\mathcal{C}}[(\boldsymbol{\omega}_{\bu}\times\bb)\cdot\bB-(\boldsymbol{\Omega}_{\bU}\times\bB)\cdot(\bB-\bb)]\dxdt\\
	+\int_0^t\int_{\mathcal{C}}[(\boldsymbol{\Omega}_{\bU}\times\bx+\boldsymbol{\Xi}_{\bU})\cdot\nabla_{\bx}\bB\cdot(\bB-\bb)-(\boldsymbol{\omega}_{\bu}\times\bx+\boldsymbol{\xi}_{\bu})\cdot\nabla_{\bx}\bb\cdot\bB]\dxdt\\
	+\int_0^t\int_{\mathcal{C}}\Big(\bU\times\bB)\cdot(\nabla_{\bx}\times\bB-\nabla_{\bx}\times\bb)+(\bU\times\bb)\cdot(\nabla_{\bx}\times\bb)\\
	-(\bu\times\bb)\cdot(\nabla_{\bx}\times\bB)-((\bU-\bu)\times\bB)\cdot(\nabla_{\bx}\times\bB)\Big)\dxdt
	=:\sum_{i=1}^3\mathcal{K}_{i}. 
\end{multline*}
	
 By means of $\eqref{omega-Omega}$, $\eqref{omega-Omega-xi-Xi}$ and $\eqref{xi-Xi}$, we get 
\begin{multline*}
\mathcal{K}_1=\int_0^t\int_{\mathcal{C}}(\boldsymbol{\omega}_{\bu}-\boldsymbol{\Omega}_{\bU})\times(\bb-\bB)\cdot\bB\dxdt
\leq C\left|\boldsymbol{\omega}_{\bu}-\boldsymbol{\Omega}_{\bU}\right|^2+C\int_0^t\int_{\mathcal{C}}|\bb-\bB|^2\dxdt\\
\leq C\int_0^t\int_{\mathcal{S}}\mathcal{E}(\rho, \bu, \bb|r, \bU,\bB )\dxdt,
\end{multline*}
	and
\begin{multline*}
		\mathcal{K}_2=\int_0^t\int_{\mathcal{C}}((\boldsymbol{\Omega}_{\bU}-\boldsymbol{\omega}_{\bu})\times\bx+(\boldsymbol{\Xi}_{\bU}-\boldsymbol{\xi}_{\bu}))\cdot\nabla_{\bx}(\bb-\bB)\cdot\bB\dxdt\\
		=-\int_0^t\int_{\mathcal{C}}((\boldsymbol{\Omega}_{\bU}-\boldsymbol{\omega}_{\bu})\times\bx+(\boldsymbol{\Xi}_{\bU}-\boldsymbol{\xi}_{\bu}))\cdot\nabla_{\bx}\bB\cdot(\bb-\bB)\dxdt\\
		\leq C\int_0^t\int_{\mathcal{C}}(|(\boldsymbol{\Omega}_{\bU}-\boldsymbol{\omega}_{\bu})\times\bx+(\boldsymbol{\Xi}_{\bU}-\boldsymbol{\xi}_{\bu})|^2+|\bb-\bB|^2)\dxdt\\
		\leq C\int_0^t\int_{\mathcal{S}}\mathcal{E}(\rho, \bu, \bb|r, \bU,\bB )\dxdt.
\end{multline*}

By virtue of $\eqref{bu-bU}$, $\eqref{bv-bV}$, $\eqref{omega-Omega-xi-Xi}$ and $\eqref{xi-Xi}$, it implies that 
\begin{multline*}
\mathcal{K}_3=\int_0^t\int_{\mathcal{C}}[\bU\times(\bb-\bB)\cdot(\nabla_{\bx}\times(\bb-\bB))+(\bu-\bU)\times(\bB-\bb)\cdot(\nabla_{\bx}\times\bB)]\dxdt\\
\leq \epsilon \int_0^t\int_{\mathcal{C}}|\nabla_{\bx}\times(\bb-\bB)|^2\dxdt+C\int_0^t\int_{\mathcal{C}}(|\bb-\bB|^2+|\bu-\bU|^2)\dxdt\\
\leq \epsilon \int_0^t\int_{\mathcal{C}}[|\nabla_{\bx}\times(\bb-\bB)|^2+S(\nabla_{\bx}\bU-\nabla_{\bx}\bu):(\nabla_{\bx}\bU-\nabla_{\bx}\bu)]\dxdt\\
+C\int_0^T\int_{\mathcal{S}}\mathcal{E}(\rho, \bu, \bb|r, \bU,\bB )\dxdt.
\end{multline*}

We put above estimates into $\eqref{relative-entropy-inequality-2}$ to obtain that 
\begin{multline}\label{relative-estimates}
\left[\int_{\mathcal{S}}\mathcal{E}(\rho, \bu, \bb|r, \bU,\bB )\ddx\right]_{s=0}^{s=t}+\int_0^t\int_{\mathcal{S}}S(\nabla_{\bx}\bu-\nabla_{\bx}\bU):(\nabla_{\bx}\bu-\nabla_{\bx}\bU)\dxdt\\
+\frac{1}{2}\int_0^t\int_{\mathcal{C}}(\nabla_{\bx}\times\bb-\nabla_{\bx}\times\bB):(\nabla_{\bx}\times\bb-\nabla_{\bx}\times\bB)\dxdt\\
\leq C\int_0^t\int_{\mathcal{S}}\mathcal{E}(\rho, \bu, \bb|r, \bU,\bB )\dxdt\\
+C\underbrace{\int_0^t\int_{\mathcal{S}}\left(p(r)\mathrm{div}_{\bx}(\bU-\bu)-(p(\rho)-p(r))\mathrm{div}_{\bx}\bV+(r-\rho)\frac{p'(r)}{r}r_t+\frac{p'(r)}{r}\nabla_{\bx}r\cdot(r\bV-\rho\bv)\right)\dxdt}_{\mathcal{K}_4}.
\end{multline}

According to $\mathrm{div}_{\bx}(\bU-\bu)=\mathrm{div}_{\bx}(\bV-\bv)$, it holds that 
	\begin{multline*}
\int_0^t\int_{\mathcal{S}}p(r)\mathrm{div}_{\bx}(\bU-\bu)\dxdt=	\int_0^t\int_{\mathcal{S}}p(r)\mathrm{div}_{\bx}(\bV-\bv)\dxdt\\
=-\int_0^t\int_{\mathcal{S}}r\frac{p'(r)}{r}\nabla_{\bx}r\cdot(\bV-\bv)\dxdt, 
\end{multline*}
and we have that 
		\begin{equation*}
	\begin{split}
	\mathcal{K}_4=\int_0^t\int_{\mathcal{S}}\left((r-\rho)\frac{p'(r)}{r}(r_t+\bV\cdot \nabla_{\bx}r)-(p(\rho)-p(r))\mathrm{div}_{\bx}\bV+(\rho-r)(\bV-\bv)\cdot\nabla_{\bx}r\frac{p'(r)}{r}\right)\dxdt.
	 \end{split}	
\end{equation*}

Similarly to $\mathcal{J}_{2,1,1}$, we get 
	\begin{multline}\label{rho-r-bV-bv}
\int_0^t\int_{\mathcal{S}}	(\rho-r)(\bV-\bv)\cdot\nabla_{\bx}r\frac{p'(r)}{r}\dxdt\\
\leq  C\int_0^t\int_{\mathcal{S}}\mathcal{E}(\rho,\bu|r,\bU)\dxdt+\epsilon \int_0^t\int_{\mathcal{S}}S(\nabla_{\bx}(\bu-\bU)):(\nabla_{\bx}\bu-\nabla_{\bx}\bU)\dxdt.
\end{multline}

Since $r$ is a strong solution to the continuity equation, we derive that	
\begin{multline}\label{rest-rho}
	\int_0^t\int_{\mathcal{S}}\left((r-\rho)\frac{p'(r)}{r}(r_t+\bV\cdot \nabla_{\bx}r)-(p(\rho)-p(r))\mathrm{div}_{\bx}\bV\right)\dxdt\\
	=-\int_0^t\int_{\mathcal{S}}\mathrm{div}_{\bx}\bV(p(\rho)-p'(r)(\rho-r)-p(r))\dxdt\leq C\int_0^t\int_{\mathcal{S}}\mathcal{E}(\rho, \bu, \bb|r, \bU,\bB )\dxdt.
	\end{multline}
	
Estimates $\eqref{relative-estimates}$, $\eqref{rho-r-bV-bv}$, and $\eqref{rest-rho}$ together with Gr\"onwall's inequality yield the demanded claim.
\end{proof}

\bibliographystyle{siam}

\end{document}